
\documentclass[12pt, a4paper]{amsart}
\usepackage{times,amsmath,url,amscd}
\usepackage{amssymb}
\usepackage{layout}
\usepackage{amsthm}
\usepackage[all]{xy}

\addtolength\textwidth{1.5cm}
\addtolength\oddsidemargin{-0.75cm}
\addtolength\evensidemargin{-0.75cm}

\numberwithin{equation}{section}

\newtheorem{thm}[equation]{Theorem}
\newtheorem{prop}[equation]{Proposition}

\newtheorem{lem}[equation]{Lemma}
\theoremstyle{definition}
\newtheorem{dfn}[equation]{Definition}

\theoremstyle{plain}
\newtheorem{cor}[equation]{Corollary}

\theoremstyle{remark}
\newtheorem*{remark}{Remark}

\DeclareMathOperator{\Hom}{Hom}

\DeclareMathOperator{\Ker}{Ker}
\DeclareMathOperator{\im}{Im}
\DeclareMathOperator{\Aut}{Aut}
\DeclareMathOperator{\Ext}{Ext}

\DeclareMathOperator{\End}{End}


\newcommand{\N}{\mathbb{N}}
\newcommand{\Z}{\mathbb{Z}}

\newcommand{\Q}{\mathbb{Q}}

\renewcommand{\phi}{\varphi}
\newcommand{\A}{\mathcal{A}}

\newcommand{\K}{\mathcal{K}}


\renewcommand{\geq}{\geqslant}

\let\oldmod\mod
\renewcommand{\mod}{\!\!\oldmod}



\begin{document}

\title[Upper triangular matrices and operations in $K$-theory]{Upper triangular matrices and operations in odd primary connective $K$-theory}

\author{Laura Stanley}

\author{Sarah Whitehouse}
\address{School of Mathematics and Statistics, University of Sheffield, Sheffield S3 7RH, UK.}
\email{s.whitehouse@sheffield.ac.uk}

\begin{abstract}
We prove analogues for odd primes of results of Snaith and Barker-Snaith.
Let $\ell$ denote the $p$-complete connective Adams summand and consider 
the group of left $\ell$-module automorphisms of $\ell\wedge\ell$ in the stable homotopy category 
which induce the identity on mod $p$ homology.
We prove a group isomorphism between this group and a certain group
of infinite invertible upper triangular matrices with entries in the $p$-adic integers. We 
determine information about the matrix corresponding to the automorphism $1\wedge\Psi^q$ of $\ell\wedge\ell$,
where $\Psi^q$ is the Adams operation and $q$ is an integer which generates the $p$-adic units.
\end{abstract}

\keywords{$K$-theory operations, upper triangular technology}
\subjclass[2010]{Primary:   55S25; 
Secondary: 19L64, 
           11B65. 
           }
  
\date{$18^{\text{th}}$ April 2012}
\maketitle

\section{Introduction}
\label{SecIntro}

We prove analogues for odd primes of results of Snaith and Barker-Snaith~\cite{Snaith-utt, BaSn}.
Let $\ell$ denote the Adams summand of the $p$-complete connective $K$-theory spectrum and let
$\Aut_{\text{left-}\ell\text{-mod}}^0(\ell\wedge\ell)$ be the group of left $\ell$-module automorphisms 
 of $\ell\wedge\ell$ in the stable homotopy category which induce the identity on mod $p$ homology.
The first main result is Theorem~\ref{iso}, which gives a group isomorphism between this group and a certain group
of infinite invertible upper triangular matrices with entries in the $p$-adic integers. The second main result is
Theorem~\ref{isomatrix}, which determines information about the matrix corresponding to the automorphism $1\wedge\Psi^q$ of $\ell\wedge\ell$,
where $\Psi^q$ is the Adams operation and $q$ is an integer which generates the $p$-adic units $\Z_p^\times$.
An application is given to the important maps $1\wedge\phi_n$ where $\phi_n=(\Psi^q-1)(\Psi^q-\hat{q})\dots(\Psi^q-\hat{q}^{n-1})$ 
and $\hat{q}=q^{p-1}$.
\medskip

While the general strategy of the proofs is the same as in the $2$-primary case, there
are differences in algebraic and combinatorial details. In places we obtain entirely new information.
Notably, Theorem~\ref{app} gives a new closed formula, involving $q$-binomial coefficients, for
each entry in the matrix corresponding to the map $1\wedge\phi_n$.
\smallskip

This article is organized as follows. Section 2 contains the proof of the 
upper triangular matrix result. Section 3 presents an explicit basis for the
torsion-free part of $\pi_*(\ell\wedge \ell)$. This is used in Section 4 to
obtain information about the matrix corresponding to $1\wedge \Psi^q$. Applications
are discussed in Section 5 and there is a short appendix about the $q$-binomial theorem. 
\smallskip

This paper is based on work in the Ph.D. thesis of the first author~\cite{stanley},
produced under the supervision of the second author.

\section{Upper triangular technology}
\label{SecUTT}

In this section we prove the odd primary analogue of a theorem of Snaith~\cite[Theorem 1.2]{Snaith-utt}; see also~\cite[Theorem $3.1.2$]{Vic'sBook}. This provides an identification between a group of 
$p$-adic infinite upper triangular matrices and certain operations for the Adams summand of complex connective $K$-theory.

Let $ku$ be the $p$-adic connective complex $K$-theory spectrum and let $\ell$ be the $p$-adic Adams summand. 

\begin{dfn}
Let $\End_{\text{left-}\ell\text{-mod}}(\ell\wedge\ell)$ be the ring of left $\ell$-module endomorphisms of $\ell\wedge\ell$ of degree zero
in the stable homotopy category
and let $\Aut_{\text{left-}\ell\text{-mod}}(\ell\wedge\ell)$ be the group of units of this ring. Denote by $\Aut^0_{\text{left-}\ell\text{-mod}}(\ell\wedge\ell)$ the subgroup consisting of those homotopy equivalences which induce the identity map in mod $p$ homology.
\end{dfn}

\begin{dfn}\label{matrix group}
Consider the group (under matrix multiplication) of invertible infinite upper triangular matrices with entries in the $p$-adic integers $\Z_p$.
An element is a matrix $X=(X_{i,j})$ for $i,j\in\N_0$, where $X_{i,j}\in\Z_p$, $X_{i,j}=0$ for $i>j$,
and $X_{i,i}\in \Z_p^\times$.
Let $U_\infty\Z_p$ be the subgroup with all diagonal entries lying in the subgroup $1+p\Z_p$ of $\Z_p^\times$.
\end{dfn}

The main theorem of this section is as follows.

\begin{thm}\label{iso}
There is an isomorphism of groups 
    $$
    \Lambda:U_\infty\Z_p\xrightarrow{\cong}\Aut^0_{\textup{left-}\ell\textup{-mod}}(\ell\wedge \ell).
    $$
\end{thm}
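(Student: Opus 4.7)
The plan is to follow Snaith's argument at the prime $2$, adapted to the Adams summand. The geometric input I will use is a wedge decomposition of $\ell \wedge \ell$ as a left $\ell$-module of the form
$$
\ell \wedge \ell \he \bigvee_{n \geq 0} \Sigma^{2n(p-1)} \ell,
$$
the odd primary analogue of Mahowald's splitting of $ku \wedge ku$. I would fix such a splitting once and for all, pinned down by a choice of $\pi_*(\ell)$-module basis $\{b_n\}$ for the torsion-free part of $\pi_*(\ell\wedge\ell)$ with $b_n\in\pi_{2n(p-1)}(\ell\wedge\ell)$, so that the inclusion of the $n$-th summand represents $b_n$. A degree-zero left $\ell$-module endomorphism $f$ of $\ell\wedge\ell$ is then encoded by its matrix of components $f_{i,j}\from \Sigma^{2j(p-1)}\ell\too\Sigma^{2i(p-1)}\ell$, and by freeness of $\ell$ over itself each $f_{i,j}$ is determined by an element $X_{i,j}\in\pi_{2(j-i)(p-1)}(\ell)$. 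Since $\pi_*(\ell)=\Z_p[v_1]$ with $|v_1|=2(p-1)$, this forces $X_{i,j}=0$ for $i>j$ and $X_{i,j}\in\Z_p$ otherwise, so the matrix $X$ is upper triangular over $\Z_p$.

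Composition of endomorphisms corresponds to matrix multiplication (a direct calculation, the relevant sums being finite by upper triangularity), and since maps into an infinite wedge from a compact object decompose as direct sums while maps out of a wedge decompose as products, every upper triangular $p$-adic matrix arises from a unique endomorphism. To isolate $U_\infty\Z_p$ I would then examine the induced map on mod $p$ homology. The splitting gives a decomposition $H_*(\ell\wedge\ell;\mathbb{F}_p)=\bigoplus_n H_*(\Sigma^{2n(p-1)}\ell;\mathbb{F}_p)$, and an off-diagonal entry $X_{i,j}$ with $i<j$ acts via a positive power of $v_1$, which acts as zero on $H_*(\ell;\mathbb{F}_p)$ (because $H_{2(p-1)}(\ell;\mathbb{F}_p)$ vanishes, $v_1$ has trivial Hurewicz image, and hence so does any positive power). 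Consequently $f_*$ is diagonal on mod $p$ homology, acting on the $n$-th block by $X_{n,n} \mod p$, and the identity-on-mod-$p$-homology condition becomes exactly $X_{n,n}\in 1+p\Z_p$ for every $n$. Any such matrix is invertible (invert the diagonal and solve recursively upward for the strictly upper entries, convergence being automatic by triangularity), so $\Lambda$ will be a bijection; together with multiplicativity this will yield the claimed group isomorphism.

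The hard part will be the construction and bookkeeping of the splitting. The existence of the decomposition $\ell\wedge\ell\he\bigvee_n\Sigma^{2n(p-1)}\ell$ of left $\ell$-modules is the key geometric input and requires knowing enough about $\pi_*(\ell\wedge\ell)$ to produce the generators $b_n$, while all subsequent identifications depend on this choice. Once the splitting is fixed, the matrix-theoretic steps above reduce to routine algebra together with the vanishing of $v_1$ on mod $p$ homology, and the theorem follows.
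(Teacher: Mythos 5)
Your argument collapses at the very first step: the claimed splitting $\ell\wedge\ell\he\bigvee_{n\geq 0}\Sigma^{2n(p-1)}\ell$ does not exist, and it is not what Mahowald's (or, at odd primes, Kane's) splitting says. The correct statement is $\ell\wedge\ell\he\ell\wedge\bigvee_{n\geq0}\K(n)$, where the $\K(n)$ are ($p$-completed) finite spectra, suspensions of Brown--Gitler spectra -- not sphere cells. Two independent facts rule out your version. First, $\pi_*(\ell\wedge\ell)$ contains $p$-torsion (the groups $W_{m,n}$ of Proposition~\ref{p torsion} are nonzero in general), whereas a wedge of suspensions of $\ell$ has torsion-free homotopy; moreover even $\pi_*(\ell\wedge\ell)/\textup{Torsion}$ is not a free $\Z_p[\hat{u}]$-module (in the notation of Section~\ref{Secbasis}, $\hat{u}F_{0,j,k}=pF_{0,j+1,k}$ when $j<\nu_p(k!)$), so no $\pi_*(\ell)$-basis $\{b_n\}$ of the kind you want to fix can exist. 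Second, on cohomology: $H^*(\ell\wedge\ell)\cong\A_p\otimes_B H^*(\ell)$ with $B=E[Q_0,Q_1]$, and $H^*(\ell)$ as a $B$-module is a tensor product of lightning-flash modules (Proposition~\ref{stable l}), not a sum of trivial modules; a wedge of suspensions of $\ell$ would force $H^*(\ell\wedge\ell)$ to be a sum of suspensions of $\A_p\otimes_B\Z/p$, which it is not.

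Because the splitting is wrong, everything you call ``routine algebra'' evaporates: with the true splitting the component maps are left-$\ell$-module maps $\ell\wedge\K(m)\to\ell\wedge\K(n)$, i.e.\ elements of $\pi_0(\ell\wedge\K(n)\wedge D(\K(m)))$, and these groups are not $\pi_{2(m-n)(p-1)}(\ell)$ by inspection. Identifying them -- via the Adams spectral sequence over $B$, the computation of the stable classes $H^*(\K(n))\approxeq\Sigma^{2n(p-1)-\nu_p(n!)}I^{\nu_p(n!)}$, the collapse argument, the identification of which filtration the homology-trivial maps live in, and a coherent choice of generators $\iota_{m,n}$ with $\iota_{m,n}\iota_{k,m}=\iota_{k,n}$ so that the correspondence is multiplicative -- is precisely the substance of the proof, and your proposal contains no substitute for it. The features you do get right (upper triangularity, invertibility from unit diagonal, and the condition $X_{n,n}\in1+p\Z_p$ coming from the identity on mod $p$ homology) are conclusions that survive in the correct setting, but here they rest on a false premise, so the proposal as written does not prove the theorem.
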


The method of proof is essentially that used by Snaith to prove the analogous result for $p=2$, but there are differences of detail.
A basic ingredient in the proof is the following splitting 
    $$
    \ell\wedge\ell\simeq\ell\wedge\bigvee_{n\geqslant0}\K(n).
    $$ 
This goes back to work of Kane~\cite{Kane}. (He claimed the result in the $p$-local setting. In~\cite{cdgm} a gap in his argument was identified    
 and fixed in the $p$-complete situation.) The spectra $\K(n)$ appearing in the splitting are suspensions of Brown-Gitler spectra,
realising a weight filtration of the homology of $\Omega^2S^3\langle 3\rangle_p$. They are $p$-complete finite spectra.
(We remark that in the $2$-primary case studied by
Snaith, the pieces of the splitting should also be suspensions of Brown-Gitler spectra, 
rather than the finite complexes $F_{4n}/F_{4n-1}$.)

The splitting means that it is enough to study left $\ell$-module maps of the form         
$\phi_{m,n}:\ell\wedge\K(m)\rightarrow \ell\wedge\K(n)$ 
for each $m$, $n\geqslant0$.
We use a suitable Adams spectral sequence to identify particular maps 
$\iota_{m,n}:\ell\wedge\K(m)\rightarrow\ell\wedge\K(n)$ 
which are represented by generators of certain groups on the $E_2$ page of the spectral sequence. These 
maps $\iota_{m,n}$ are used to define the required isomorphism.

The work in this section is organised as follows. Firstly, 
we set up the required Adams spectral sequence. Next 
we establish the stable isomorphism class of
the mod $p$ cohomology of $\K(n)$ in order to simplify the $E_2$ term.
We note that the spectral sequence collapses at the $E_2$ term. We then pick generators of certain groups on the $E_2$ page 
to give the maps $\iota_{m,n}$ used in the definition of the map. The spectral sequence is then further analysed to show that
this map is bijective. Finally we show that the choice of the maps $\iota_{m,n}$ can be made in such a way that $\Lambda$ is a group isomorphism.
\medskip

Since we consider left $\ell$-module maps, a map $\phi_{m,n}$ as above is determined by its restriction to $S^0\wedge\K(m)\rightarrow \ell\wedge\K(n)$. This is an element of the homotopy group 
    $$
    [\K(m),\ell\wedge\K(n)]=[S^0,\ell\wedge\K(n)\wedge D(\K(m))]_p.
    $$
Here we are abusing notation slightly by writing $D(\K(m))$ to mean the $p$-completion of the Spanier-Whitehead dual of the finite spectrum $Y$,
where $\K(m)\simeq Y_p$.

We write $\A_p$ for the mod $p$ Steenrod algebra and we let $B=E[Q_0, Q_1]\subset \A_p$ be the exterior 
subalgebra generated by $Q_0=\beta$ and $Q_1$, where $Q_0$ has degree $1$ and $Q_1$ has degree $2p-1$.

All homology and cohomology groups will be with coefficients in $\Z/p$ unless explicitly stated otherwise; we omit the coefficients from
the notation.

We use the Adams spectral sequence with $E_2$ term 
    \begin{equation}
    \label{ASS}
    E_2^{s,t}=\Ext_{\A_p}^{s,t}(H^*(\ell \wedge \K(n) \wedge D(\K(m))),\Z/p)
    \end{equation}
and which converges to
    $$
    E^{s,t}_\infty=[S^0,\ell \wedge \K(n) \wedge D(\K(m))]_{t-s}\otimes\Z_p
        =\pi_{t-s}(\ell \wedge \K(n) \wedge D(\K(m)))\otimes\Z_p.
    $$

\noindent The $E^2$ term simplifies in a standard way as follows.
    \begin{align}
        E_2^{s,t}&=\Ext_{\A_p}^{s,t}(H^*(\ell \wedge \K(n) \wedge D(\K(m))),\Z/p)\notag\\
                &\cong\Ext_{\A_p}^{s,t}(H^*(\ell)\otimes H^*(\K(n))\otimes H^*(D(\K(m))),\Z/p)\notag\\
                &\cong\Ext_{\A_p}^{s,t}((\A_p\otimes_B\Z/p) \otimes H^*(\K(n))\otimes H^*(D(\K(m))),\Z/p)\notag\\
                &\cong\Ext_{\A_p}^{s,t}(\A_p\otimes_B (H^*(\K(n))\otimes H^*(D(\K(m)))),\Z/p)\notag\\
                &\cong\Ext_B^{s,t}(H^*(\K(n)) \otimes H^*(D(\K(m))) ,\Z/p).\notag
    \end{align}
The first two isomorphisms are by the K\"{u}nneth theorem and the fact that $H^*(\ell)\cong\A_p\otimes_B\Z/p$ - see
~\cite[Part III, Proposition $16.6$]{AdamsSH}, respectively.
Next we use the isomorphism of left $\A_p$-modules 
    $$
    (\A_p\otimes_B\Z/p)\otimes M\cong\A_p\otimes_B M
    $$ 
where $\A_p$ acts diagonally on the 
left-hand side by the comultiplication and on the right-hand side by multiplication within $\A_p$ -
see~\cite[Part III, Proof of Proposition $16.1$]{AdamsSH}. Finally we use a standard change of rings isomorphism.

To simplify the $E^2$ term further, we use the theory of stable isomorphism classes; see
~\cite[Part III, Chapter 16]{AdamsSH}.
Stable isomorphism of modules over $B$ will be denoted by $\approxeq$. 
The stable classes we need are expressible in terms of two basic
$B$-modules, the augmentation ideal $I$ of $B$ and the $B$-module $\Sigma$ with a single copy of $\Z/p$ in degree $1$.
We denote the $a$-fold tensor power of $I$ by $I^a$, and similarly for $\Sigma$.

We write $\nu_p$ for the $p$-adic valuation function.

\begin{thm}\label{stable Kn}
There are stable isomorphisms  
    \begin{align*}
    H^*(\K(n))&\approxeq\Sigma^{2n(p-1)-\nu_p(n!)}I^{\nu_p(n!)},\\
    H^*(D(\K(n)))&\approxeq\Sigma^{\nu_p(n!)-2n(p-1)}I^{-\nu_p(n!)}.
    \end{align*}
\end{thm}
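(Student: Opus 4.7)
The plan is to apply the Adams--Margolis classification of stable isomorphism classes over $B = E[Q_0, Q_1]$: for a connective $B$-module of finite type, the stable class is determined by its Margolis homologies $H(-;Q_0)$ and $H(-;Q_1)$. Both claims therefore reduce to Margolis homology calculations. First I would recall the $B$-module structure of $H^*(\K(n))$ from the realisation of $\K(n)$ as the weight-$n$ piece of the filtration on $\Omega^2 S^3\langle 3\rangle_p$ (see~\cite{Kane, cdgm}); this yields an explicit basis on which the Milnor primitives $Q_0$ and $Q_1$ act in a controlled way.

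Since $Q_0$ and $Q_1$ are primitive in $B$, Margolis homology is monoidal. A direct calculation on $I$ itself gives $H(I; Q_0) = \Z/p$ concentrated in degree $1$ and $H(I; Q_1) = \Z/p$ concentrated in degree $2p-1$, so the K\"unneth formula gives, for the target module $T := \Sigma^{2n(p-1) - \nu_p(n!)} I^{\nu_p(n!)}$,
$$H(T; Q_0) = \Z/p \text{ in degree } 2n(p-1), \qquad H(T; Q_1) = \Z/p \text{ in degree } 2n(p-1) + (2p-2)\nu_p(n!).$$
The substantive step is then to verify that $H^*(\K(n))$ has exactly these Margolis homologies, and hence by Adams--Margolis is stably isomorphic to $T$, giving the first assertion. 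The statement for $D(\K(n))$ follows by Spanier--Whitehead duality, which on mod $p$ cohomology corresponds to linear duality with degrees negated: this is a monoidal involution of the stable $B$-module category sending $\Sigma^a I^b$ to $\Sigma^{-a} I^{-b}$, where $I^{-1}$ denotes the stable inverse of $I = \Omega(\Z/p)$, which exists as $B$ is self-injective.

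The main obstacle is the Margolis homology computation for $H^*(\K(n))$. The exponent $\nu_p(n!)$---which by Kummer's theorem counts the number of carries when writing $n$ in base $p$---should emerge from tracking how the $Q_1$-differential interacts with $p$-th power classes in the weight-$n$ piece of the Cohen--Lada--May basis for $H_*(\Omega^2 S^3\langle 3\rangle_p)$. This combinatorial bookkeeping is the technical heart of the argument and is precisely where the odd-primary case diverges from Snaith's calculation at $p=2$.
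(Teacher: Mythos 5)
Your plan has the same overall architecture as the paper's proof: reduce both statements to Margolis homology via the classification of stably invertible $B$-modules, and obtain the statement for $D(\K(n))$ from the one for $\K(n)$ by linear/Spanier--Whitehead duality, using that dualisation inverts the stable class. Your computation of the Margolis homology of the target $\Sigma^{2n(p-1)-\nu_p(n!)}I^{\nu_p(n!)}$ is correct ($H(I;Q_0)=\Z/p$ in degree $1$, $H(I;Q_1)=\Z/p$ in degree $2p-1$, then K\"unneth), and the duality step is essentially the paper's argument (universal coefficients, the fact that the linear dual of an invertible $B$-module represents the inverse stable class, and $H^*(D(\K(n)))\cong H_{-*}(\K(n))$).

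The gap is exactly at what you yourself label the ``technical heart'': you never establish that $H^*(\K(n))$ has Margolis homologies $H(-;Q_0)=\Z/p$ concentrated in degree $2n(p-1)$ and $H(-;Q_1)=\Z/p$ concentrated in degree $2(p-1)(n+\nu_p(n!))$; you only assert that the exponent $\nu_p(n!)$ ``should emerge'' from bookkeeping in the Cohen--Lada--May basis. Without that computation the argument proves nothing, since the whole theorem is equivalent to it. In fact no new computation is needed: these Margolis homology groups are precisely Kane's Lemmas 8:3 and 8:4, and the paper simply quotes them, then applies~\cite[Part III, Theorem 16.3]{AdamsSH}. Relatedly, your appeal to ``the stable class is determined by its Margolis homologies'' for an arbitrary connective $B$-module of finite type is stronger than what is true or needed; the statement actually used is the special case in which both Margolis homologies are a single copy of $\Z/p$, in which case the module is stably invertible and stably isomorphic to the appropriate $\Sigma^a I^b$ --- this is exactly Adams's Theorem 16.3. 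With the citation to Kane's lemmas supplied in place of the unexecuted combinatorial argument, your outline becomes the paper's proof.
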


\begin{proof}
From~\cite[Lemma 8:3, Lemma 8:4]{Kane}, we have the following calculations
of the $Q_0$ and $Q_1$ homology of $H^*(\K(n))$, each of which is
concentrated in a single degree.
    \begin{align*}
        H(H^*(\K(n));Q_0)=\Z/p &\text{\ \  in dimension $2n(p-1)$ and}\\
        H(H^*(\K(n));Q_1)=\Z/p &\text{\ \  in dimension $2(p-1)(\nu_p(n!)+n)$}.
    \end{align*}
The first stable isomorphism then follows from~\cite[Part III, Theorem $16.3 $]{AdamsSH}.

The Universal Coefficient Theorem gives us the $B$-module isomorphism 
    $$
    H^*(\K(n))\cong\Hom^*_{\Z/p}(H_{-*}(\K(n)),\Z/p).
    $$ 
For any invertible $B$-module, its linear dual is its inverse stable isomorphism class, by~\cite[Part III, Lemma 16.3(i)]{AdamsSH},
so it follows from the above that 
    $$
    H_{-*}(\K(n))\approxeq\Sigma^{\nu_p(n!)-2n(p-1)}I^{-\nu_p(n!)}.
    $$ 
Then Spanier-Whitehead duality gives us the $B$-module isomorphism 
    $$
    H^*(D(\K(n)))\cong H_{-*}(\K(n)),
    $$ 
which gives the second stable isomorphism.
\end{proof}

\begin{cor}
In the spectral sequence~(\ref{ASS}) we have for $s>0$,
    $$
    E^{s,t}_2\cong\Ext_B^{s+\nu_p(n!)-\nu_p(m!),t-2(n-m)(p-1)+\nu_p(n!)-\nu_p(m!)}(\Z/p,\Z/p).
    $$
\end{cor}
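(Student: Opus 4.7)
The plan is to combine the simplified form of $E_2^{s,t}$ already established with the stable isomorphisms in Theorem~\ref{stable Kn}, and then translate the resulting $\Ext$ into one with coefficients $\Z/p$ by using the standard interactions between $\Ext_B$, the suspension $\Sigma$ and the augmentation ideal $I$.

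First I would combine the two stable isomorphisms of Theorem~\ref{stable Kn} multiplicatively. Since the stable isomorphism class is multiplicative on tensor products and the invertible $B$-modules $\Sigma$ and $I$ commute past each other, I get
\[
H^*(\K(n))\tp H^*(D(\K(m)))\approxeq \Sigma^{2(n-m)(p-1)+\nu_p(m!)-\nu_p(n!)}\,I^{\nu_p(n!)-\nu_p(m!)}.
\]
Next I would invoke the fact that stably isomorphic $B$-modules have the same $\Ext_B^{s,t}(-,\Z/p)$ for $s>0$ (this is precisely the point of the stable category of $B$-modules, see~\cite[Part III, Chapter 16]{AdamsSH}). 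Combined with the already-derived simplification
\[
E_2^{s,t}\cong\Ext_B^{s,t}(H^*(\K(n))\tp H^*(D(\K(m))),\Z/p),
\]
this yields, for $s>0$,
\[
E_2^{s,t}\cong\Ext_B^{s,t}\bigl(\Sigma^{2(n-m)(p-1)+\nu_p(m!)-\nu_p(n!)}\,I^{\nu_p(n!)-\nu_p(m!)},\Z/p\bigr).
\]

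To finish I would apply two routine identities. The suspension shift gives $\Ext_B^{s,t}(\Sigma^k M,\Z/p)\cong\Ext_B^{s,t-k}(M,\Z/p)$, which absorbs the $\Sigma$-factor into an internal degree shift by $-2(n-m)(p-1)+\nu_p(n!)-\nu_p(m!)$. For the $I$-factor I would use the short exact sequence $0\to I\to B\to\Z/p\to 0$: since $B$ is free as a $B$-module, the connecting homomorphism gives $\Ext_B^{s,t}(I,\Z/p)\cong\Ext_B^{s+1,t}(\Z/p,\Z/p)$ for $s>0$; iterating (or equivalently using invertibility of $I$ in the stable category to handle negative exponents) gives $\Ext_B^{s,t}(I^a,\Z/p)\cong\Ext_B^{s+a,t}(\Z/p,\Z/p)$ for $s>0$. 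Applying this with $a=\nu_p(n!)-\nu_p(m!)$ delivers exactly the stated formula.

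The proof is essentially an accounting exercise; the only point that deserves care is handling the case when $\nu_p(n!)-\nu_p(m!)$ is negative. This is the main (modest) obstacle: I would address it by working throughout in the stable category of $B$-modules, where $I$ is invertible, so that the identity $\Ext_B^{s,t}(I^a,\Z/p)\cong\Ext_B^{s+a,t}(\Z/p,\Z/p)$ remains valid for $a\in\Z$ once $s$ and $s+a$ are both positive; as we are only asserting the isomorphism for $s>0$ (and the final formula is asserting it in the form that both $E_2$ page positions are well defined), no issue arises.
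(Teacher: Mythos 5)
Your proposal is correct and matches the paper's own argument: tensor the two stable isomorphisms from Theorem~\ref{stable Kn}, use stable invariance of $\Ext_B^{s,t}(-,\Z/p)$ for $s>0$, and then apply the standard dimension-shifting isomorphisms for $\Sigma$ and $I$ (the paper cites exactly these, as in~(\ref{dimshift})). Your extra care about negative powers of $I$ via invertibility in the stable module category is a fair elaboration of a point the paper leaves implicit, not a departure from its proof.
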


\begin{proof}
This follows from Theorem~\ref{stable Kn} and 
the standard dimension-shifting isomorphisms of $\Ext$ groups
    \begin{equation}\label{dimshift}
    \begin{aligned}
    \Ext^{s,t}_B(I\otimes M,\Z/p)&\cong\Ext^{s+1,t}_B(M,\Z/p)\\
    \Ext^{s,t}_B(\Sigma\otimes M,\Z/p)&\cong\Ext^{s,t-1}_B(M,\Z/p)
    \end{aligned}
    \end{equation}
for $s>0$ and $M$ a $B$-module.
\end{proof}

\begin{lem}
The spectral sequence~(\ref{ASS}) collapses at the $E_2$ term.
\end{lem}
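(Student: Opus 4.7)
The plan is a sparseness argument, exploiting the fact that the odd-primary Koszul structure makes $E_2$ concentrated in a single residue class of $t-s$ modulo $2(p-1)$, whereas every Adams differential shifts $t-s$ by $-1$.

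The first step is the standard computation
\[
\Ext_B^{*,*}(\Z/p, \Z/p) \cong \Z/p[a_0, a_1],
\]
with $a_0$ of bidegree $(s,t)=(1,1)$ and $a_1$ of bidegree $(1, 2p-1)$, obtained from the minimal resolution of $\Z/p$ over the exterior algebra $B = E[Q_0, Q_1]$. Every nonzero monomial $a_0^i a_1^j$ satisfies $t - s = 2(p-1)j$, a non-negative multiple of $2(p-1)$.

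Combining this with the preceding corollary, for every $s \geq 1$ one has $E_2^{s,t} \neq 0$ only when $t - s \equiv 2(n-m)(p-1) \equiv 0 \pmod{2(p-1)}$, because the bidegree shift supplied by the corollary is itself a multiple of $2(p-1)$ in the $t-s$ direction. For the $s = 0$ row I would argue that the $B$-indecomposables of $H^*(\K(n))$ and of $H^*(D(\K(m)))$ lie in degrees $\equiv 0 \pmod{2(p-1)}$: this can be read off from Theorem~\ref{stable Kn}, using that the generators $Q_0,Q_1$ of $I$ lie in degrees congruent to $1 \pmod{2(p-1)}$ and that the overall suspension shift in $H^*(\K(n)) \approxeq \Sigma^{2n(p-1)-\nu_p(n!)}I^{\nu_p(n!)}$ is itself divisible by $2(p-1)$. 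Hence $E_2^{0,t} = \Hom_B^t(H^*(\K(n)) \otimes H^*(D(\K(m))), \Z/p)$ is concentrated in degrees $\equiv 0 \pmod{2(p-1)}$ as well.

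With the sparseness in place, the Adams differential $d_r\colon E_r^{s,t} \to E_r^{s+r, t+r-1}$ for $r \geq 2$ maps from column $t-s \equiv 0 \pmod{2(p-1)}$ to column $t-s \equiv -1 \pmod{2(p-1)}$. Since $2(p-1) \geq 4 > 1$ for every odd prime, the target is zero, and all higher differentials vanish. The main obstacle is the $s = 0$ row: the stable isomorphism theorem only pins down $H^*(\K(n)) \otimes H^*(D(\K(m)))$ up to free $B$-summands, which are invisible to $\Ext^s$ for $s \geq 1$ but can in principle contribute to $\Hom_B$. Handling this cleanly requires a direct module-theoretic check of the $B$-coinvariants, or alternatively an appeal to Kane's splitting of $\ell\wedge\ell$ (which determines $\pi_*(\ell\wedge\K(n)\wedge D(\K(m)))$ in advance and forces the $s=0$ line into the correct degrees).
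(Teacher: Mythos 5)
Your treatment of the region $s>0$ coincides with the paper's: since $\Ext_B^{*,*}(\Z/p,\Z/p)\cong\Z/p[c,d]$ with $c$ in bidegree $(1,1)$ and $d$ in bidegree $(1,2p-1)$, the corollary places all nonzero groups with $s>0$ in total degrees $t-s\equiv 0 \pmod{2(p-1)}$, so no differential can originate at an $s>0$ term. The genuine gap is exactly where you locate it: the $s=0$ row. Your sparseness claim for $E_2^{0,*}$ does not follow from Theorem~\ref{stable Kn}, because $\Hom_B(-,\Z/p)$ (unlike $\Ext^s_B$ for $s\geq 1$) detects the free $B$-summands by which the actual module $H^*(\K(n))\otimes H^*(D(\K(m)))$ differs from its stable representative, and the generators of those free summands need not lie in degrees divisible by $2(p-1)$ (the $\K(n)$ are suspended Brown--Gitler spectra with cells in many dimensions). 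You acknowledge this, but neither of your proposed remedies is carried out; the first (``a direct module-theoretic check'') is precisely the information the stable classification does not give, and the second (appealing to Kane's splitting to know $\pi_*(\ell\wedge\K(n)\wedge D(\K(m)))$ in advance) would not control the torsion, which is what a differential off the $s=0$ line would affect. So as written, differentials starting on the $s=0$ line are not excluded, and that is the only nontrivial point of the lemma.

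The paper closes exactly this gap by the method of~\cite[Part III, Lemma 17.12]{AdamsSH}, which you could substitute for your final paragraph: the spectral sequence is a module over $\Ext_B^{*,*}(\Z/p,\Z/p)=\Z/p[c,d]$. Take $e\in E_2^{0,t}$ with $t$ odd (for $t$ even there is nothing to prove, by your sparseness of the $s>0$ region) and induct on $r$, assuming $d_i=0$ for $i<r$ so that $E_2\cong E_r$. The element $ce$ lies in a group with $s=1>0$ and odd total degree, hence $ce=0$, so $c\,d_r(e)=d_r(ce)=0$; but away from $s=0$ the page is a shifted copy of $\Z/p[c,d]$, on which multiplication by $c$ is a monomorphism for $s>0$, whence $d_r(e)=0$. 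Combined with your $s>0$ argument this yields the collapse without requiring any degree control on the $s=0$ line.
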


\begin{proof}
Recall that 
$\Ext^{*,*}_B(\Z/p,\Z/p)=\Z/p[c,d]$ where $c\in\Ext_B^{1,1}$ and $d\in\Ext_B^{1,2p-1}$.
It follows from the above that, away from the line $s=0$, all non-zero terms are in even total degrees,
so there are no non-trivial differentials when $s>0$. 

Showing there are no non-trivial differentials when $s=0$ can be done by the method of~\cite[Part III, Lemma 17.12]{AdamsSH}.
Consider an element $e\in E_2^{0,t}$ where $t$ is odd (if $t$ is even, there can be no non-trivial differentials for degree reasons). 
We proceed by induction. Suppose that $d_i=0$ for $i<r$, so $E_2^{s,t}\cong E_r^{s,t}$. 
We have $ce=0$ as this lies in odd total degree, hence $d_r(ce)=0$.
But $cd_r(e)=d_r(ce)$ because this is a spectral sequence of modules over $\Ext_B^{*,*}(\Z/p,\Z/p)$,
so $cd_r(e)=0$. Away from the $s=0$ line, the $E_2=E_r$ page of the spectral sequence reduces to a polynomial algebra 
with $c$ corresponding to one of the generators, so multiplication by $c$ is a monomorphism on $E_r^{s,t}$ for $s>0$. Thus 
$d_r(e)=0$ which completes the induction.
\end{proof}

\begin{dfn}\label{iota}
For $m\geq n$, 
let ${\iota_{m,n}}:\ell\wedge\K(m)\rightarrow \ell\wedge\K(n)$ be a map which is represented in the spectral sequence by a choice of generator of         
    $$
    E_2^{m-n-\nu_p(n!)+\nu_p(m!),m-n-\nu_p(n!)+\nu_p(m!)}.
    $$
Also let $\iota_{m,m}$ be the identity on $\ell\wedge\K(m)$.
\end{dfn}

\begin{prop}\label{lambda bij}
There is a bijective map 
    $$
    \Lambda:U_\infty\Z_p\rightarrow\Aut^0_{\text{left-}\ell\text{-mod}}(\ell\wedge \ell),$$
given by
    $$
    X\mapsto\sum_{m\geqslant n}X_{n,m}\iota_{m,n}:\ell\wedge(\bigvee_{i\geqslant0}\K(i))\rightarrow \ell\wedge(\bigvee_{i\geqslant0}\K(i)).
    $$
\end{prop}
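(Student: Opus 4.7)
The plan is to combine the splitting $\ell \wedge \ell \simeq \ell \wedge \bigvee_{i \geq 0} \K(i)$ of left $\ell$-modules with the (already collapsed) Adams spectral sequence (\ref{ASS}) to decompose any left-$\ell$-module endomorphism of $\ell \wedge \ell$ into matrix form, then to match the $\Aut^0$ condition with the diagonal condition cutting out $U_\infty \Z_p$, and finally to verify invertibility.

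I would first observe that, by compactness of each $\K(m)$, a left-$\ell$-module endomorphism of $\ell \wedge \bigvee_{i \geq 0} \K(i)$ is equivalent data to components $f_{m,n} \colon \ell \wedge \K(m) \to \ell \wedge \K(n)$ (only finitely many non-zero for each fixed $m$), each lying in $\pi_0(\ell \wedge \K(n) \wedge D(\K(m)))$. The key computation is to determine this group: setting $t = s$ in the corollary above and using $\Ext_B^{*,*}(\Z/p, \Z/p) = \Z/p[c,d]$, a non-trivial $c^a d^b$ class in $E_2^{s,s}$ forces $b = m-n \geq 0$, so the abutment vanishes for $m < n$, while for $m \geq n$ the surviving $c$-tower of $\Z/p$'s assembles into a copy of $\Z_p$ with topological generator $\iota_{m,n}$. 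The $s = 0$ line requires a separate check, using the stable isomorphisms of Theorem~\ref{stable Kn}; it contributes $\Z/p$ precisely in the diagonal case $m = n$, with generator the identity. This would identify each $f$ uniquely with a matrix $X = (X_{n,m})$ via $f_{m,n} = X_{n,m} \iota_{m,n}$ (and $X_{n,m} = 0$ for $n > m$). Since $\iota_{m,n}$ for $m > n$ lies in positive Adams filtration and hence induces zero on mod $p$ homology, while $\iota_{m,m} = \id$ induces the identity, the condition $f \in \Aut^0$ should correspond to $X_{m,m} \equiv 1 \pmod p$ for every $m$---exactly the condition defining $U_\infty \Z_p$.

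The hard part will be confirming that $\Lambda(X)$ is an automorphism (and not merely an endomorphism) whenever $X \in U_\infty \Z_p$. The cleanest route I would try is a mod $p$ Whitehead argument: $\ell \wedge \ell$ is connective and $p$-complete, so any self-map inducing the identity on mod $p$ homology should be an equivalence. A more hands-on backup would be to filter by $A_N := \ell \wedge \bigvee_{i \leq N} \K(i)$ (preserved by $\Lambda(X)$ because $\iota_{m,n}$ decreases the $\K$-index), represent the restriction on each $A_N$ as a finite upper triangular matrix with $\Z_p^\times$ diagonal (hence invertible with upper-triangular inverse obtainable by Gaussian elimination), and then patch these inverses compatibly across $N$. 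Either way, injectivity and surjectivity of $\Lambda$ then fall out immediately from the matrix description of the preceding paragraph.
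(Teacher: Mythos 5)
This is essentially the paper's own argument: use Kane's splitting to reduce to components lying in $\pi_0(\ell\wedge\K(n)\wedge D(\K(m)))$, then use the collapsed Adams spectral sequence and the shifted $\Z/p[c,d]$ pattern to see that the homologically trivial components vanish for $m<n$ and sweep out a single $c$-tower $\Z_p\{\iota_{m,n}\}$ for $m\geq n$, with the diagonal condition $X_{m,m}\in 1+p\Z_p$ matching the requirement of inducing the identity on mod $p$ homology (one small adjustment: the spectral sequence computation only controls positive filtration, so the $m<n$ vanishing should be asserted for components inducing zero on homology rather than for the whole abutment, which is all that is needed). Your mod $p$ Whitehead argument (or the finite-subwedge upper-triangular inversion) just fills in the invertibility step that the paper records as easy to check, so the two proofs coincide in substance.
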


\begin{proof}
Firstly, we check that $\Lambda$ does have the correct target.

Note that a left $\ell$-module endomorphism of $\ell\wedge\ell$ which induces the identity
on mod $p$ homology corresponds to a collection of maps $\phi_{m,n}: \ell\wedge \K(m)\rightarrow \ell\wedge \K(n)$,
where $\phi_{m,n}$ induces the zero map for $m\neq n$ and each $\phi_{m,m}$ induces the identity.
In the spectral sequence, elements represented in the $s=0$ line are detected in mod $p$ homology.
So, for $m\neq n$, we are interested in elements of 
$\pi_0(\ell\wedge\K(n)\wedge D(\K(m)))\otimes \Z_p$ 
represented in the spectral sequence in $E_2^{s,s}=E_\infty^{s,s}$
with $s>0$.

We have, $E_2^{s,s}=\Ext^{u,v}_B(\Z/p,\Z/p)$ for $s>0$, where
    \begin{align*}
        u&=s+\nu_p(n!)-\nu_p(m!),\\
        v&=s-2(n-m)(p-1)+\nu_p(n!)-\nu_p(m!).
    \end{align*}
So $v-u=2(m-n)(p-1)$. If $n>m$ then $u>v$ and these groups are all zero.
This explains the range $m\geqslant n$ in the definition of the map $\Lambda$.

It is clear that $\sum_{m\geqslant n}X_{n,m}\iota_{m,n}$  defines a left-$\ell$-module endomorphism of $\ell\wedge\ell$. 
It is easy to check that it is invertible because the coefficient $X_{m,m}$ of each identity map $\iota_{m,m}$ is a unit. For $m\neq n$,
we have chosen $\iota_{m,n}$ represented away from the $s=0$ line, so this map
induces the zero map on mod $p$ homology. Evidently $\iota_{m,m}$ induces the identity map, and since its coefficient
lies in $1+p\Z_p$ the resulting map on $\ell\wedge\ell$ induces the identity on 
mod $p$ homology. Hence $\Lambda$ does take values in $\Aut^0_{\text{left-}\ell\text{-mod}}(\ell\wedge \ell)$.

To show that $\Lambda$ is bijective, we consider non-trivial homotopy classes of left-$\ell$-module maps of the form         
$\phi_{m,n}:\ell\wedge\K(m)\rightarrow \ell\wedge\K(n)$,
where $m\geqslant n$, such that $\phi_{m,n}$
induces the identity on mod $p$ homology if $m=n$ and induces zero if $m>n$.

We start with $m>n$. As we have seen, a map $\phi_{m,n}$ as above is
represented in the spectral sequence in $E_2^{s,s}=E_\infty^{s,s}$ with $s>0$. 
Any non-zero $E_2^{s,s}=\Ext_B^{u,v}(\Z/p,\Z/p)$ group is isomorphic to $\Z/p$ generated by 
    $$
    c^{s+n-m+\nu_p(n!)-\nu_p(m!)}d^{m-n}.
    $$ 
This group is non-zero precisely when $s\geqslant(m-n)-\nu_p(n!)+\nu_p(m!)$.
So the map $\phi_{m,n}$ is represented in 
    $$
    E_\infty^{j+m-n-\nu_p(n!)+\nu_p(m!),j+m-n-\nu_p(n!)+\nu_p(m!)}
    $$ 
for some integer $j\geqslant 0$.

\noindent If
    $$
    E_\infty^{m-n-\nu_p(n!)+\nu_p(m!),m-n-\nu_p(n!)+\nu_p(m!)}=\Z/p\{x\}
    $$ 
then 
    $$
    E_\infty^{j+m-n-\nu_p(n!)+\nu_p(m!),j+m-n-\nu_p(n!)+\nu_p(m!)}=\Z/p\{c^jx\}.
    $$
The ring structure of the spectral sequence yields that multiplication by $c$ in the spectral sequence 
corresponds to multiplication by $p$ on $\pi_0(\ell \wedge \K(n) \wedge D(\K(m)))\otimes\Z_p$, so we see 
that 
    $$
    \phi_{m,n}=\gamma p^j\iota_{m,n}
    $$ 
for some $p$-adic unit $\gamma$ and integer $j\geqslant 0$.

If $m=n$, we need to consider the terms $E_2^{s,s}=E_\infty^{s,s}$ for $s\geqslant 0$. 
Here we see that $E_2^{s,s}=E_\infty^{s,s}=\Z/p\{c^s\}$ for $s\geqslant 0$. Again multiplication
by $c$ corresponds to multiplication by $p$ on $\pi_0(\ell \wedge \K(n) \wedge D(\K(n)))\otimes\Z_p$.
Thus
    $$
    \phi_{m,m}=\gamma p^j\iota_{m,m}
    $$ 
for some $p$-adic unit $\gamma$ and integer $j\geqslant 0$. The map $\phi_{m,m}$ induces the identity on mod $p$ homology if and only if $j=0$ and $\gamma\in 1+p\Z_p$, corresponding to the condition that the diagonal entries of the matrix lie in $1+p\Z_p$.

This shows that, for each collection of maps $\phi_{m,n}$ corresponding to an element of the target $\Aut^0_{\text{left-}\ell\text{-mod}}(\ell\wedge \ell)$, there is a unique choice of $X_{n,m}\in\Z_p$ for $m> n$ and $X_{m,m}\in 1+p\Z_p$, such that the map
is the image under $\Lambda$ of the matrix $X$.
\end{proof}

We now fix choices of the maps $\iota_{m,n}$ in such a way that $\Lambda$ is a group isomorphism.

\begin{prop}\label{extprod}
We can choose the maps $\iota_{m,n}$ as follows.
As before let $\iota_{m,m}$ be the identity map on $\ell\wedge\K(m)$, let $\iota_{m+1,m}$ be as already described, then let     
    $$
    \iota_{m,n}=\iota_{n+1,n}\iota_{n+2,n+1}\cdots\iota_{m,m-1}
    $$ 
for all $m>n+1$. Then 
    \begin{displaymath}
        \iota_{m,n}\iota_{k,l}=\begin{cases}
                    \iota_{k,n} & \textrm{ if } k\geqslant l=m\geqslant n,\\
                    0 & \textrm{ otherwise,}\end{cases}
    \end{displaymath}
and with these choices $\Lambda$ is an isomorphism of groups.
\end{prop}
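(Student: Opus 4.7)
The plan is as follows. First, I must verify that the maps $\iota_{m,n}$ defined by the iterated composition formula satisfy Definition~\ref{iota}, i.e., that they represent generators of the specified $E_2$-groups. Composition of left $\ell$-module maps corresponds to the Yoneda product of their representing classes on the $E_2$ page of the Adams spectral sequence~(\ref{ASS}), compatibly with the sequence of identifications used to simplify the $E_2$ term. Each basic map $\iota_{k+1,k}$ is represented by a generator which, under the corollary to Theorem~\ref{stable Kn}, corresponds to the polynomial generator $d$ of $\Ext_B^{1,2p-1}(\Z/p,\Z/p)$. Iterated Yoneda multiplication yields $d^{m-n}$, a generator of $\Ext_B^{m-n,(m-n)(2p-1)}(\Z/p,\Z/p)$, which after tracking the degree shifts corresponds to a generator of $E_2^{m-n-\nu_p(n!)+\nu_p(m!),\,m-n-\nu_p(n!)+\nu_p(m!)}$. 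Hence the iterated composition is a valid choice.

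Next I would establish the composition identities. Regarding each $\iota_{a,b}$ as an element of $\End(\ell\wedge\bigvee_i\K(i))$ via the wedge decomposition (projection onto $\K(a)$, then $\iota_{a,b}$, then inclusion of $\K(b)$), the composition $\iota_{m,n}\iota_{k,l}$ vanishes when $l\neq m$ because the intervening factor $\ell\wedge\K(l)\hookrightarrow\ell\wedge\bigvee_i\K(i)\twoheadrightarrow\ell\wedge\K(m)$ is zero. When $k\geq m\geq n$, the identity $\iota_{m,n}\iota_{k,m}=\iota_{k,n}$ is immediate by telescoping:
$$
\iota_{m,n}\iota_{k,m}=(\iota_{n+1,n}\cdots\iota_{m,m-1})(\iota_{m+1,m}\cdots\iota_{k,k-1})=\iota_{n+1,n}\cdots\iota_{k,k-1}=\iota_{k,n},
$$
with boundary cases ($m=n$, $m=n+1$, $k=m$, $k=m+1$) handled via $\iota_{m,m}=\id$.

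With the composition rules in hand, the proof that $\Lambda$ is a group homomorphism is a direct calculation:
\begin{align*}
\Lambda(X)\Lambda(Y) &= \sum_{m\geq n}\sum_{k\geq l}X_{n,m}Y_{l,k}\,\iota_{m,n}\iota_{k,l}\\
&= \sum_{k\geq m\geq n}X_{n,m}Y_{m,k}\,\iota_{k,n} = \sum_{k\geq n}(XY)_{n,k}\,\iota_{k,n} = \Lambda(XY).
\end{align*}
Combined with the bijectivity of $\Lambda$ from Proposition~\ref{lambda bij}, this shows $\Lambda$ is an isomorphism of groups.

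The main obstacle I anticipate is the first step, verifying that the iterated composition is detected by a generator (rather than zero or a $p$-multiple) of the correct $E_2$-group. This relies on the compatibility of composition with the Yoneda pairing on the Adams $E_2$ page and on the polynomial structure $\Ext_B^{*,*}(\Z/p,\Z/p)=\Z/p[c,d]$, which ensures that iterated products of polynomial generators remain generators in the ambient Ext algebra.
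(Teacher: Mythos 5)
Your proposal is correct and follows essentially the same route as the paper: both arguments rest on the fact that composition is detected by the pairing of $\Ext_B$-groups, which under the dimension-shifting identifications is multiplication in $\Z/p[c,d]$, so that the one-step generators (each corresponding to $d$) multiply to the generator $d^{m-n}$ of the correct $E_2^{s,s}$ group, making the iterated composite a valid choice of $\iota_{m,n}$; the homomorphism computation is then identical. The only difference is organizational — the paper first shows $\iota_{m,n}\iota_{k,m}=u_{k,m,n}\iota_{k,n}$ up to a $p$-adic unit for arbitrary choices and then re-chooses, while you define the composites outright and verify they are generators, and you make explicit the vanishing when $l\neq m$, which the paper leaves implicit.
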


\begin{proof}
Let $\iota_{m,n}$ be any choice of generator as in Definition~\ref{iota}. To justify that these can be chosen as above,
we need to consider the relationship between the product $\iota_{m,n}\iota_{k,m}$ and $\iota_{k,n}$,
for $k>m>n$. Let $s(m,n)=m-n-\nu_p(n!)+\nu_p(m!)$, then $\iota_{m,n}$ is represented by a generator of     
    $$
    \Ext^{s(m,n),s(m,n)}_B(\Sigma^{2(n-m)(p-1)+\nu_p(m!)-\nu_p(n!)}I^{\nu_p(n!)-\nu_p(m!)},\Z/p),
    $$ 
$\iota_{k,m}$ is represented by a generator of 
    $$
    \Ext^{s(k,m),s(k,m)}_B(\Sigma^{2(m-k)(p-1)+\nu_p(k!)-\nu_p(m!)}I^{\nu_p(m!)-\nu_p(k!)},\Z/p)
    $$ 
and $\iota_{k,n}$ is represented by a generator of 
    $$
    \Ext^{s(k,n),s(k,n)}_B(\Sigma^{2(n-k)(p-1)+\nu_p(k!)-\nu_p(n!)}I^{\nu_p(n!)-\nu_p(k!)},\Z/p).
    $$ 

The product $\iota_{m,n}\iota_{k,m}$ is represented by the product of the representatives under the pairing of $\Ext$ groups     
    $$
    \Ext^{s,s}(\Sigma^aI^b,\Z/p)\otimes \Ext^{s',s'}(\Sigma^{a'}I^{b'},\Z/p)\rightarrow\Ext^{s+s',s+s'}(\Sigma^{a+a'}I^{b+b'},\Z/p)
    $$ 
induced by the isomorphism $\Sigma^aI^b\otimes\Sigma^{a'}I^{b'}\cong\Sigma^{a+a'}I^{b+b'}$. We can identify this pairing using the following 
commutative diagram.
\footnotesize\begin{equation*}
\xymatrix{
    \Ext^{s,s}(\Sigma^aI^b,\Z/p)\otimes\Ext^{s',s'}(\Sigma^{a'}I^{b'},\Z/p) \ar[r] \ar[d]_{\cong} & \Ext^{s+s',s+s'}(\Sigma^{a+a'}I^{b+b'},\Z/p) \\
    \Ext^{s+b,s-a}(\Z/p,\Z/p)\otimes\Ext^{s'+b',s'-a'}(\Z/p,\Z/p) \ar[r] & \Ext^{s+s'+b+b',s+s'-a-a'}(\Z/p,\Z/p) \ar[u]_{\cong} }
\end{equation*}
\normalsize
The bottom pairing is the Yoneda splicing and it is an isomorphism when all the groups are non-zero as any non-zero $\Ext$ group here is a copy of $\Z/p$. The vertical isomorphisms are the dimension-shifting isomorphisms. So the top pairing is an isomorphism whenever the groups are non-zero
and since $s(k,m)+s(m,n)=s(k,n)$ this holds in our case. Hence up to a $p$-adic unit $u_{k,m,n}$ we have     
    $$
    \iota_{m,n}\iota_{k,m}=u_{k,m,n}\iota_{k,n},
    $$ 
and we can choose the maps $\iota_{m,n}$ as stated above.

Now $\Lambda$ is a group isomorphism because
    \begin{align*}
        \Lambda(X)\Lambda(Y)&=\left(\sum_{m\geqslant n}X_{n,m}\iota_{m,n}\right)\left(\sum_{k\geqslant l}Y_{l,k}\iota_{k,l}\right)
                                =\sum_{k\geqslant l=m\geqslant n}X_{n,m}Y_{l,k}\iota_{m,n}\iota_{k,l}\\
                            &=\sum_{k\geqslant l\geqslant n}X_{n,l}Y_{l,k}\iota_{k,n}
                                =\sum_{k\geqslant n}(XY)_{n,k}\iota_{k,n}\\
                            &=\Lambda(XY).\qedhere
\end{align*}
\end{proof}

\noindent Hence we have now proved Theorem~\ref{iso}.

\section{A basis of the torsion-free part of $\pi_*(\ell\wedge\ell)$}
\label{Secbasis}

In this section, we find a basis for the torsion-free part of the homotopy groups $\pi_*(\ell\wedge\ell)$. 
To do this we follow methods introduced by Adams in~\cite{AdamsSH}. We then study some of the properties of 
this basis including how it relates to Kane's splitting. We explore its behaviour with 
relation to the Adams spectral sequence in order to assess the effect of the maps $(\iota_{m,n})_*$. 
This will allow us to compare with the effect of $(1\wedge\Psi^q)_*$ 
and hence, in the next section, to deduce information about the matrix corresponding to $1\wedge\Psi^q$ 
under the isomorphism $\Lambda$.
\smallskip

It would be interesting to compare the basis that we find here with elements 
of the torsion free part of $\pi_*(\ell\wedge\ell)$
studied in~\cite[\S9,10]{br}. We hope to return to this in future work.

\subsection{A basis}
\label{subsec:basis}

We consider the torsion-free part of $\pi_*(\ell\wedge\ell)$ by considering its image in 
$\pi_*(\ell\wedge\ell)\otimes\Q_p=\Q_p[\hat{u},\hat{v}]$,
where $\pi_*(\ell)=\Z_p[\hat{u}]$. 

We fix a choice of $q$ primitive modulo $p^2$, so that $q$ is a topological 
generator of the $p$-adic units $\Z_p^\times$ and we let $\hat{q}=q^{p-1}$.
We also adopt the notation $\rho=2(p-1)$.

The integrality conditions governing the image can be found as in~\cite[Part III, Theorem 17.5]{AdamsSH}
and are as follows.

\begin{prop}\label{subring}
For $f(\hat{u},\hat{v})\in\Q_p[\hat{u},\hat{v}]$ to be in the image 
of $\pi_*(\ell\wedge\ell)$ it is necessary and sufficient for $f$ to satisfy the following two conditions.
\newcounter{itemcounter}
\begin{list}
{(\arabic{itemcounter})}
{\usecounter{itemcounter}\leftmargin=0.5em}
\item $f(kt,lt)\in\Z_p[t]$ for all $k$, $l\in 1+p\Z_p$.
\item $f(\hat{u},\hat{v})$ is in the subring $\Z_p[\frac{\hat{u}}{p},\frac{\hat{v}}{p}]$.\qed
\end{list}
\end{prop}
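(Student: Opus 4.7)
The proof adapts the argument of \cite[Part III, Theorem 17.5]{AdamsSH}, which handles the analogous statement for $\pi_*(ku\wedge ku)$, to the odd-primary Adams summand. I describe the main structural steps without reproducing details.

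First, set up the rationalization. Since $\ell$ is rationally a wedge of suspensions of $H\Q_p$, there is a canonical identification $\pi_*(\ell\wedge\ell)\otimes\Q_p\iso\Q_p[\hat{u},\hat{v}]$, where $\hat{u}$ and $\hat{v}$ are the images of the generator of $\pi_\rho(\ell)$ under the two unit maps $\ell\to\ell\wedge\ell$. The torsion-free quotient of $\pi_*(\ell\wedge\ell)$ embeds in this rationalization, so the task is to identify its image.

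For necessity of (1), use the Adams operations. Each $\Psi^k$ with $k\in\Z_p^\times$ acts on $\ell$, and rationally it satisfies $\Psi^k(\hat{u})=k^{p-1}\hat{u}$; since raising to the $(p-1)$-th power is an automorphism of the pro-$p$-group $1+p\Z_p$, the family of operations on $\ell$ is effectively parametrised by $k\in 1+p\Z_p$. For any such $k,l$, the composite
$$
\ell\wedge\ell\xrightarrow{\Psi^k\wedge\Psi^l}\ell\wedge\ell\xrightarrow{\mu}\ell,
$$
where $\mu$ is ring multiplication, rationally sends $f(\hat{u},\hat{v})$ to $f(k\hat{u},l\hat{u})\in\Z_p[\hat{u}]$, and setting $t=\hat{u}$ yields condition (1). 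Necessity of (2) is a denominator bound that can be read off the Hurewicz image in $H_*(\ell\wedge\ell;\Z_{(p)})$, or equivalently from the Adams filtration: each power of $\hat{u}$ and $\hat{v}$ contributes at most one factor of $p$ to the denominator.

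For sufficiency, exhibit a $\Z_p$-basis of the subring cut out by (1) and (2) and verify that each basis element is realised by a class in $\pi_*(\ell\wedge\ell)$. Suitable basis elements are $p$-adic or $q$-binomial expressions in $\hat{u}$ and $\hat{v}$, in parallel with Adams' choice at $p=2$; an explicit construction of these realising classes is precisely the subject of Section~\ref{Secbasis}. The main obstacle is this sufficiency step: producing enough realisable classes to span the entire conditional subring is what forces the combinatorial work with $q$-binomial coefficients that occupies the rest of the paper.
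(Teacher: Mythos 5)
Your necessity arguments are in the spirit of the Adams-style proof that the paper invokes (the paper itself only cites the method of \cite[Part III, Theorem 17.5]{AdamsSH} and gives no details): composing with $\mu\circ(\Psi^k\wedge\Psi^l)$ and landing in $\pi_*(\ell)=\Z_p[\hat{u}]$ does give condition (1), with the scalars ranging over exactly $1+p\Z_p$ because $k\mapsto k^{p-1}$ maps $\Z_p^\times$ onto $1+p\Z_p$ (note the small slip: the composite sends $f(\hat{u},\hat{v})$ to $f(k^{p-1}\hat{u},l^{p-1}\hat{u})$, not $f(k\hat{u},l\hat{u})$), and condition (2) is indeed a filtration/Hurewicz bound, though you assert it rather than prove it --- one needs the analogue of \cite[Part III, Prop.\ 17.2]{AdamsSH} (Hurewicz injectivity, cf.\ Proposition~\ref{p torsion}) or the observation that $\hat{u}$ and $\hat{v}$ have Adams filtration exactly one, so denominators are bounded by $p^{i+j}$ on $\hat{u}^i\hat{v}^j$.

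The genuine gap is the sufficiency direction, which you describe but do not prove, and your pointer to Section~\ref{Secbasis} is circular. In the paper the logical order is the reverse of what you suggest: Theorem~\ref{basis} \emph{uses} Proposition~\ref{subring} as the characterisation of the image in order to show that the polynomials $F_{i,j,k}$ form a basis of $\pi_*(\ell\wedge\ell)/\textup{Torsion}$; Section~\ref{Secbasis} nowhere constructs, independently of Proposition~\ref{subring}, homotopy classes realising the $c_k$ (or $F_{i,j,k}$), so deferring sufficiency to that section assumes the proposition you are trying to prove. What is actually needed is the substance of Adams' argument: either an independent construction of classes in $\pi_{\rho n}(\ell\wedge\ell)$ realising a spanning set of the conditional lattice, or (as in \cite[Part III, Theorem 17.5]{AdamsSH}) a comparison of lattices degree by degree --- the image is contained in the lattice cut out by (1) and (2), and one shows equality by computing the index of each against a fixed reference lattice, using the collapsing Adams spectral sequence (whose $E_\infty$-term, a shifted copy of $\Z/p[c,d]$ with $c$, $d$ acting as $p$, $\hat{u}$) to determine the filtration profile, hence the size, of $\pi_{\rho n}(\ell\wedge\ell)/\textup{Torsion}$. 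Without this counting or construction step the ``sufficient'' half of the proposition is not established.
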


We begin with the following polynomials. They have been chosen by starting from the basis elements for
$L_0(l)$ given in~\cite[Proposition 4.2]{CCW2} and multiplying each by a suitable power of $\hat{u}$
and a suitable power of $p$ to bring it into $\Z_p[\frac{\hat{u}}{p},\frac{\hat{v}}{p}]$.

\begin{dfn}\label{fk}
Define 
    \begin{align*}
    c_{k}&=\prod_{i=0}^{k-1}\frac{\hat{v}-\hat{q}^{i}\hat{u}}{\hat{q}^k-\hat{q}^{i}},\\
    f_{k}&=p^{\nu_p(k!)}c_{k}.
    \end{align*}
\end{dfn}

\noindent It is easy to check that the elements $f_{k}$ lie in $\Z_p[\frac{\hat{u}}{p},\frac{\hat{v}}{p}]$ for all $k\in\N_0$, using
that $\nu_p\left(\prod_{i=0}^{k-1}(\hat{q}^k-\hat{q}^{i})\right)=\nu_p(k!)+k$.

\begin{dfn}\label{basis elements}
Let
    $$
    F_{i,j,k}=\hat{u}^i\left(\frac{\hat{u}}{p}\right)^jf_{k}.
    $$
\end{dfn}   
    
\noindent Now the method of Adams leads to the analogue of~\cite[Part III, Proposition 17.6]{AdamsSH}.

\begin{thm}\label{basis}
\begin{list}
{(\arabic{itemcounter})}
{\usecounter{itemcounter}\leftmargin=0.5em}
\item The intersection of the subring satisfying condition (1) of Proposition~\ref{subring} with $\Q_p[\hat{u},\hat{v}]$ is free on the $\Z_p[\hat{u}]$-basis $\{c_{k}\,|\,k\geqslant 0\}$.
\item A $\Z_p$-basis for $\pi_*(\ell\wedge\ell)/\textup{Torsion}$ is given by the polynomials $F_{i,j,k}$,
where $k\geqslant 0$, $0\leqslant j\leqslant \nu_p(k!)$ with $i=0$ if $j<\nu_p(k!)$ and $i\geqslant0$ if $j=\nu_p(k!)$.
\end{list}
\end{thm}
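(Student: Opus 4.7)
The plan is to follow the strategy of Adams~\cite[Part III, Proposition 17.6]{AdamsSH} for the $2$-primary case, substituting the odd-primary ingredients where needed.

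For part~(1), three items must be checked: each $c_k$ lies in the ring cut out by condition~(1) of Proposition~\ref{subring}; the $c_k$ are $\Z_p[\hat u]$-linearly independent; and they span. Since $\hat q$ topologically generates $1+p\Z_p$, any pair $(\alpha, \beta) \in (1+p\Z_p)^2$ can be written $(\hat q^s, \hat q^{s'})$ with $s, s' \in \Z_p$. Using the standard identity $\nu_p(\hat q^n - 1) = 1 + \nu_p(n)$, one finds
$$
\nu_p\Bigl(\prod_{i=0}^{k-1}(\hat q^{s'} - \hat q^{i+s})\Bigr) = k + \nu_p(k!) + \nu_p\binom{s'-s}{k},
$$
which exceeds the denominator valuation $k + \nu_p(k!)$ since $\binom{s'-s}{k} \in \Z_p$; this gives $c_k(\alpha t, \beta t) \in \Z_p[t]$. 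For the basis property I exploit the interpolation identities
$$
c_j(\hat u, \hat q^k \hat u) = \hat u^j \begin{bmatrix} k \\ j \end{bmatrix}_{\hat q} \text{ if } k \geq j, \qquad c_j(\hat u, \hat q^k \hat u) = 0 \text{ if } k < j,
$$
where $\begin{bmatrix}k\\j\end{bmatrix}_{\hat q} \in \Z_p$ is the Gaussian binomial coefficient at $\hat q$. Substituting $\hat v = \hat q^k \hat u$ in $f = \sum g_j(\hat u) c_j$ yields a unitriangular linear system in $g_0, g_1, \ldots$, solvable recursively over $\Q_p$; the $\Z_p$-integrality of each $g_k$ then follows by induction, substituting $\hat u = t$ (the case $\alpha = 1$) and using condition~(1) applied to $(\alpha, \beta) = (1, \hat q^k)$ to conclude $t^k g_k(t) \in \Z_p[t]$, which forces $g_k \in \Z_p[t]$.

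For part~(2), given $f$ satisfying both conditions, write $f = \sum g_k(\hat u) c_k$ with $g_k \in \Z_p[\hat u]$ from part~(1), and induct downward on the maximum $\hat v$-degree $N$ of $f$. Since the coefficient of $\hat v^N$ in $c_N$ is a $p$-adic unit times $p^{-N-\nu_p(N!)}$, while the coefficient of $\hat v^N$ in any element of $\Z_p[\hat u/p, \hat v/p]$ lies in $p^{-N}\Z_p[\hat u/p]$, condition~(2) forces
$$
g_N(\hat u) \in \Z_p[\hat u] \cap p^{\nu_p(N!)}\Z_p[\hat u/p].
$$
A $\Z_p$-basis of this intersection is
$$
\{p^{\nu_p(N!) - j} \hat u^j : 0 \leq j \leq \nu_p(N!)\} \cup \{\hat u^{\nu_p(N!) + i} : i \geq 1\},
$$
and multiplying these generators by $c_N = f_N/p^{\nu_p(N!)}$ produces exactly the elements $F_{i,j,N}$ in the stated range. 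Each $F_{i,j,N}$ lies in $\Z_p[\hat u/p, \hat v/p]$ because $f_N$ does (by expanding $\prod_i(\hat v - \hat q^i \hat u)$ via the $q$-binomial theorem and dividing by $p^N$), so subtracting $g_N c_N$ from $f$ lowers the $\hat v$-degree while remaining in the target, and the induction continues.

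The main technical obstacle is the $p$-adic valuation calculation in part~(1) --- namely, identifying $\nu_p\bigl(\prod_{i=0}^{k-1}(\hat q^{s'} - \hat q^{i+s})\bigr)$ as $k + \nu_p(k!) + \nu_p\binom{s'-s}{k}$ using the explicit $\nu_p$-formula for $\hat q^n - 1$. Once that is in hand, the rest proceeds by the interpolation identities and the triangular structure of $\{c_k\}$ in $\hat v$-degree.
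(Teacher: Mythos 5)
Your proposal is correct and takes essentially the same route as the paper's proof: part (1) via the substitutions $\hat{v}=\hat{q}^k\hat{u}$ and the triangularity of the $c_k$ in $\hat{v}$-degree, and part (2) by combining the bound from part (1) with the constraint that the top $\hat{v}$-coefficient of an element of $\Z_p[\frac{\hat{u}}{p},\frac{\hat{v}}{p}]$ can be divided by at most $p^{\text{degree}}$, which pins down exactly the elements $F_{i,j,k}$. The only (harmless) difference is that you verify directly, via the valuation identity $\nu_p\bigl(\prod_{i=0}^{k-1}(\hat{q}^{s'}-\hat{q}^{s+i})\bigr)=k+\nu_p(k!)+\nu_p\binom{s'-s}{k}$, that the $c_k$ satisfy condition (1), where the paper cites~\cite[Proposition 4.2]{CCW2}.
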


\begin{proof}
Firstly, it follows from~\cite[Proposition 4.2]{CCW2} that the elements $c_{k}$ satisfy 
Proposition~\ref{subring} condition $(1)$, but will not do so if divided by
more $p$'s. It follows that $\left(\frac{\hat{u}}{p}\right)^{\nu_p(k!)}f_{k}$ satisfies
this condition, but $\left(\frac{\hat{u}}{p}\right)^{\nu_p(k!)+1}f_{k}$ does not.

To prove part (1), note that the $c_{k}$ are clearly linearly independent. Consider a polynomial $f(u,v)\in\Q_p[\hat{u},\hat{v}]$ 
satisfying condition (1) of Proposition~\ref{subring} and suppose that  $f$ is homogeneous of degree $\rho n$.
We can write $f$ as 
    $$
    f(\hat{u},\hat{v})=\lambda_0\hat{u}^n+\lambda_1\hat{u}^{n-1}c_{1}+\lambda_2\hat{u}^{n-2}c_{2}+\cdots.
    $$ 
Assume as an inductive hypothesis that $\lambda_0,\lambda_1,\ldots,\lambda_{s-1}$ lie in $\Z_p$. Let the sum of the remaining terms be         
    $$
    g(\hat{u},\hat{v})=\lambda_s\hat{u}^{n-s}c_{s}+\lambda_{s+1}\hat{u}^{n-s-1}c_{s+1}+\cdots.
    $$ 
This sum must also satisfy condition (1) of Proposition~\ref{subring}. Thus    
$g(t,\hat{q}^st)=\lambda_s t^n\in\Z_p[t]$ 
and hence $\lambda_s\in\Z_p$. The initial case for $\lambda_0$ works in the same way and this completes the induction.
Thus we can write  $f$ as a $\Z_p[\hat{u}]$-linear combination of the $c_{k}$s. 

To prove part (2), write
    $$
    n_k:=\text{ numerator of }c_{k}=\prod_{i=0}^{k-1}(\hat{v}-\hat{q}^{i}\hat{u}).
    $$ 
In degree $\rho k$ there are $k+1$ $\Q_p$-basis elements 
    $$
    n_k,\hat{u}n_{k-1},\hat{u}^2n_{k-2},\ldots,\hat{u}^k.
    $$ 
In order to produce the elements $F_{i,j,k}$ we divided each of the above elements by the highest power of $p$ which leaves it satisfying both conditions (1) and (2) of Proposition~\ref{subring}. For the element $\hat{u}^in_s$, this is $\min\{p^{s+\nu_p(s!)},p^{s+i}\}$. Now consider an element $f(u,v)\in\Q_p[\hat{u},\hat{v}]$, homogeneous of degree $\rho k$, which satisfies conditions (1) and (2) of Proposition~\ref{subring}. We can write $f$ as     
    $$
    f(\hat{u},\hat{v})=\frac{\lambda_0}{p^{a_0}}\hat{u}^k+\frac{\lambda_1}{p^{a_1}}\hat{u}^{k-1}n_1
        +\frac{\lambda_2}{p^{a_2}}\hat{u}^{k-2}n_2+\cdots
    $$ 
where $\lambda_i\in\Z_p$ for $i\geqslant 0$. By part (1), $a_s\leqslant \nu_p(\text{denominator of }c_{s})=s+\nu_p(s!)$.
We also claim that $a_s\leqslant (k-s)+s=k$.
Let the inductive hypothesis for a downwards induction be that $a_{s'}\leqslant k$ for $s'>s$. Let the sum of the remaining terms be     
    $$
    g(\hat{u},\hat{v})=\frac{\lambda_0}{p^{a_0}}\hat{u}^k+\cdots+\frac{\lambda_s}{p^{a_s}}\hat{u}^{k-s}n_s,
    $$ 
which must also satisfy conditions (1) and (2) of Proposition~\ref{subring}. The top coefficient $\frac{\lambda_s}{p^{a_s}}$ is the coefficient of $\hat{u}^{k-s}\hat{v}^s$ so because $g$ satisfies condition (2) of Proposition~\ref{subring} we must have that $a_s\leqslant (k-s)+s=k$. The first step of the induction works in the same way and the induction is complete.
Thus $f$ is a $\Z_p$-linear combination of the elements $F_{i,j,k}$.
\end{proof}

\subsection{Properties of the Basis}

We now consider how the basis we have found above relates to Kane's splitting of $\ell\wedge\ell$.

\begin{dfn}
Let 
    $$
    G_{m,n}=\frac{\pi_m(\ell\wedge\K(n))}{\text{Torsion}}.
    $$ 
Then we have $$G_{*,*}=\bigoplus_{m,n}G_{m,n}\cong\frac{\pi_*(\ell\wedge\ell)}{\text{Torsion}}.$$
\end{dfn}

\begin{prop}\label{homotopyGmn}
For each $n\geqslant0$,
\begin{displaymath}
G_{m,n}=
\begin{cases}
\Z_p & \textrm{ if } m \textrm{ is a multiple of $\rho$ and } m\geqslant\rho n,\\
0 & \textrm{ otherwise.}
\end{cases}
\end{displaymath}
\end{prop}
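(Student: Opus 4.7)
The plan is to run the Adams spectral sequence summand by summand, exactly as in Section~\ref{SecUTT}, but now for $\pi_*(\ell\wedge\K(n))$ rather than the full smash with a Spanier--Whitehead dual. The convergent spectral sequence is
$$E_2^{s,t}=\Ext_{\A_p}^{s,t}(H^*(\ell\wedge\K(n)),\Z/p)\convto\pi_{t-s}(\ell\wedge\K(n))\otimes\Z_p,$$
and the same change-of-rings reduction as before, together with the stable isomorphism of Theorem~\ref{stable Kn} and the dimension-shifting isomorphisms~\eqref{dimshift}, gives, for $s>0$,
$$E_2^{s,t}\cong\Ext_B^{s+\nu_p(n!),\,t-2n(p-1)+\nu_p(n!)}(\Z/p,\Z/p).$$

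Since $\Ext_B(\Z/p,\Z/p)=\Z/p[c,d]$ with $|c|=(1,1)$ and $|d|=(1,2p-1)$, a straightforward bidegree check shows that, writing $m=t-s$, the group $E_2^{s,s+m}$ is a copy of $\Z/p$ precisely when $\rho\mid m$, $m\geq\rho n$, and $s\geq m/\rho-n-\nu_p(n!)$, and is zero otherwise (always keeping $s>0$). The spectral sequence collapses at $E_2$ by the same parity and $c$-multiplication argument used earlier in Section~\ref{SecUTT}. For each ``good'' $m$ the non-zero $E_\infty$-groups in total degree $m$ then form an infinite $c$-tower, and since $c$-multiplication corresponds to multiplication by $p$ in $\pi_*$, this tower contributes a torsion-free $\Z_p$-summand to $G_{m,n}$. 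In the complementary degrees, $E_\infty^{s,s+m}=0$ for every $s>0$, so $\pi_m(\ell\wedge\K(n))$ is concentrated in Adams filtration zero, is entirely $p$-power torsion, and $G_{m,n}=0$ immediately.

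The step I expect to be the main obstacle is the matching upper bound: the analysis above produces one $c$-tower in each good degree, but does not by itself rule out extra torsion-free contributions coming from $E_\infty^{0,m}$. Rather than pin the $s=0$ column down directly, I would settle this by rational comparison. The Kane splitting gives
$$\pi_*(\ell\wedge\ell)\otimes\Q_p=\bigoplus_{n\geq 0}\pi_*(\ell\wedge\K(n))\otimes\Q_p,$$
and, after inverting $p$, Proposition~\ref{subring} identifies the left-hand side with $\Q_p[\hat u,\hat v]$, whose Poincar\'e series is $(1-x^\rho)^{-2}$. The lower bound just established contributes at least $x^{\rho n}(1-x^\rho)^{-1}$ per summand, and $\sum_{n\geq 0}x^{\rho n}(1-x^\rho)^{-1}=(1-x^\rho)^{-2}$. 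Equality forces each $G_{m,n}$ to have $\Z_p$-rank exactly one in the claimed degrees, so, combined with the tower description, $G_{m,n}\cong\Z_p$ as required.
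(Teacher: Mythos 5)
Your proof is correct, but it takes a genuinely different route from the paper's: the paper disposes of this proposition in one line by quoting Kane's computation of $\pi_*(\ell\wedge\K(n))/\mathrm{Torsion}$ in \cite[Proposition 9:2]{Kane}, whereas you rederive it from the Adams spectral sequence for $\ell\wedge\K(n)$ plus a rational count over the splitting. Your $E_2$ identification, the bidegree check and the collapse are exactly the ingredients the paper itself deploys later (in the proofs of Propositions~\ref{0 or 1} and~\ref{iota on z}), so nothing beyond Section~2 technology is needed there. Two remarks. First, the obstacle you single out is not actually one: $E_\infty^{0,m}$ is a subquotient of $\Hom_{\A_p}(H^*(\ell\wedge\K(n)),\Z/p)$, hence a finite $\Z/p$-vector space, and since $\Z_p$-rank is additive in the extension of $E_\infty^{0,m}$ by $F^1$ inside $\pi_m(\ell\wedge\K(n))\otimes\Z_p$, filtration zero can never contribute torsion-free rank; the $c$-tower argument (with $pF^s=F^{s+1}$ and the Hausdorff complete filtration, exactly as in the proof of Proposition~\ref{0 or 1}) already pins $G_{m,n}$ down to $\Z_p$ in the good degrees and to $0$ otherwise, with no global comparison needed. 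Second, your rational patch is nonetheless valid and tidy: it uses only $\pi_*(\ell\wedge\ell)\otimes\Q_p\cong\Q_p[\hat u,\hat v]$ and Kane's splitting, both available before this point, and the coefficientwise count ($k+1$ on each side in degree $\rho k$) does force rank one for $n\leqslant k$ and rank zero for $n>k$, and zero in degrees not divisible by $\rho$. The trade-off is clear: the paper's citation is shortest, while your argument is self-contained modulo the standard convergence and multiplicative-structure facts for the Adams spectral sequence, and it gives an independent check on Kane's statement.
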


\begin{proof}
This follows directly from the description of $\frac{\pi_*(\ell\wedge K(n))}{\text{Torsion}}$ given in
~\cite[Proposition 9:2]{Kane}.
\end{proof}

\begin{dfn}\label{gml}
For $m\geqslant l$, define the element $g_{m,l}\in\Z_p[\frac{\hat{u}}{p},\frac{\hat{v}}{p}]$ to be the 
element produced from $f_{l}$ lying in degree $\rho m$, i.e.
\begin{displaymath}
g_{m,l}=\left\{\begin{array}{ll}
F_{0,m-l,l} & \textrm{ if }  m\leqslant \nu_p(l!)+l,\\
F_{m-l-\nu_p(l!),\nu_p(l!),l} & \textrm{ if }  m>\nu_p(l!)+l.
\end{array} \right.
\end{displaymath}
\end{dfn}

\begin{lem}\label{mhomotopy}
The elements $\{g_{m,l}:0\leqslant l\leqslant m\}$ form a basis for $G_{\rho m,*}$.
\end{lem}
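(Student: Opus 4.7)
The plan is to identify the collection $\{g_{m,l} : 0 \leq l \leq m\}$ as exactly the subset of the $\Z_p$-basis provided by Theorem~\ref{basis}(2) consisting of elements in topological degree $\rho m$. Granting this, the lemma is immediate: the degree-$\rho m$ component of $\pi_*(\ell\wedge\ell)/\text{Torsion}$ is by definition $G_{\rho m, *} = \bigoplus_n G_{\rho m, n}$, which by Proposition~\ref{homotopyGmn} is free over $\Z_p$ of rank $m+1$, matching the cardinality of the putative basis.

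First I would compute the degree of a generic basis element: $\deg F_{i,j,k} = \rho(i+j+k)$. Indeed, $\hat{u}$ and $\hat{v}$ each lie in degree $\rho$, so $f_k = p^{\nu_p(k!)} c_k$ has topological degree $\rho k$, and multiplying by $\hat{u}^{i+j}$ (the factor $p^{-j}$ being a scalar) adds $\rho(i+j)$. Hence a basis element $F_{i,j,k}$ contributes to degree $\rho m$ precisely when $i + j + k = m$, which forces $0 \leq k \leq m$.

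Next, for each such $k$ I would verify that the admissibility constraints in Theorem~\ref{basis}(2) admit a unique solution $(i,j)$. If $m - k \leq \nu_p(k!)$, then $(i,j) = (0, m-k)$ is the only option: the bound $j \leq \nu_p(k!)$ and $i \geq 0$ with $i + j = m - k$ leave no other choice, and the rule $j < \nu_p(k!) \Rightarrow i = 0$ (together with the boundary subcase $j = \nu_p(k!)$, where $i = 0$ is admissible) is satisfied. If instead $m - k > \nu_p(k!)$, then this same rule combined with $j \leq \nu_p(k!)$ forces $j = \nu_p(k!)$ and hence $i = m - k - \nu_p(k!) > 0$. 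Comparing with Definition~\ref{gml} (setting $l = k$), these two cases agree verbatim with the two branches defining $g_{m,l}$, so $g_{m,l}$ is precisely the unique degree-$\rho m$ basis element whose third index is $l$. The $m+1$ distinct basis vectors $g_{m,0}, \dots, g_{m,m}$ therefore span $G_{\rho m, *}$ as a free $\Z_p$-module.

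The argument is essentially bookkeeping; no serious obstacle arises. The only point requiring minor care is the boundary case $m - k = \nu_p(k!)$, where both descriptions of $g_{m,k}$ return the same element $F_{0, \nu_p(k!), k}$, so there is no ambiguity in Definition~\ref{gml}.
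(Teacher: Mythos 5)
Your proposal is correct and follows exactly the paper's argument: the paper's proof is the one-line observation that the $g_{m,l}$ are precisely the basis elements $F_{i,j,k}$ of Theorem~\ref{basis}(2) lying in homotopy degree $\rho m$, which is what you verify (in more detail) via the degree count $i+j+k=m$ and the admissibility constraints. The extra rank check against Proposition~\ref{homotopyGmn} is harmless but not needed.
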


\begin{proof}
The elements $\{g_{m,l}:0\leqslant l\leqslant m\}$ are precisely all of the basis elements $F_{i,j,k}$ which lie in homotopy degree $\rho m$.
\end{proof}

We will need to see that $\pi_*(\ell\wedge \ell)$ contains no torsion of order larger than $p$. To prove
this we need information about the stable class of $H^*(\ell)$.

\begin{prop}\label{stable l}
The stable isomorphism class of $H^*(\ell)$ as a $B$-module is
    $$
    \bigotimes_{i=0}^\infty\bigoplus_{j=0}^{p-1}\Sigma^{j(2p^{i+1}-2p^{i}-\pi_p(i))}I^{j\pi_p(i)},
    $$ 
where $\pi_p(i)=\frac{p^i-1}{p-1}$.
\end{prop}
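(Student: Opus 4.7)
The plan is to apply Theorem $16.3$ of~\cite[Part III]{AdamsSH}, which identifies the stable class of a bounded-below $B$-module of finite type with its $Q_0$- and $Q_1$-homologies viewed as graded $\Z/p$-vector spaces. It therefore suffices to compute these homologies on both sides of the claimed stable isomorphism and check that they agree.

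For the right-hand side I would use the K\"unneth formula for $Q_s$-homology, which applies because each $Q_s$ is primitive in $B$ and thus a derivation on tensor products. This reduces the problem to the basic cases $H(\Sigma^a I^b; Q_0) \iso \Sigma^{a+b}\Z/p$ and $H(\Sigma^a I^b; Q_1) \iso \Sigma^{a+b(2p-1)}\Z/p$. In the $i$-th factor $\bigoplus_{j=0}^{p-1} \Sigma^{j(2p^{i+1}-2p^i-\pi_p(i))} I^{j\pi_p(i)}$, the $Q_0$-degree of the $j$-summand is $j\cdot 2p^i(p-1)$ and, using the elementary identity $p^i+\pi_p(i)=\pi_p(i+1)$, the $Q_1$-degree is $j\cdot 2(p-1)\pi_p(i+1)$. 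Taking the tensor product over $i$ then gives Poincar\'e series that telescope to $\frac{1}{1-t^{2(p-1)}}$ for $Q_0$, and to a similarly clean expression for $Q_1$.

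I would then match this against the $Q_s$-homologies of $H^*(\ell)$. These can be computed directly from the standard description $H_*(\ell)\iso P[\bar{\xi}_i:i\geq 1]\otimes E[\bar{\tau}_i:i\geq 2]$ inside the dual Steenrod algebra, with $Q_s$ acting dually to multiplication by $\tau_s$. A more efficient route is to use Kane's splitting: applying $H^*(-;\Z/p)$ to $\ell\wedge\ell\simeq \ell\wedge\bigvee_n \K(n)$ yields the $B$-module isomorphism $H^*(\ell)\otimes H^*(\ell) \iso H^*(\ell)\otimes\bigoplus_n H^*(\K(n))$, so it suffices to show that $\bigoplus_n H^*(\K(n))$ has the same stable class as the claimed tensor product. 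I would reindex the tensor product by base-$p$ expansions: a general summand is determined by a sequence $(j_0,j_1,\dots)$ with $0\leq j_i\leq p-1$ and only finitely many nonzero, corresponding to $n=\sum_i j_i p^i$. Legendre's formula $\nu_p(n!)=\sum_i j_i\pi_p(i)=\frac{n-s_p(n)}{p-1}$, together with the identity $\sum_i j_i(2p^{i+1}-2p^i-\pi_p(i))=2n(p-1)-\nu_p(n!)$, matches each summand precisely with $\Sigma^{2n(p-1)-\nu_p(n!)}I^{\nu_p(n!)}\approxeq H^*(\K(n))$.

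The main obstacle is the cancellation step implicit in the Kane-splitting approach: the monoid of stable $B$-module classes is not in general cancellative, so deducing $[H^*(\ell)]=\bigoplus_n [H^*(\K(n))]$ from $[H^*(\ell)]^2=[H^*(\ell)]\cdot \bigoplus_n [H^*(\K(n))]$ needs justification. I would handle this by passing to Poincar\'e series of $Q_s$-homologies, which are multiplicative under tensor product; since the Poincar\'e series of $H(H^*(\ell);Q_s)$ has nonzero constant term, division in $\Z[[t]]$ uniquely recovers the $Q_s$-homology of $H^*(\ell)$. Comparing with the telescoped series from the right-hand side, and then invoking Theorem $16.3$, delivers the desired stable isomorphism.
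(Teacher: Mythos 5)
The decisive step in your plan --- compute the $Q_0$- and $Q_1$-homologies of both sides, ``check that they agree,'' and invoke \cite[Part III, Theorem 16.3]{AdamsSH} --- does not work, and it is exactly where the content of the proposition lies. Adams' Theorem 16.3 does not identify the stable class of a bounded-below $B$-module of finite type with its two Margolis homologies as graded vector spaces; it applies only when both homologies are one-dimensional (which is how it is used for $H^*(\K(n))$ in Theorem~\ref{stable Kn}), and the general statement you are implicitly using is false. The Margolis homologies record two graded vector spaces but not how the $Q_0$- and $Q_1$-classes are paired into indecomposable pieces: for example $I^{3}\oplus \Sigma^{2p}I$ and $\Sigma I^{2}\oplus \Sigma^{2p-1}I^{2}$ have identical $Q_0$-homology (degrees $3$ and $2p+1$) and identical $Q_1$-homology (degrees $6p-3$ and $4p-1$), yet they are not stably isomorphic, since the stably indecomposable invertible summands $\Sigma^{a}I^{b}$ are pairwise distinct. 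What the theory does provide is the map criterion \cite[Part III, Lemma 16.7]{AdamsSH}: a $B$-module \emph{map} inducing isomorphisms on both $Q_0$- and $Q_1$-homology is a stable equivalence. Your proposal never produces such a map, and your fallback route through Kane's splitting suffers from the same defect: the Poincar\'e-series division only recovers the Margolis homology of $H^*(\ell)$, not its stable class, so the cancellation problem you correctly flag is not actually resolved by it.

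The paper's proof supplies precisely the missing ingredient. It computes $H(H_{-*}(\ell);Q_0)\cong\Z/p[\bar{\xi}_1]$ and $H(H_{-*}(\ell);Q_1)\cong\Z/p[\bar{\xi}_1,\bar{\xi}_2,\ldots]/(\bar{\xi}_1^p,\bar{\xi}_2^p,\ldots)$, takes the lightning-flash submodules $M_i\subset H_{-*}(\ell)$ of \cite[Part III, Proposition 16.4]{AdamsSH}, whose Margolis homologies are generated by $\bar{\xi}_1^{p^{i-1}}$ and $\bar{\xi}_i$, and shows that the evident $B$-module map $\bigotimes_{i\geq1}\bigoplus_{j=0}^{p-1}M_i^{j}\to H_{-*}(\ell)$ induces isomorphisms on both Margolis homologies (K\"unneth); Lemma 16.7 then makes it a stable equivalence, and dualising gives the stated class of $H^*(\ell)$. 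Your degree bookkeeping --- the $Q_0$-degree $2jp^{i}(p-1)$, the $Q_1$-degree $2j(p-1)\pi_p(i+1)$ via $p^{i}+\pi_p(i)=\pi_p(i+1)$, and the base-$p$ reindexing matching summands with $\Sigma^{2n(p-1)-\nu_p(n!)}I^{\nu_p(n!)}\approxeq H^*(\K(n))$ --- is correct and consistent with the paper, but it verifies only the Margolis homology comparison and cannot substitute for exhibiting the map.
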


\begin{proof}
We calculate the $Q_0$ and $Q_1$ homologies of $H_{-*}(\ell)$ explicitly and 
deduce its stable class and then dualise this statement to find the stable class of $H^*(\ell)$.
Recall that
    $$
    \pi_*(\ell\wedge H\Z/p)\cong H_*(\ell)\cong\Z/p[\bar{\xi_1},\bar{\xi_2},\ldots]\otimes E(\bar{\tau_2},\bar{\tau_3},\ldots),
    $$
where a bar over an element denotes the image of that element under the anti-automorphism $\chi$ of the
dual Steenrod algebra $\A_p^*$; see~\cite{Knapp}.
The action of $Q_0$ sends $\bar{\xi}_i$ to zero and $\bar{\tau}_i$ to $-\bar{\xi}_i$. Using the K\"{u}nneth Theorem, we see
that the $Q_0$ homology of $\pi_{-*}(\ell\wedge H\Z/p)$ is isomorphic to $\Z/p[\bar{\xi_1}]$.
The action of $Q_1$ sends $\bar{\xi}_i$ to zero and $\bar{\tau}_i$ to $-\bar{\xi}_{i-1}^p$. 
Again using the K\"{u}nneth Theorem, we see that the $Q_1$ homology of $\pi_{-*}(\ell\wedge H\Z/p)$ is isomorphic to ${\Z/p[\bar{\xi}_1,\bar{\xi}_2,\ldots]}/{(\bar{\xi}_1^p,\bar{\xi}_2^p,\ldots)}$.

Recall from~\cite[Part III, Proposition 16.4]{AdamsSH} that there are so-called lightning flash modules $M_i$
characterised as follows.
For $i\geq1$, $M_i$ is a finite-dimensional submodule of $\pi_{-*}(\ell\wedge H\Z/p)$ such that
\begin{list}
{(\arabic{itemcounter})}
{\usecounter{itemcounter}\leftmargin=0.5em}
\item $H(M_i;Q_0)\cong\Z/p$ generated by $\bar{\xi}_1^{p^{i-1}}$ and
\item $H(M_i;Q_1)\cong\Z/p$ generated by $\bar{\xi}_i$.
\end{list}
It follows that there is a stable isomorphism
    $$
    M_i\approxeq\Sigma^{-2p^i+2p^{i-1}+\pi_p(i-1)}I^{-\pi(i-1)}.
    $$
We claim that the map
    $$
    \bigotimes_{i=1}^\infty\bigoplus_{j=0}^{p-1}M_i^j\rightarrow
    \pi_{-*}(\ell\wedge H\Z/p)
    $$
is a stable isomorphism.
Indeed it is clear that it is a $B$-module map and, again using the K\"{u}nneth
Theorem, we see that it induces an isomorphism on both $Q_0$ and $Q_1$ homology,
so it is a stable isomorphism by~\cite[Part III, Lemma 16.7]{AdamsSH}.
 
Dualising, we get a stable isomorphism 
    $$
    H^*(\ell)\approxeq(1+M_1^*+{M_1^*}^2+\cdots+{M_1^*}^{p-1})(1+M_2^*+{M_2^*}^2+\cdots+{M_2^*}^{p-1})\ldots,
    $$ 
and combining this with  
    $$
    M_i^*\approxeq\Sigma^{2p^i-2p^{i-1}-\pi_p(i-1)}I^{\pi(i-1)}
    $$
gives the result.
\end{proof}

\begin{prop}\label{p torsion}
Let $\tilde{G}_{m,n}=\pi_m(\ell\wedge\K(n))$, then $\tilde{G}_{m,n}\cong G_{m,n}\oplus W_{m,n}$ where $W_{m,n}$ is a finite elementary abelian $p$-group, i.e. $\tilde{G}_{m,n}$ contains no torsion of order larger than $p$.
\end{prop}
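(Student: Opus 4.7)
The plan is to set up the mod $p$ Adams spectral sequence
\[
E_2^{s,t} = \Ext^{s,t}_{\A_p}(H^*(\ell\wedge\K(n)),\Z/p) \Longrightarrow \pi_{t-s}(\ell\wedge\K(n))
\]
and apply the same reductions as for~(\ref{ASS}) to get $E_2^{s,t}\cong\Ext^{s,t}_B(H^*(\K(n)),\Z/p)$. By Theorem~\ref{stable Kn} together with~(\ref{dimshift}), the $s>0$ part is a shifted copy of $\Ext_B^{*,*}(\Z/p,\Z/p)=\Z/p[c,d]$, and collapse at $E_2$ follows by the same argument as in the collapse Lemma above. Fixing $m$, this means that for $s>0$ the group $E_\infty^{s,m+s}$ is $\Z/p$ precisely when $m=\rho(n+j)$ for some $j\geq 0$ and $s\geq s_0(m):=\max(1,\,j-\nu_p(n!))$, and vanishes otherwise; in particular it is zero for all $s>0$ whenever $G_{m,n}=0$. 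In the polynomial region $s\geq s_0(m)$, multiplication by $c$ is an isomorphism $E_\infty^{s,m+s}\to E_\infty^{s+1,m+s+1}$.

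The key further ingredient is that this $c$-action on $E_\infty$ detects multiplication by $p$ on the abutment, exactly as was used in the proof of Proposition~\ref{lambda bij}. Combining the two facts, any $x\in\pi_m(\ell\wedge\K(n))$ of Adams filtration $s\geq s_0(m)$ satisfies $p^k x\neq 0$ for every $k\geq 0$, since its detection in $E_\infty^{s+k,m+s+k}$ is $c^k\bar x\neq 0$; hence such an $x$ is not torsion. Therefore $F^{s_0(m)}\pi_m$ is torsion-free, and since $E_\infty^{s,m+s}=0$ for $0<s<s_0(m)$ we have $F^1\pi_m=F^{s_0(m)}\pi_m$. The torsion subgroup $T$ of $\tilde G_{m,n}$ thus injects into $\pi_m/F^1=E_\infty^{0,m}=\Hom_B^m(H^*(\K(n)),\Z/p)$, which is a finite $\Z/p$-vector space.

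It follows that $W_{m,n}:=T$ is a finite elementary abelian $p$-group. Since the quotient $\tilde G_{m,n}/W_{m,n}\cong G_{m,n}$ is a free $\Z_p$-module (either $\Z_p$ or $0$), the short exact sequence $0\to W_{m,n}\to\tilde G_{m,n}\to G_{m,n}\to 0$ splits over $\Z_p$, giving the desired decomposition. The main point requiring care is the combinatorial observation that the $c$-action is injective throughout the non-zero part of the shifted polynomial algebra, forcing every element of sufficiently high Adams filtration to have infinite order; the rest reduces to standard Adams spectral sequence machinery (the identification of $c$ with $p$ via change of rings from $a_0\in\Ext^{1,1}_{\A_p}(\Z/p,\Z/p)$, and Hausdorffness of the filtration in the $p$-complete setting).
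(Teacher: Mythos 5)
Your proof is correct, but it follows a genuinely different route from the paper's. The paper does not analyse the Adams spectral sequence of $\ell\wedge\K(n)$ at this point: it verifies (via the K\"unneth formula and Proposition~\ref{stable l}) that $H^*(\ell\wedge\ell)$ is stably a sum of modules $\Sigma^{a}I^{b}$ with $b\geqslant 0$ and $a+b$ even, and then quotes two results of Adams — Lemma 17.1 of Part III (no torsion of order greater than $p$ in $H_*(ku\wedge\ell;\Z)$) and Proposition 17.2(i) (the Hurewicz map $\pi_*(ku\wedge\ell)\to H_*(ku\wedge\ell;\Z)$ is a monomorphism) — deducing the statement for $\ell\wedge\ell$ and hence for each wedge summand $\ell\wedge\K(n)$. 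You instead work directly in the mod $p$ Adams spectral sequence of each $\ell\wedge\K(n)$: after collapse, the $s>0$ region is a shifted copy of $\Z/p[c,d]$ by Theorem~\ref{stable Kn}, multiplication by $c$ is injective there, and since $c$ detects multiplication by $p$ (the same principle the paper uses in the proof of Proposition~\ref{lambda bij} and, via Adams III.17.11, in Lemma~\ref{zonly}), every element of positive Adams filtration has infinite order; hence the torsion subgroup injects into $E_\infty^{0,m}\subseteq\Hom_B(H^*(\K(n)),\Z/p)$, a finite $\Z/p$-vector space, and the extension splits because $G_{m,n}$ is free. Your route avoids Proposition~\ref{stable l} and the integral-homology/Hurewicz detour entirely, using only data already established for $\K(n)$, at the cost of invoking strong (Hausdorff) convergence of the Adams spectral sequence and the multiplicative detection of $p$ by $c$, both standard and both explicitly flagged by you. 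Two small points to tidy: when $m$ is not of the form $\rho(n+j)$ your $s_0(m)$ is undefined, but then the entire $s>0$ region vanishes, so $F^1\pi_m=0$ and the argument goes through unchanged; and the freeness of $G_{m,n}\cong\tilde{G}_{m,n}/W_{m,n}$ used for the splitting should be justified either by finite generation over $\Z_p$ or by citing Proposition~\ref{homotopyGmn}, which is available since it precedes this proposition.
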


\begin{proof}
This is proved for the case $p=2$ in \cite[Part III, Chapter 17]{AdamsSH}; the odd primary analogue is similar. We require two conditions in order to apply the two results of Adams necessary to prove this. Firstly that $H_r(\ell\wedge\ell;\Z)$ is finitely generated for each $r$ which is true (see \cite[p.353]{AdamsSH}) and secondly that, as a $B$-module, $H^*(\ell\wedge\ell)$ is stably isomorphic to $\oplus_i\Sigma^{a(i,p)}I^{b(i,p)}$ where $b(i,p)\geqslant0$ and $a(i,p)+b(i,p)\equiv0\mod2$.

For the second condition, using the K\"{u}nneth formula and Proposition~\ref{stable l},
    $$
    H^*(\ell\wedge\ell)\cong H^*(\ell)\otimes H^*(\ell)\approxeq
        \left(
        \bigotimes_{i=0}^\infty\bigoplus_{j=0}^{p-1}\Sigma^{j(2p^{i+1}-2p^{i}-\pi_p(i))}I^{j\pi_p(i)}
        \right)^{\otimes 2}.
    $$
Then it is clear that 
the required condition holds for each summand and hence for $H^*(\ell\wedge\ell)$.

Given these assumptions we can now apply \cite[Part III, Lemma 17.1]{AdamsSH} which states that $H_*(ku\wedge\ell;\Z)$ and hence $H_*(\ell\wedge\ell;\Z)$ has no torsion of order higher than $p$. Then from \cite[Part III, Proposition 17.2(i)]{AdamsSH}, the Hurewicz homomorphism $$h:\pi_*(ku\wedge\ell)\rightarrow H_*(ku\wedge\ell;\Z)$$ is a monomorphism. Since this is true of $ku\wedge\ell$ it follows that the same is true of $\ell\wedge\ell$ and so the result follows.
\end{proof}

\begin{dfn}
Consider the projection map 
    $$
    \ell\wedge\ell\simeq\bigvee_{n\geqslant0}\ell\wedge\K(n)\rightarrow\ell\wedge\K(n)
    $$ 
and let $P_n:G_{*,*}\rightarrow G_{*,n}$ be the induced projection map on homotopy modulo torsion.
\end{dfn}

\begin{lem}\label{proj}
$P_n(g_{n,l})=0$ if $l<n$.
\end{lem}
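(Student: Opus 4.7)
The plan is to reduce the claim to a dimensional vanishing by establishing a recursive $\hat{u}$-shift relation between $g_{n,l}$ and $g_{n-1,l}$. First I would unwind Definition~\ref{gml}: combining the two branches gives the uniform formula
$$g_{m,l} = p^{\max(\nu_p(l!)-(m-l),\,0)}\,\hat{u}^{m-l}\,c_l,$$
and a direct comparison for $m = n-1$ versus $m = n$ (with $l < n$ fixed) yields the identity
$$p^{\epsilon}\,g_{n,l} = \hat{u}\,g_{n-1,l}\qquad\text{in}\ G_{*,*},$$
where $\epsilon = 1$ when $n \leqslant \nu_p(l!)+l$ and $\epsilon = 0$ otherwise. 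This routine case check (essentially tracking when the max-expression drops by $1$ as $m$ increases by $1$) is the only computation needed.

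Next, because Kane's splitting $\ell\wedge\ell\simeq\ell\wedge\bigvee_{n}\K(n)$ is a splitting of \emph{left} $\ell$-modules, the projection $P_n\colon G_{*,*}\to G_{*,n}$ is $\pi_*(\ell)=\Z_p[\hat{u}]$-linear. Applying $P_n$ to the above identity gives
$$p^{\epsilon}\,P_n(g_{n,l}) = \hat{u}\,P_n(g_{n-1,l})\quad\text{in}\ G_{\rho n,n}.$$
But $P_n(g_{n-1,l})\in G_{\rho(n-1),n}$, and this group is zero by Proposition~\ref{homotopyGmn}, since $\rho(n-1)<\rho n$. Hence $p^{\epsilon}P_n(g_{n,l})=0$ in $G_{\rho n,n}\cong\Z_p$, and torsion-freeness of $\Z_p$ forces $P_n(g_{n,l})=0$, as required.

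The main (very minor) obstacle is keeping track of the two branches of Definition~\ref{gml} when deriving the $\hat{u}$-shift identity; once this algebraic relation is pinned down, the vanishing of $P_n(g_{n,l})$ is an immediate consequence of the left-$\ell$-module compatibility of Kane's splitting together with the explicit formula for $G_{m,n}$ in Proposition~\ref{homotopyGmn}.
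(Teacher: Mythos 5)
Your proof is correct and is essentially the paper's own argument: both rest on writing $g_{n,l}$ (up to a power of $p$) as a power of $\hat{u}$ times a class in lower degree, using that $P_n$ is a left $\ell$-module (i.e.\ $\Z_p[\hat{u}]$-linear) map, invoking the vanishing $G_{m,n}=0$ for $m<\rho n$ from Proposition~\ref{homotopyGmn}, and using torsion-freeness to dispose of the stray $p$-powers. The only cosmetic difference is that the paper factors out $\hat{u}^{n-l}$ in one step after tensoring with $\Q_p$, landing in $G_{\rho l,n}\otimes\Q_p=0$, whereas you descend one degree at a time via the integral relation $p^{\epsilon}g_{n,l}=\hat{u}\,g_{n-1,l}$ and cancel a single factor of $p$ in $G_{\rho n,n}\cong\Z_p$.
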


\begin{proof}
Since $G_{m,n}$ is torsion free we can consider just whether $P_n(g_{n,l})$ is zero in $G_{*,n}\otimes\Q_p$. Let $l<n$, then for $\alpha(n,l)\in\N_0$,
    $$
    g_{n,l}=\frac{\hat{u}^{n-l}}{p^{\alpha(n,l)}}f_{l}\in 
    \hat{u}^{n-l}\frac{\pi_{\rho l}(\ell\wedge\ell)}{\text{Torsion}}\otimes\Q_p
    \subset\frac{\pi_{\rho n}(\ell\wedge\ell)}{\text{Torsion}}\otimes\Q_p.
    $$
Since $P_n$ is a left $\ell$-module map, $P_n(g_{n,l})$ is $\hat{u}^{n-l}$ times an element of 
$G_{\rho l, n}$. But this group is zero, by Proposition~\ref{homotopyGmn}.
\end{proof}

\subsection{The Elements $z_{m}$}

Now we choose labels for generators of certain homotopy groups and compare with our basis elements.

\begin{dfn}
Let $z_{n}$ be a generator for $G_{\rho n,n}\cong\Z_p$ and let $\tilde{z}_{n}$ be any element in $\tilde{G}_{\rho n,n}\cong G_{\rho n,n}\oplus W_{\rho n,n}$ where the first co-ordinate is $z_{n}$.
\end{dfn}

\begin{prop}\label{0 or 1}
In the Adams spectral sequence     
    $$
    E_2^{s,t}\cong\Ext_{\A_p}^{s,t}(H^*(\ell\wedge\K(n)),\Z/p)\Longrightarrow\pi_{t-s}(\ell\wedge\K(n))\otimes\Z_p
    $$
the class of $\tilde{z}_{n}$ is represented in either $E_2^{0,\rho n}$ or $E_2^{1,\rho n+1}$.
\end{prop}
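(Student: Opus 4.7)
The plan is to rule out Adams filtration $s \geq 2$ for $\tilde{z}_n$, by using the fact that $z_n$ generates the torsion-free summand $G_{\rho n, n} \iso \Z_p$ and so cannot be $p$-divisible. First, using Theorem~\ref{stable Kn} and the change-of-rings and dimension-shifting manipulations already employed in Section~\ref{SecUTT}, for $s \geq 1$ I would identify
$$E_2^{s, \rho n + s} \iso \Ext_B^{s + \nu_p(n!),\, s + \nu_p(n!)}(\Z/p, \Z/p) = \Z/p \cdot c^{s + \nu_p(n!)}.$$
The neighbouring stems $t - s = \rho n \pm 1$ correspond to $\Ext_B$ groups with $v - u = \pm 1$, which vanish because $\rho = 2(p-1) \geq 4$ for odd $p$. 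Hence no differentials enter or leave the bidegrees $(s, \rho n + s)$ with $s \geq 1$, so $E_\infty^{s, \rho n + s} = E_2^{s, \rho n + s}$ there, and multiplication by $c$ gives an isomorphism $E_\infty^{s-1, \rho n + s - 1} \to E_\infty^{s, \rho n + s}$ whenever $s \geq 2$.

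Now suppose $\tilde{z}_n$ is represented at Adams filtration $s \geq 2$. As in the proof of Proposition~\ref{lambda bij}, multiplication by $c$ in the spectral sequence corresponds to multiplication by $p$ on homotopy; combining this with the $c$-isomorphism above, I can write $\tilde{z}_n = p \tilde{y}_1 + \tilde{w}_1$ with $\tilde{w}_1$ at filtration $\geq s + 1 \geq 3$. Applying the same step to $\tilde{w}_1$ and iterating gives $\tilde{z}_n = p(\tilde{y}_1 + \cdots + \tilde{y}_k) + \tilde{w}_k$ with $\tilde{w}_k$ at filtration $\geq s + k$. Since $\pi_*(\ell \wedge \K(n))$ is $p$-complete and the Adams filtration is Hausdorff by convergence of the spectral sequence, this series converges to give $\tilde{z}_n \in p \cdot \pi_{\rho n}(\ell \wedge \K(n))$. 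Projecting via $\tilde{G}_{\rho n, n} \surj G_{\rho n, n}$ then forces $z_n \in p \, G_{\rho n, n}$, contradicting the fact that $z_n$ generates $\Z_p$. Hence $\tilde{z}_n$ is represented at filtration $0$ or $1$.

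The main obstacle I anticipate is the convergence step — verifying rigorously that the infinite sum $\tilde{y}_1 + \tilde{y}_2 + \cdots$ assembles to an element of $\pi_{\rho n}(\ell \wedge \K(n))$, or equivalently that $\bigcap_s F^s \pi_{\rho n}(\ell \wedge \K(n)) = 0$. This should come down to standard strong convergence of the mod $p$ Adams spectral sequence for the $p$-complete spectrum $\ell \wedge \K(n)$. Everything else — the $\Ext_B$ computation, the absence of differentials in this stem, and the identification of $c$ with multiplication by $p$ — parallels apparatus already set up in Section~\ref{SecUTT}. A useful sanity check is the analogous situation for $\pi_\rho(\ell)$, where the generator $\hat{u}$ is represented by $d \in E_2^{1,\rho+1}$ (not $c^s$) precisely because the $c$-multiplication argument cannot push it below filtration $1$.
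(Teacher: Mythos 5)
Your argument is correct and takes essentially the same route as the paper: identify the stem $t-s=\rho n$ as a $c$-tower for $s\geq 1$, use that multiplication by $c$ corresponds to multiplication by $p$ on homotopy, and conclude that Adams filtration $\geq 2$ would force $z_{n}$ to be divisible by $p$ in $G_{\rho n,n}\cong\Z_p$, contradicting its choice as a generator. The paper packages the divisibility step via $pF^i=F^{i+1}$ for $i\geq1$ with $F^1\cong\Z_p$ (and observes that $W_{\rho n,n}$, being elementary abelian, is detected in filtration $0$), which avoids the infinite iteration whose convergence you flag; that point is in any case covered by standard strong convergence of the mod $p$ Adams spectral sequence for the $p$-complete, finite-type spectrum $\ell\wedge\K(n)$.
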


\begin{proof}
Consider the Adams spectral sequence
    \begin{align*}
        E_2^{s,t}&=\Ext_{\A_p}^{s,t}(H^*(\ell\wedge\K(n)),\Z/p)\\
               & \cong\Ext_{\A_p}^{s,t}(H^*(\ell)\otimes H^*(\K(n)),\Z/p)\\
               &\cong\Ext_B^{s,t}(H^*(\K(n)),\Z/p) \qquad \qquad\Longrightarrow\pi_{t-s}(\ell\wedge\K(n))\otimes\Z_p.
    \end{align*}
By Theorem~\ref{stable Kn}, for $s>0$,
    $$
        E_2^{s,t}\cong\Ext_B^{s,t}(\Sigma^{\rho n-\nu_p(n!)}I^{\nu_p(n!)},\Z/p)
        \cong\Ext_B^{s+\nu_p(n!),t-\rho n+\nu_p(n!)}(\Z/p,\Z/p).
   $$
We see that, away from $s=0$, the $E_2$ term is isomorphic to a shifted version of $\Ext_B^{*,*}(\Z/p,\Z/p)\cong\Z/p[c,d]$ with $c\in\Ext_B^{1,1}$ and $d\in\Ext_B^{1,2p-1}$. The spectral sequence collapses at $E_2$, just as for~(\ref{ASS}).
The spectral sequence gives us information about a filtration $F^i$ of $\tilde{G}_{\rho n,n}=\pi_{\rho n}(\ell\wedge\K(n))\otimes\Z_p$.
The multiplicative structure of the spectral sequence is such that $pF^i= F^{i+1}$,
for $i\geqslant 1$ and $F^1\cong \Z_p$. By Proposition~\ref{p torsion}, $W_{\rho n,n}$ is an elementary abelian $p$-group, so $pW_{\rho n,n}=0$ and $W_{\rho n,n}$ must be represented in $E_2^{0,\rho n}$.

Assume that the generator $\tilde{z}_{n}$ is represented in $E_2^{j,\rho n+j}$ for $j\geqslant2$, then we must have that $\tilde{z}_{n}\in F^j$. Then there is some generator $\tilde{z}_{n}'\in F^1$ such that $p^{j}\tilde{z}_{n}'$ is a generator for $F^{j+1}$. Therefore, for some $\gamma\in\Z_p$, 
    $$
    p^j\gamma\tilde{z}_{n}'=p\tilde{z}_{n}.
    $$ 
Thus
    $$
    p(p^{j-1}\gamma\tilde{z}_{n}'-\tilde{z}_{n})=0
    $$ 
and hence we must have $p^{j-1}\gamma\tilde{z}_{n}'-\tilde{z}_{n}\in W_{\rho n,n}$ because nothing else has any torsion. This implies that in $G_{\rho n,n}$, $z_{n}$ has a factor $p$ which contradicts the fact that we chose $z_{n}$ to be a generator of $G_{\rho n,n}\cong\Z_p$.
\end{proof}

We can now give a more explicit description of the generators $z_{n}$ in terms of our basis elements $g_{m,l}$. We first state
a lemma we will need; this is readily proved from the definitions.

\begin{lem}\label{alglem}
For $0\leqslant i\leqslant m-1$,
\footnotesize
\begin{displaymath}
\left(\frac{\hat{u}}{p}\right)^{\nu_p(m!)}g_{m,i}=\begin{cases}
\frac{1}{p^{\nu_p(m!)+m-\nu_p(i!)-i}}\hat{u}^{\nu_p(m!)-\nu_p(i!)+m-i}\left(\frac{\hat{u}}{p}\right)^{\nu_p(i!)}f_{i} & \textrm{ if } m\leqslant \nu_p(i!)+i,\\
\frac{1}{p^{\nu_p(m!)}}\hat{u}^{\nu_p(m!)-\nu_p(i!)+m-i}\left(\frac{\hat{u}}{p}\right)^{\nu_p(i!)}f_{i} & \textrm{ if }m>\nu_p(i!)+i.
\end{cases}
\end{displaymath}\qed
\end{lem}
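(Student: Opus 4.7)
The plan is to verify the identity by direct substitution of the two cases in Definition~\ref{gml} and simple manipulation of powers of $\hat u$ and $p$, separating the two regimes according to whether $m\leqslant\nu_p(i!)+i$ or $m>\nu_p(i!)+i$.

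First I would handle the case $m\leqslant\nu_p(i!)+i$. Here Definition~\ref{gml} gives $g_{m,i}=F_{0,m-i,i}=\left(\tfrac{\hat u}{p}\right)^{m-i}f_i$, so multiplying by $\left(\tfrac{\hat u}{p}\right)^{\nu_p(m!)}$ yields $\left(\tfrac{\hat u}{p}\right)^{\nu_p(m!)+m-i}f_i$. Rewriting this as $\hat u^{\nu_p(m!)+m-i}/p^{\nu_p(m!)+m-i}$ and then extracting a factor of $\left(\tfrac{\hat u}{p}\right)^{\nu_p(i!)}$ leaves $\hat u^{\nu_p(m!)-\nu_p(i!)+m-i}/p^{\nu_p(m!)-\nu_p(i!)+m-i}$ in front, which is exactly the first branch of the claimed formula.

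Next I would handle the case $m>\nu_p(i!)+i$, where Definition~\ref{gml} gives $g_{m,i}=F_{m-i-\nu_p(i!),\nu_p(i!),i}=\hat u^{m-i-\nu_p(i!)}\left(\tfrac{\hat u}{p}\right)^{\nu_p(i!)}f_i$. Multiplying by $\left(\tfrac{\hat u}{p}\right)^{\nu_p(m!)}$ and pulling the $p^{\nu_p(m!)}$ to the front produces $\tfrac{1}{p^{\nu_p(m!)}}\hat u^{\nu_p(m!)+m-i-\nu_p(i!)}\left(\tfrac{\hat u}{p}\right)^{\nu_p(i!)}f_i$, matching the second branch.

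There is no genuine obstacle here; the only point to watch is keeping track of the exponent $\nu_p(m!)-\nu_p(i!)+m-i$ consistently across both cases, so I would present the two chains of equalities side by side and mark them as routine, invoking only Definitions~\ref{fk}, \ref{basis elements} and \ref{gml}.
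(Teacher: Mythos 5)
Your proposal is correct and is exactly the routine verification the paper has in mind: it states the lemma with the remark that it "is readily proved from the definitions," and your two-case substitution of Definition~\ref{gml} into Definition~\ref{basis elements}, followed by bookkeeping of powers of $\hat u$ and $p$, is precisely that argument. Both branches of your computation match the stated formula (noting $\nu_p(m!)+m-\nu_p(i!)-i=\nu_p(m!)-\nu_p(i!)+m-i$), so there is nothing to add.
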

\normalsize

\begin{dfn}
For $m,i\geq 0$, define $\beta(m,i)\in\N_0$ by
    \begin{displaymath}
        \beta(m,i)=\left\{\begin{array}{ll}
        \nu_p(m!) & \textrm{ if } m>\nu_p(i!)+i,\\
        \nu_p(m!)+m-\nu_p(i!)-i & \textrm{ if } m\leqslant \nu_p(i!)+i.
        \end{array}\right.
    \end{displaymath}

\end{dfn}

\begin{prop} \label{z in basis elements}
The generators $z_{m}\in G_{\rho m, *}$ have the following form 
    $$
    z_{m}=\sum_{i=0}^m p^{\beta(m,i)}\lambda_{m,i}g_{m,i}
    $$
where $\lambda_{s,t}\in\Z_p$ if $s\neq t$, $\lambda_{s,s}\in\Z_p^\times$.
\end{prop}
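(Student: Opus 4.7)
By Lemma~\ref{mhomotopy} the elements $\{g_{m,i} : 0 \leq i \leq m\}$ form a $\Z_p$-basis of $G_{\rho m, *}$, so we may uniquely write $z_m = \sum_{i=0}^m \mu_{m,i} g_{m,i}$ with each $\mu_{m,i} \in \Z_p$. The task reduces to showing that $\mu_{m,m}$ is a $p$-adic unit and that $p^{\beta(m,i)} \mid \mu_{m,i}$ for every $i < m$.

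For the diagonal coefficient, the plan is to apply the projection $P_m \colon G_{\rho m, *} \to G_{\rho m, m}$ to both sides. Lemma~\ref{proj} gives $P_m(g_{m,i}) = 0$ for $i < m$, so the expansion collapses to $z_m = P_m(z_m) = \mu_{m,m} P_m(g_{m,m}) = \mu_{m,m} P_m(f_m)$. Since $P_m$ is surjective (being a projection onto a direct summand) and already annihilates all but one basis element, $P_m(f_m)$ must generate $G_{\rho m, m} \cong \Z_p$; hence $\mu_{m,m}$ is forced to be a unit. This handles the case $i = m$, where $\beta(m,m) = 0$.

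The bulk of the work is the divisibility for $i < m$, which I would attack componentwise in the Kane splitting $G_{\rho m, *} = \bigoplus_{n=0}^m G_{\rho m, n}$. The rational identity $g_{m,i} = p^{-\gamma(m,i)} \hat{u}^{m-i} f_i$, with $\gamma(m,i) = \min(m-i, \nu_p(i!))$, together with the fact that multiplication by $\hat{u}$ is a left $\ell$-module map and hence respects the Kane splitting, shows that $g_{m,i}$ has non-zero components only in those $G_{\rho m, n}$ with $n \leq i$. Fixing a generator $e_n^{(m)}$ of each $G_{\rho m, n}$, the change-of-basis matrix $M$ expressing $\{g_{m,i}\}$ in the basis $\{e_n^{(m)}\}$ is therefore upper-triangular with unit diagonal. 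Since $z_m$ equals $e_m^{(m)}$ up to a unit, the coefficients $\mu_{m,i}$ are the entries of the last column of $M^{-1}$, and the claimed divisibility is equivalent to controlling the $p$-adic valuations of the $M_{n,i}$. I would compute these valuations by analysing the integrality of $g_{m,i} = p^{\nu_p(i!)-\gamma(m,i)} \hat{u}^{m-i} c_i$ componentwise, with Lemma~\ref{alglem} supplying the explicit rearrangement that exposes how the powers of $p$ and $\hat{u}$ combine inside each Kane summand.

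The principal obstacle is tracking these valuations across the summands. This requires understanding how multiplication by $\hat{u}$ relates generators of $G_{\rho j, n}$ for consecutive $j$, and connecting this to the Adams filtration of the generators via Proposition~\ref{0 or 1} and the analogous spectral-sequence arguments for $j > n$. Once the entries of $M$ are pinned down and their valuations are matched against the piecewise definition of $\beta(m,i)$ in Definition~3.13, the divisibility $p^{\beta(m,i)} \mid \mu_{m,i}$ follows by a descending induction on $n$ from $n = m-1$ down to $n = 0$, using the back-substitution produced by the triangular form of $M$.
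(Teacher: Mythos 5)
Your treatment of the diagonal coefficient is essentially the paper's: apply $P_m$, use Lemma~\ref{proj}, and conclude that the coefficient of $g_{m,m}$ is a unit (and note that the unit-diagonality of your change-of-basis matrix $M$ is in any case automatic, since $M$ converts one $\Z_p$-basis of $G_{\rho m,*}$ into another and is triangular, so its diagonal entries must be units; your stated justification via surjectivity of $P_m$ only gives the $i=m$ case). The gap is in the off-diagonal divisibility $p^{\beta(m,i)}\mid\mu_{m,i}$, which is the actual content of the proposition. You reduce it to ``controlling the $p$-adic valuations of the $M_{n,i}$'' and say you would compute these by analysing the multiplication-by-$\hat u$ maps between the groups $G_{\rho j,n}$ together with Adams filtration information; but this computation is never carried out, and it is precisely the hard part. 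Worse, the input it would need --- how generators of $G_{\rho j,n}$ and $G_{\rho(j+1),n}$ are related under $\hat u$, i.e.\ the $\Z_p[\hat u]$-module structure of each $G_{*,n}$ --- is exactly what Lemma~\ref{zonly} establishes, and that lemma rests on Proposition~\ref{0}, whose proof in the paper appeals to the proof of the present proposition; Proposition~\ref{0 or 1} alone (filtration $0$ or $1$) does not determine the valuations. So your route is both unfinished and in danger of circularity.

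The paper avoids all of this with one integrality observation that your proposal is missing: from $z_m=\lambda_{m,m}P_m(f_m)$ one gets that $\left(\frac{\hat u}{p}\right)^{\nu_p(m!)}z_m=\lambda_{m,m}P_m\left(\left(\frac{\hat u}{p}\right)^{\nu_p(m!)}f_m\right)$ is an integral class (it is $P_m$ applied to a multiple of the basis element $F_{0,\nu_p(m!),m}$), hence a $\Z_p$-linear combination of the basis of Theorem~\ref{basis}. Multiplying your expansion $z_m=\sum_i\mu_{m,i}g_{m,i}$ by $\left(\frac{\hat u}{p}\right)^{\nu_p(m!)}$ and applying Lemma~\ref{alglem}, the $i$th term for $i<m$ becomes $\mu_{m,i}\,p^{-\beta(m,i)}$ times the basis element $\hat u^{\nu_p(m!)-\nu_p(i!)+m-i}\left(\frac{\hat u}{p}\right)^{\nu_p(i!)}f_i$; since distinct $i$ give distinct basis elements, equating coefficients forces $p^{\beta(m,i)}\mid\mu_{m,i}$ directly, with no need to analyse the Kane components of $g_{m,i}$, the maps $\hat u\colon G_{\rho j,n}\to G_{\rho(j+1),n}$, or Adams filtrations. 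Without this step (or a completed version of your valuation computation that does not rely on Proposition~\ref{0} or Lemma~\ref{zonly}), the proposal does not prove the divisibility statement.
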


\begin{proof}
By Lemma~\ref{mhomotopy}, $\{g_{m,l}:0\leqslant l\leqslant m\}$ forms a basis for $G_{\rho m,*}$, so 
we can express $z_{m}$ in terms of this basis, 
    \begin{equation}\label{z in basis}
    z_{m}=\lambda_{m,m}g_{m,m}+
            \tilde{\lambda}_{m,m-1}g_{m,m-1}+\cdots
            +\tilde{\lambda}_{m,0}g_{m,0},
    \end{equation}
where $\lambda_{m,m},\tilde{\lambda}_{m,l}\in\Z_p$. The projection map $P_m:G_{\rho m,*}\rightarrow G_{\rho m,m}$ 
acts as the identity on $z_{m}$ and as zero on all $g_{m, l}$ where $m\neq l$ by Lemma~\ref{proj}. Hence
    $$
    z_{m}=P_m(z_{m})=\lambda_{m,m}P_m(g_{m,m})=\lambda_{m,m}P_m(f_{m}).
    $$
Thus the coefficient $\lambda_{m,m}$ is a unit, since otherwise $z_{m}$ would have a factor of $p$, contradicting its choice as a generator of $G_{\rho m,m}\cong\Z_p$. 

 We can now multiply by the largest power of $\frac{\hat{u}}{p}$ possible to leave the result still lying in $G_{*,m}$ and we get     
    $$
    \left(\frac{\hat{u}}{p}\right)^{\nu_p(m!)}z_{m}
    =\lambda_{m,m}P_m\left(\left(\frac{\hat{u}}{p}\right)^{\nu_p(m!)}f_{m}\right)
    $$ 
 which lies in $G_{\rho(\nu_p(m!)+m),m}$. By multiplying equation~\eqref{z in basis} by $\left(\frac{\hat{u}}{p}\right)^{\nu_p(m!)}$ we now have the following relation in $G_{\rho(\nu_p(m!)+m),m}\otimes\Q_p$
    \begin{equation}\label{coefficients}
    \left(\frac{\hat{u}}{p}\right)^{\nu_p(m!)}z_{m}=
    \left(\frac{\hat{u}}{p}\right)^{\nu_p(m!)}\lambda_{m,m}f_{m}+
    \sum_{i=0}^{m-1}\tilde{\lambda}_{m,i}\left(\frac{\hat{u}}{p}\right)^{\nu_p(m!)}g_{m,i}.
    \end{equation}

Since the left hand side of this equation lies in $G_{\rho(\nu_p(m!)+m),m}$, we can calculate how many factors of $p$ each 
$\tilde{\lambda}_{m,i}$ must have to ensure that, once the right hand side is expressed in terms of the basis in Theorem~\ref{basis}, all the coefficients are $p$-adic integers.

Using Lemma~\ref{alglem} we can see that if $m\leqslant \nu_p(i!)+i$ we have 
    $$
    \left(\frac{\hat{u}}{p}\right)^{\nu_p(m!)}g_{m,i}=\frac{1}{p^{\nu_p(m!)+m-\nu_p(i!)-i}}\hat{u}^{\nu_p(m!)-\nu_p(i!)+m-i}
    \left(\frac{\hat{u}}{p}\right)^{\nu_p(i!)}f_{i}
    $$ 
and so in equation~\eqref{coefficients} the coefficient $\tilde{\lambda}_{m,i}$ must be divisible by $p^{\nu_p(m!)+m-\nu_p(i!)-i}$.

Similarly, when $m>\nu_p(i!)+i$ we have 
    $$
    \left(\frac{\hat{u}}{p}\right)^{\nu_p(m!)}g_{m,i}=\frac{1}{p^{\nu_p(m!)}}\hat{u}^{\nu_p(m!)-\nu_p(i!)+m-i}
    \left(\frac{\hat{u}}{p}\right)^{\nu_p(i!)}f_{i}
    $$ 
and so in equation~\eqref{coefficients}, $\tilde{\lambda}_{m,i}$ must be divisible by $p^{\nu_p(m!)}$.
\end{proof}

\begin{prop}\label{0}
In Proposition~\ref{0 or 1}, $\tilde{z}_{n}$ is actually represented in $E_2^{0,\rho n}$.
\end{prop}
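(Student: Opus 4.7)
The plan is to rule out the $E_2^{1,\rho n+1}$ alternative of Proposition~\ref{0 or 1} by showing that $\tilde{z}_n$ has nonzero mod~$p$ Hurewicz image in $H_{\rho n}(\ell\wedge\K(n);\Z/p)$, using the standard fact that a class in $\pi_{*}$ has Adams filtration~$0$ if and only if it is detected in mod~$p$ homology.

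Using Proposition~\ref{z in basis elements} together with Lemma~\ref{proj}, one has $z_{n}=\lambda_{n,n}P_n(f_{n})$ in $G_{\rho n,n}$ with $\lambda_{n,n}\in\Z_p^{\times}$, so it suffices to show that $P_n(f_{n})\in\pi_{\rho n}(\ell\wedge\K(n))$ has nonzero mod~$p$ Hurewicz image. The stable class $H^{*}(\K(n))\approxeq\Sigma^{\rho n-\nu_p(n!)}I^{\nu_p(n!)}$ from Theorem~\ref{stable Kn} shows that $\K(n)$ is $(\rho n-1)$-connected with $H^{\rho n}(\K(n);\Z/p)\cong\Z/p$, so $\K(n)$ carries a bottom-cell map $S^{\rho n}\to\K(n)$ whose smash with the unit of $\ell$ yields a class $\tilde\iota_n\in\pi_{\rho n}(\ell\wedge\K(n))$ detecting the bottom generator of $H_{\rho n}(\ell\wedge\K(n);\Z/p)$, and hence of Adams filtration~$0$. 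One then checks that $\tilde\iota_n$ projects to a generator of $G_{\rho n,n}\cong\Z_p$, so up to a $p$-adic unit it agrees with $z_{n}$ modulo $W_{\rho n,n}$; since every element of $W_{\rho n,n}$ is itself in filtration~$0$, this forces $\tilde z_n$ to be in filtration~$0$ as required.

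An equivalent approach, avoiding an explicit bottom-cell identification, is a dimension count. By Proposition~\ref{p torsion} together with the injectivity of $c$-multiplication on $E_2^{s,s+\rho n}$ for $s\geq 1$, every element of $W_{\rho n,n}$ must be represented in $E_\infty^{0,\rho n}=E_2^{0,\rho n}$. Thus the two alternatives in Proposition~\ref{0 or 1} correspond to $\dim_{\Z/p}E_2^{0,\rho n}$ equalling either $\dim_{\Z/p}W_{\rho n,n}$ (filtration~$1$) or $1+\dim_{\Z/p}W_{\rho n,n}$ (filtration~$0$). Computing $E_2^{0,\rho n}\cong\Hom_B(H^{*}(\K(n)),\Z/p)_{\rho n}$ via the change-of-rings isomorphism and Theorem~\ref{stable Kn} would then settle the matter.

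The principal obstacle in either route is this explicit identification step: either tying $P_n(f_{n})$ to the bottom-cell class up to a unit, or carrying out the dimension count of $\Hom_B(H^{*}(\K(n)),\Z/p)_{\rho n}$ while tracking those projective $B$-summands of $H^{*}(\K(n))$ that are discarded by the stable isomorphism class but may nonetheless contribute to $\Hom_B$ in the relevant degree.
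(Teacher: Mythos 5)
Your proposal does not close the argument: in both routes the decisive step is exactly the one you defer (``the principal obstacle in either route is this explicit identification step''), and the surrounding reasoning has concrete flaws. First, Theorem~\ref{stable Kn} only gives the \emph{stable} class of $H^*(\K(n))$ as a $B$-module, i.e.\ its class modulo free $B$-summands; it cannot tell you that $\K(n)$ is $(\rho n-1)$-connected, nor that $H^{\rho n}(\K(n))\cong\Z/p$, nor can it compute $E_2^{0,\rho n}\cong\Hom_B(H^*(\K(n)),\Z/p)_{\rho n}$, precisely because free summands are invisible to the stable class but do contribute to $\Hom_B$ and to the $s=0$ line. Second, even granting a bottom-cell class $\tilde\iota_n$ of Adams filtration $0$ whose torsion-free coordinate is a unit multiple of $z_n$, your final inference fails: $\tilde z_n$ and $u^{-1}\tilde\iota_n$ differ by an element of $W_{\rho n,n}$, and the images in $E_\infty^{0,\rho n}$ of two filtration-zero classes can cancel, so their sum may lie in filtration $\geq 1$; ``every element of $W_{\rho n,n}$ has filtration $0$'' therefore does not force $\tilde z_n$ to have filtration $0$. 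Third, the dimension count of the second route is circular: $\dim_{\Z/p}W_{\rho n,n}$ is not known independently (it is read off from the very group $E_\infty^{0,\rho n}$ you would be computing), and nothing in the paper, nor in your sketch, supplies it.

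The paper's proof avoids all of this and uses no information about the actual cell structure of $\K(n)$ or about $W_{\rho n,n}$. One assumes $\tilde z_n$ is represented in $E_2^{1,\rho n+1}$ and exploits the multiplicative structure of the collapsed spectral sequence: away from $s=0$ the $E_2$-term is a shifted copy of $\Z/p[c,d]$, with $c$ and $d$ corresponding to multiplication by $p$ and by $\hat u$ on homotopy. Walking along the $s=1$ line from $c^{1+\nu_p(n!)}$ to $d^{1+\nu_p(n!)}$ produces a class $w$ with $\hat u^{1+\nu_p(n!)}\tilde z_n=p^{1+\nu_p(n!)}w$, i.e.\ a divisibility statement in $G_{*,*}$. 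This contradicts the coefficient analysis in the proof of Proposition~\ref{z in basis elements} (resting on Theorem~\ref{basis}: $\bigl(\tfrac{\hat u}{p}\bigr)^{\nu_p(n!)+1}f_{n}$ violates the integrality condition), so $\tilde z_n$ must be represented in $E_2^{0,\rho n}$. If you want to salvage your Hurewicz-image strategy, you would need genuine input about $\K(n)$ itself (from Kane's construction, not from the stable class) \emph{and} an argument pinning the bottom-cell class to a generator of $G_{\rho n,n}$ up to a unit --- which is essentially the same divisibility information the paper extracts from its explicit basis.
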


\begin{proof}
We will assume that $\tilde{z}_{n}$ is represented in $E_2^{1,\rho n+1}$ in the spectral sequence and obtain a contradiction. 
The spectral sequence in question collapses and, away from $s=0$, the $E_2$ page is  
    $$
    E_2^{s,t}\cong\Ext_B^{s+\nu_p(n!),t-\rho n+\nu_p(n!)}(\Z/p,\Z/p).
    $$ 
This is a shifted version of $\Ext_B^{*,*}(\Z/p,\Z/p)\cong\Z/p[c,d]$, so on the line $s=1$ the non-zero groups are $E_2^{1,\rho n+1}$, $E_2^{1,\rho(n+1)+1}$, $\ldots$, $E_2^{1,\rho(n+\nu_p(n!)+1)+1}$ and each of these is a single copy of $\Z/p$. Using the multiplicative structure of the spectral sequence, if there is a class $w\in E_2^{j,\rho(n+j)+1}$ and the group $E_2^{j,\rho(n+j+1)+1}$ is non-zero then there exists a class $w'\in E_2^{j,\rho(n+j+1)+1}$ such that $pw'=\hat{u}w$. In other words if $w$ is represented by $c^xd^y$ in the spectral sequence where $x\geqslant1$, $y\geqslant0$ then $w'$ is represented by $c^{x-1}d^{y+1}$. Applying  this to $\tilde{z}_{n}\in E_2^{1,\rho n+1}$, since  $E_2^{1,\rho(n+\nu_p(n!)+1)+1}$ is non-zero, there must exist a class $w\in E_2^{1,\rho(n+\nu_p(n!)+1)+1}$ such that     
    $$
    \hat{u}^{1+\nu_p(n!)}\tilde{z}_{n}=p^{1+\nu_p(n!)}w.
    $$ 
This implies that $\hat{u}^{1+\nu_p(n!)}\tilde{z}_{n}$ is divisible by $p^{1+\nu_p(n!)}$ in $G_{*,*}$. However this contradicts the proof of Proposition~\ref{z in basis elements}, hence $\tilde{z}_{n}$ must be represented in $E_2^{0,\rho n}$.
\end{proof}

\begin{lem}\label{zonly}
In the spectral sequence
    $$
    E_2^{s,t}\cong\Ext^{s,t}_B(H^*(\K(n)),\Z/p)\Longrightarrow\pi_{t-s}(\ell\wedge\K(n))\otimes\Z_p,
    $$
up to multiplication by a unit, $\left(\frac{\hat{u}}{p}\right)^i(p\tilde{z}_{n})$ is represented by $c^{\nu_p(n!)+1-i}d^i$ for $0\leqslant i\leqslant\nu_p(n!)$ and $\hat{u}^j\left(\frac{\hat{u}}{p}\right)^{\nu_p(n!)}(\tilde{z}_{n})$ is represented by $d^{\nu_p(n!)+j}$ for $j\geqslant1$.
\end{lem}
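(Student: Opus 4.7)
The plan is as follows. By Proposition~\ref{0}, $\tilde{z}_{n}$ is represented in $E_2^{0,\rho n}$ by a non-zero class, which I call $\xi$. The spectral sequence is a module over $\Ext_B^{*,*}(\Z/p,\Z/p)\cong\Z/p[c,d]$, and as already exploited in the proofs of Propositions~\ref{0 or 1} and~\ref{0}, multiplication by $p\in\pi_0(\ell)$ corresponds (up to units) to multiplication by $c$, while multiplication by $\hat{u}\in\pi_\rho(\ell)$ corresponds to multiplication by $d$.

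It follows that, for $a,b\geq 0$ with $a+b\geq 1$, the element $p^a\hat{u}^b\tilde{z}_{n}$ is represented by $c^ad^b\xi$. Under the isomorphism $E_2^{s,t}\cong\Ext_B^{s+\nu_p(n!),t-\rho n+\nu_p(n!)}(\Z/p,\Z/p)$ valid for $s>0$, this class becomes the unique non-zero generator $c^{a+\nu_p(n!)}d^b$ of the corresponding one-dimensional $\Ext$ group. In particular, $p\tilde{z}_{n}$ (the case $i=0$ of the first formula) is represented by $c^{\nu_p(n!)+1}$; the element $\hat{u}^i\tilde{z}_{n}$ is represented by $c^{\nu_p(n!)}d^i$; and $\hat{u}^{\nu_p(n!)+j}\tilde{z}_{n}$ is represented by $c^{\nu_p(n!)}d^{\nu_p(n!)+j}$.

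To deduce the stated identities, I rewrite $(\hat{u}/p)^i(p\tilde{z}_{n})=\hat{u}^i\tilde{z}_{n}/p^{i-1}$ for $1\leq i\leq\nu_p(n!)$; the divisibility of $\hat{u}^i\tilde{z}_{n}$ by $p^{i-1}$ follows from Proposition~\ref{z in basis elements}, which shows that $(\hat{u}/p)^{\nu_p(n!)}\tilde{z}_{n}$ already lies in $\pi_*$. Since multiplication by $c$ is injective on the relevant one-dimensional $\Ext$ groups for $s>0$, dividing by $p^{i-1}$ in homotopy removes a factor of $c^{i-1}$ from the representative, producing $c^{\nu_p(n!)+1-i}d^i$ as claimed. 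The second formula follows identically from $\hat{u}^j(\hat{u}/p)^{\nu_p(n!)}\tilde{z}_{n}=\hat{u}^{\nu_p(n!)+j}\tilde{z}_{n}/p^{\nu_p(n!)}$, whose representative $c^{\nu_p(n!)}d^{\nu_p(n!)+j}$ loses its full power of $c$ to yield $d^{\nu_p(n!)+j}$. The delicate point is this ``division removes a factor of $c$'' identification; it rests on the injectivity of $c$-multiplication away from $s=0$, together with Proposition~\ref{p torsion} bounding the torsion and hence ensuring that the representatives are uniquely determined modulo higher filtration.
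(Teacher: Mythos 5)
Your proposal is correct and follows essentially the same route as the paper: anchor the filtration of $\tilde{z}_{n}$ via Proposition~\ref{0}, use the correspondence of $c$ and $d$ with multiplication by $p$ and $\hat{u}$, exploit that each relevant $\Ext$ group is a single copy of $\Z/p$ on one monomial, and use injectivity of $c$-multiplication away from $s=0$ (together with the divisibility coming from Proposition~\ref{z in basis elements} and the torsion bound of Proposition~\ref{p torsion}) to identify the representatives after dividing by powers of $p$. The paper merely packages the same argument as a table of homotopy elements and representatives, starting from $p\tilde{z}_{n}$ represented by $c^{1+\nu_p(n!)}$, so no further comparison is needed.
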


\begin{proof}
From Proposition~\ref{0} we know that in the spectral sequence     
    $$
    E_2^{s,t}\cong\Ext^{s,t}_B(H^*(\K(n);\Z/p),\Z/p)\Longrightarrow\pi_{t-s}(\ell\wedge\K(n))\otimes\Z_p
    $$ 
$\tilde{z}_{n}$ is represented in $E_2^{0,\rho n}$. By the multiplicative structure of the spectral sequence this means that $p\tilde{z}_{n}$ is represented in $E_2^{1,\rho n+1}$. We have
    $$
    E_2^{1,\rho n+1}\cong\Ext_B^{1+\nu_p(n!),1+\nu_p(n!)}(\Z/p,\Z/p)
        \cong\Z/p\langle c^{1+\nu_p(n!)}\rangle.
    $$
We know from~\cite[Part III, Lemma 17.11]{AdamsSH} that in the spectral sequence, multiplication by $c$ and $d$ correspond to multiplication by $p$ and $\hat{u}$ respectively on homotopy groups. We list below some homotopy elements of $\pi_*(\ell\wedge\K(n))$ with a choice of corresponding representatives in the spectral sequence.
\begin{displaymath}
\begin{tabular}{c|c}
Homotopy element & Representative\\
\hline
&\\
$p\tilde{z}_{n}$ & $c^{1+\nu_p(n!)}$ \\
$\left(\frac{\hat{u}}{p}\right)(p\tilde{z}_{n})$ & $c^{\nu_p(n!)}d$ \\
$\left(\frac{\hat{u}}{p}\right)^2(p\tilde{z}_{n})$ & $c^{\nu_p(n!)-1}d^2$ \\
\vdots & \vdots \\
$\left(\frac{\hat{u}}{p}\right)^{\nu_p(n!)}(p\tilde{z}_{n})$ & $cd^{\nu_p(n!)}$ \\
$\hat{u}\left(\frac{\hat{u}}{p}\right)^{\nu_p(n!)}(\tilde{z}_{n})$ & $d^{\nu_p(n!)+1}$ \\
$\hat{u}^2\left(\frac{\hat{u}}{p}\right)^{\nu_p(n!)}(\tilde{z}_{n})$ & $d^{\nu_p(n!)+2}$ \\
\vdots & \vdots \\
\end{tabular}
\end{displaymath}
From this table it is clear to see that the descriptions given in the statement are correct.
\end{proof}

Recall from Definition~\ref{iota} the maps $$\iota_{m,n}:\ell\wedge\K(m)\rightarrow \ell\wedge\K(n)$$ which were maps represented in the spectral sequence
    \begin{align*}
    E_2^{s,t}\cong\Ext_B^{s,t}(H^*(D(\K(m)))&\otimes H^*(\K(n)),\Z/p)\\
    &\Longrightarrow\pi_{t-s}(D(\K(m))\wedge\K(n)\wedge \ell)\otimes\Z_p
    \end{align*}
by a choice of generator of 
    $$
    E_2^{m-n-\nu_p(n!)+\nu_p(m!),m-n-\nu_p(n!)+\nu_p(m!)}.
    $$ 

\begin{prop}\label{iota on z}
For $m>n$, the map induced in the $(\rho m)$th homotopy group $$(\iota_{m,n})_*:G_{\rho m,m}\rightarrow G_{\rho m, n}$$ satisfies the condition 
    $$
    (\iota_{m,n})_*(z_{m})=\mu_{m,n}p^{\nu_p(m!)-\nu_p(n!)}\hat{u}^{m-n}z_{n}
    $$ 
for some $p$-adic unit $\mu_{m,n}$.
\end{prop}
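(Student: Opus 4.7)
The plan is to exploit the multiplicative structure of the Adams spectral sequence to determine the Adams filtration of $(\iota_{m,n})_*(z_m)$ and then translate the resulting representative back into a homotopy formula using Lemma~\ref{zonly}. The three key inputs are: by Definition~\ref{iota}, $\iota_{m,n}$ is represented by a generator of $E_2^{s(m,n),s(m,n)}$ with $s(m,n)=m-n-\nu_p(n!)+\nu_p(m!)$; by Proposition~\ref{0}, a lift $\tilde z_m$ of $z_m$ is represented in Adams filtration~$0$; and by multiplicativity of the pairing $[\K(m),\ell\wedge\K(n)]\otimes\pi_*(\ell\wedge\K(m))\to\pi_*(\ell\wedge\K(n))$, $(\iota_{m,n})_*(\tilde z_m)$ is represented in $E_2^{s(m,n),\,s(m,n)+\rho m}$ of the spectral sequence converging to $\pi_*(\ell\wedge\K(n))\otimes\Z_p$. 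A direct calculation using the identification $E_2^{s,t}\cong\Ext_B^{s+\nu_p(n!),\,t-\rho n+\nu_p(n!)}(\Z/p,\Z/p)$ (valid for $s>0$, which holds since $s(m,n)\geqslant m-n\geqslant 1$) shows that this group is generated by $c^{\nu_p(m!)}d^{m-n}$.

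Next, I would extend Lemma~\ref{zonly} to the statement that, for $a\geqslant 0$ and $b\geqslant 1$, the class $c^{a+\nu_p(n!)}d^b$ represents $p^a\hat{u}^b\tilde z_n$ up to a $p$-adic unit. This follows by induction from the base case that $c\cdot\tilde z_n$, representing $p\tilde z_n$, corresponds under the identification above to $c^{1+\nu_p(n!)}$, together with the fact that multiplication by $c$ and $d$ on the spectral sequence corresponds to multiplication by $p$ and $\hat{u}$ on homotopy. Since $m>n$ forces $\nu_p(m!)\geqslant\nu_p(n!)$, the case $a=\nu_p(m!)-\nu_p(n!)$ and $b=m-n$ of this formula shows that $c^{\nu_p(m!)}d^{m-n}$ represents $p^{\nu_p(m!)-\nu_p(n!)}\hat{u}^{m-n}\tilde z_n$ up to a unit.

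Finally, to pass from $\tilde z_n$ to $z_n$ and to $G_{\rho m,n}$: since $\tilde z_n=z_n+w$ with $w\in W_{\rho n,n}$ a $p$-torsion element by Proposition~\ref{p torsion}, the element $p^{\nu_p(m!)-\nu_p(n!)}\hat{u}^{m-n}\tilde z_n$ equals $p^{\nu_p(m!)-\nu_p(n!)}\hat{u}^{m-n} z_n$ modulo torsion. Any residual indeterminacy from higher Adams filtration contributes an element of $pG_{\rho m,n}$, which only perturbs the $p$-adic unit $\mu_{m,n}$ by a factor in $1+p\Z_p\subset\Z_p^\times$. I expect the main technical subtlety to be keeping careful track of the two different shifts — the $\nu_p(n!)$ shift built into the identification of $E_2^{s,t}$ with $\Ext_B^{*,*}$, and the relationship between the internal filtration of $\iota_{m,n}$ and the filtration in which its action on $\tilde z_m$ is detected — but once these correspondences are in place the result follows.
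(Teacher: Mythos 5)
Your overall strategy is the same as the paper's (pair the Adams filtration class of $\iota_{m,n}$ with that of a lift of $z_m$, then translate back to homotopy via Lemma~\ref{zonly}), and your identification of the target group $E_2^{s(m,n),s(m,n)+\rho m}\cong\Z/p\{c^{\nu_p(m!)}d^{m-n}\}$ and your reformulation of Lemma~\ref{zonly} are both correct. However, there is a genuine gap at the crucial step: from multiplicativity of the pairing you may only conclude that $(\iota_{m,n})_*(\tilde z_m)$ has Adams filtration \emph{at least} $s(m,n)$, with its image in $E_\infty^{s(m,n),\ast}$ equal to the product of the representing classes. You assert that it is \emph{detected} there, i.e.\ that this product is nonzero, but you give no argument, and none is available from the results you invoke: $\tilde z_m$ is represented on the $s=0$ line, where the identification $E_2^{s,t}\cong\Ext_B^{s+\nu_p(n!),t-\rho n+\nu_p(n!)}(\Z/p,\Z/p)$ and the analysis of the $\Ext$ pairing (Proposition~\ref{extprod}) are only valid for $s>0$; the group $E_2^{0,\rho m}$ is a $\Hom_B$ group that also carries the torsion classes $W_{\rho m,m}$, so the pairing of $[\iota_{m,n}]$ with the (choice-dependent) class of $\tilde z_m$ could a priori vanish. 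If it did vanish, your ``residual indeterminacy'' remark would not rescue the argument: an element of strictly higher filtration would carry an extra factor of $p$, not merely a unit in $1+p\Z_p$, and the stated power $p^{\nu_p(m!)-\nu_p(n!)}$ would be wrong.

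This is exactly the point the paper's proof is designed to handle: it works with $p\tilde z_m=pz_m$ instead of $\tilde z_m$. By Lemma~\ref{zonly} this element is represented by $c^{1+\nu_p(m!)}$ in $E_2^{1,\rho m+1}$, i.e.\ in the region $s>0$ where every nonzero group is a single copy of $\Z/p$ and the Yoneda/composition pairing of generators is a generator; hence the product $c^{1+\nu_p(m!)}d^{m-n}$ is visibly nonzero, detection in filtration $1+s(m,n)$ is guaranteed, and dividing by $p$ (legitimate since the ambient groups are torsion-free after passing to $G_{*,*}$, and $p\tilde z_m=pz_m$ kills the torsion ambiguity in $\tilde z_m$) gives the statement. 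To repair your write-up you should either adopt this device, or supply the missing nonvanishing by an auxiliary argument such as: $c\cdot\bigl([\iota_{m,n}]\cdot[\tilde z_m]\bigr)=[\iota_{m,n}]\cdot[p\tilde z_m]$ is the nonzero class $c^{1+\nu_p(m!)}d^{m-n}$, hence $[\iota_{m,n}]\cdot[\tilde z_m]\neq 0$ --- but note that this again reduces to the computation with $p\tilde z_m$, i.e.\ to the paper's argument.
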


\begin{proof}
By definition, $\tilde{z}_{m}$ is any element in $G_{\rho m,m}\oplus W_{\rho m,m}$ whose first co-ordinate is $z_{m}$. Also $W_{\rho m,m}$ has torsion of order $p$ at the highest by Proposition~\ref{p torsion}. We will prove the analogous result for the element $p\tilde{z}_{m}=pz_{m}$; then by linearity the required result will be true for $z_{m}$.

By Lemma~\ref{zonly}, in the spectral sequence     
    $$
    E_2^{s,t}\cong\Ext^{s,t}_B(H^*(\K(m)),\Z/p)\Longrightarrow\pi_{t-s}(\ell\wedge\K(m))\otimes\Z_p,
    $$ 
$p\tilde{z}_{m}$ is represented in $E_2^{1,\rho m+1}\cong\Ext_B^{1+\nu_p(m!),1+\nu_p(m!)}(\Z/p,\Z/p)$, up to a unit, by $c^{1+\nu_p(m!)}$.

Recall that in the spectral sequence
    \begin{align*}
    E_2^{s,t}\cong\Ext_B^{s,t}(H^*(D(\K(m));\Z/p)&\otimes H^*(\K(n);\Z/p),\Z/p)\\
            &\Longrightarrow\pi_{t-s}(D(\K(m))\wedge\K(n)\wedge \ell)\otimes\Z_p,
    \end{align*}
the maps $\iota_{m,n}:\ell\wedge\K(m)\rightarrow \ell\wedge\K(n)$ are represented in
\begin{align*}
E_2^{m-n-\nu_p(n!)+\nu_p(m!),m-n-\nu_p(n!)+\nu_p(m!)}&\cong\Ext_B^{m-n,(m-n)(\rho+1)}(\Z/p,\Z/p)\\
&\cong\Z/p\langle d^{m-n}\rangle.
\end{align*}

Using the pairing of Ext groups described in the proof of Proposition~\ref{extprod},
    $$
    \Ext^{s,t}(\Sigma^aI^b,\Z/p)\otimes \Ext^{s',t'}(\Sigma^{a'}I^{b'},\Z/p)\rightarrow\Ext^{s+s',t+t'}(\Sigma^{a+a'}I^{b+b'},\Z/p)
    $$ 
we get an induced pairing on the $E_2$ pages of the respective Adams spectral sequences. Since in all cases the spectral sequences collapse this passes to the $E_\infty$ pages. The pairing also respects filtrations, so the Ext group pairing passes to a pairing of spectral sequences, giving us a map
    \small\begin{align*}
    \Ext_B^{s,t}(H^*(D(\K(m)))\otimes H^*(\K(n)),\Z/p)&\otimes\Ext^{s',t'}_B(H^*(\K(m)),\Z/p)\\
    &\rightarrow\Ext_B^{s+s',t+t'}(H^*(\K(n)),\Z/p).
    \end{align*}\normalsize
This shows that $(\iota_{m,n})_*(p\tilde{z}_{m})$ is represented in the spectral sequence     
    $$
    E_2^{s,t}\cong\Ext_B^{s,t}(H^*(\K(n)),\Z/p)\Longrightarrow\pi_{t-s}(\ell\wedge\K(n))\otimes\Z_p
    $$ 
by a generator of
    \begin{align*}
    &E_2^{1+m-n-\nu_p(n!)+\nu_p(m!),\rho m+1+m-n-\nu_p(n!)+\nu_p(m!)}\\
    &\quad\cong\Ext_B^{1+m-n-\nu_p(n!)+\nu_p(m!),\rho m+1+m-n-\nu_p(n!)+\nu_p(m!)}(H^*(\K(n)),\Z/p)\\
    &\quad\cong\Ext_B^{1+m-n-\nu_p(n!)+\nu_p(m!),\rho m+1+m-n-\nu_p(n!)+\nu_p(m!)}(\Sigma^{\rho n-\nu_p(n!)}I^{\nu_p(n!)},\Z/p)\\
    &\quad\cong\Ext_B^{1+m-n+\nu_p(m!),\rho m+1+m-n+\nu_p(m!)-\rho n}(\Z/p,\Z/p)\\
    &\quad\cong\Ext_B^{1+m-n+\nu_p(m!),1+(\rho+1)(m-n)+\nu_p(m!)}(\Z/p,\Z/p)\\
    &\quad\cong\Z/p\langle c^{1+\nu_p(m!)}d^{m-n}\rangle.
    \end{align*}
Thus $(\iota_{m,n})_*(p\tilde{z}_{m})$ is, up to a unit, represented by $c^{1+\nu_p(m!)}d^{m-n}$ and all that remains is to express this element in terms of $p\tilde{z}_{n}$.

Using Lemma~\ref{zonly} we can see that we have two cases for $(\iota_{m,n})_*(p\tilde{z}_{m})$, either the power of $d$ in its representative is at least $\nu_p(n!)+1$ (and hence the power of $c$ in its representative is zero) or not.

In the first case we have $m-n\geqslant \nu_p(n!)+1$. Then by Lemma \ref{zonly}, $d^{m-n}$ represents     
    $$
    \left(\frac{\hat{u}}{p}\right)^{\nu_p(n!)}\hat{u}^{m-n-\nu_p(n!)}\tilde{z}_{n}=p^{-\nu_p(n!)}\hat{u}^{m-n}\tilde{z}_{n}.
    $$ 
This implies that up to a $p$-adic unit, $(\iota_{m,n})_*(p\tilde{z}_{m})$ is equal to     
    $$
    p^{1+\nu_p(m!)}p^{-\nu_p(n!)}\hat{u}^{m-n}\tilde{z}_{n}=p^{\nu_p(m!)-\nu_p(n!)}\hat{u}^{m-n}(p\tilde{z}_{n}).
    $$

In the second case we have $m-n<\nu_p(n!)+1$. Hence by Lemma~\ref{zonly} the representative is $c^{1+\nu_p(n!)-m+n}d^{m-n}$ and this represents the homotopy element 
    $$
    \left(\frac{\hat{u}}{p}\right)^{m-n}(p\tilde{z}_{n}).
    $$ 
This gives us that up to a $p$-adic unit, $(\iota_{m,n})_*(p\tilde{z}_{m})$ is equal to
    $$
    p^{1+\nu_p(m!)-(1+\nu_p(n!)-m+n)}\left(\frac{\hat{u}}{p}\right)^{m-n}(p\tilde{z}_{n})
    =p^{\nu_p(m!)-\nu_p(n!)}\hat{u}^{m-n}(p\tilde{z}_{n}).\qedhere
    $$
\end{proof}

\section{The matrix of $1\wedge \Psi^q$}
\label{Secmatrix}

In this section we study the matrix corresponding to the map $1\wedge\Psi^q:\ell\wedge\ell\rightarrow\ell\wedge\ell$ under the isomorphism $\Lambda$ of Theorem~\ref{iso}. Firstly information on the form of this matrix
is obtained by comparing the effect of the maps $(\iota_{m,n})_*$
on the basis elements $z_{m}$ with the effect of the induced map $(1\wedge\Psi^q)_*$. 
Then it is shown that, by altering $\Lambda$ by a conjugation, the matrix can be given a particularly nice and simple
form.

A remark about our notation is in order. We are following~\cite{CCW2} in denoting by $\Psi^q$ the $\ell$ Adams operation which acts on 
$\pi_{2(p-1)k}(\ell)$ as multiplication by $q^{(p-1)k}=\hat{q}^k$.  Some authors write $\Psi^{\hat{q}}$ for this operation.
Our choice in~\cite{CCW2} was motivated by wanting to compare directly the $ku$ operations with the $\ell$ ones.
But of course the $l$ operation only depends on $\hat{q}$. 

\subsection{The effect of $1\wedge\Psi^q$ on the basis}

\begin{lem}\label{action on f}
For $m\geqslant1$,
    $$
    (1\wedge\Psi^q)_*(f_{m})=\hat{q}^mf_{m}+p^{\nu_p(m)}\hat{u}f_{m-1}.
    $$ 
\end{lem}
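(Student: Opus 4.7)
The plan is a direct computation carried out on the rationalised ring $\Q_p[\hat{u},\hat{v}]$ into which $\pi_*(\ell\wedge\ell)/\mathrm{Torsion}$ embeds.

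First I would pin down the effect of $1\wedge\Psi^q$ on generators. Since $\Psi^q$ acts on $\pi_{\rho k}(\ell)$ as multiplication by $\hat{q}^k$, the map $1\wedge\Psi^q$ applied to $\pi_*(\ell\wedge\ell)\tp\Q_p=\Q_p[\hat{u},\hat{v}]$ fixes $\hat{u}$ (the left factor) and sends $\hat{v}\mapsto\hat{q}\hat{v}$. Thus it suffices to compute $(1\wedge\Psi^q)_*(c_m)$ and then rescale by $p^{\nu_p(m!)}$.

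Next I would evaluate the numerator of $(1\wedge\Psi^q)_*(c_m)$ by pulling out a common factor of $\hat{q}$:
\begin{equation*}
\prod_{i=0}^{m-1}(\hat{q}\hat{v}-\hat{q}^i\hat{u})=\hat{q}^m\prod_{i=0}^{m-1}(\hat{v}-\hat{q}^{i-1}\hat{u})=\hat{q}^m(\hat{v}-\hat{q}^{-1}\hat{u})\prod_{j=0}^{m-2}(\hat{v}-\hat{q}^j\hat{u}).
\end{equation*}
The key algebraic move is to write
\begin{equation*}
\hat{v}-\hat{q}^{-1}\hat{u}=(\hat{v}-\hat{q}^{m-1}\hat{u})+\hat{q}^{-1}\hat{u}(\hat{q}^m-1),
\end{equation*}
which splits the product into one piece that completes to the numerator of $c_m$ and a second piece involving the numerator of $c_{m-1}$.

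Dividing by the denominator $\prod_{i=0}^{m-1}(\hat{q}^m-\hat{q}^i)$, the first piece contributes $\hat{q}^m c_m$. For the second piece, I would compare
\begin{equation*}
\prod_{i=0}^{m-1}(\hat{q}^m-\hat{q}^i)=\hat{q}^{m(m-1)/2}\prod_{k=1}^m(\hat{q}^k-1),\qquad \prod_{i=0}^{m-2}(\hat{q}^{m-1}-\hat{q}^i)=\hat{q}^{(m-1)(m-2)/2}\prod_{k=1}^{m-1}(\hat{q}^k-1),
\end{equation*}
to see that the spurious factor $\hat{q}^{m-1}(\hat{q}^m-1)$ cancels exactly, yielding $\hat{u}\,c_{m-1}$. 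Hence
\begin{equation*}
(1\wedge\Psi^q)_*(c_m)=\hat{q}^m c_m+\hat{u}\,c_{m-1}.
\end{equation*}

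Finally, multiplying through by $p^{\nu_p(m!)}$ and using the identity $\nu_p(m!)-\nu_p((m-1)!)=\nu_p(m)$ converts this immediately into the claimed formula for $f_m$. The manipulation is entirely algebraic; there is no genuine obstacle, only the bookkeeping of powers of $\hat{q}$ and $p$, and the small trick of decomposing $\hat{v}-\hat{q}^{-1}\hat{u}$ so as to recognise the two neighbouring $c$-polynomials.
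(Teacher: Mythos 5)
Your proposal is correct and takes essentially the same route as the paper: both establish $(1\wedge\Psi^q)_*(c_m)=\hat{q}^m c_m+\hat{u}\,c_{m-1}$ from the fact that $\hat{u}$ is fixed and $\hat{v}\mapsto\hat{q}\hat{v}$, then multiply by $p^{\nu_p(m!)}$ and use $\nu_p(m!)-\nu_p((m-1)!)=\nu_p(m)$. The only difference is that you spell out the ``straightforward calculation'' on $c_m$ that the paper leaves implicit, and your bookkeeping (the decomposition of $\hat{v}-\hat{q}^{-1}\hat{u}$ and the cancellation of $\hat{q}^{m-1}(\hat{q}^m-1)$ against the ratio of denominators) checks out.
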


\begin{proof}
Using that the map $(1\wedge\Psi^q)_*$ fixes $\hat{u}$, multiplies $\hat{v}$ by $\hat{q}$ and is additive and multiplicative,
a straightforward calculation gives
    $$
    (1\wedge\Psi^q)_*(c_{m})=\hat{q}^mc_{m}+\hat{u}c_{m-1}.
    $$
Then
     \begin{align*}
        (1\wedge\Psi^q)_*(f_{m})&=\hat{q}^mf_{m}+p^{\nu_p(m!)-\nu_p((m-1)!)}\hat{u}f_{m-1}\\
        &=\hat{q}^mf_{m}+p^{\nu_p(m)}\hat{u}f_{m-1}.\qedhere
\end{align*}
\end{proof}

\begin{prop}\label{action on g}
The action of $(1\wedge\Psi^q)_*$ on the basis elements is as follows.
    \begin{displaymath}
            (1\wedge\Psi^q)_*(g_{m,m})=\left\{\begin{array}{ll}
                \hat{q}^mg_{m,m}+p^{\nu_p(m)+1}g_{m,m-1}& \textrm{ if } m>p,\\
                \hat{q}^mg_{m,m}+pg_{m,m-1} & \textrm{ if } m=p,\\
                \hat{q}^mg_{m,m}+g_{m,m-1} & \textrm{ if } 1\leqslant m\leqslant p-1,\\
                g_{0,0} & \textrm{ if } m=0.
                                    \end{array}\right.
    \end{displaymath}
And for $m>n$,
\small\begin{multline*}
(1\wedge\Psi^q)_*(g_{m,n})\\
=\begin{cases}
\hat{q}^ng_{m,n}+g_{m,n-1}& \text{ if } m>\nu_p(n!)+n,\\
\hat{q}^ng_{m,n}+p^{\nu_p(n!)+n-m}g_{m,n-1}& \text{ if } \nu_p((n-1)!)+n-1<m\leqslant \nu_p(n!)+n,\\
\hat{q}^ng_{m,n}+p^{\nu_p(n)+1}g_{m,n-1}& \text{ if } m\leqslant \nu_p((n-1)!)+n-1.
\end{cases}
\end{multline*}
\normalsize
\end{prop}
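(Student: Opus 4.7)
The plan is to exploit the fact that $(1\wedge\Psi^q)_*$ is a $\pi_*(\ell)$-module map, hence acts trivially on $\hat u$ and on $\hat u/p$. Thus for any basis element of the form $g_{m,n}=\hat u^a(\hat u/p)^b f_n$, we have
$$
(1\wedge\Psi^q)_*(g_{m,n})=\hat u^a\left(\tfrac{\hat u}{p}\right)^b(1\wedge\Psi^q)_*(f_n).
$$
Applying Lemma~\ref{action on f} to $f_n$ splits the right hand side into two pieces: one which is $\hat q^n$ times the monomial times $f_n$, and hence recombines to $\hat q^n g_{m,n}$; and a second piece $p^{\nu_p(n)}\hat u^{a+1}(\hat u/p)^b f_{n-1}$ which must be rewritten as a scalar multiple of $g_{m,n-1}$.

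The remaining work is the bookkeeping of the $p$-powers, and this forces the case split appearing in the statement. Writing $\hat u^a(\hat u/p)^b=p^{-b}\hat u^{a+b}$ and comparing with $g_{m,n-1}=p^{-\beta(m,n-1)+\text{shift}}\hat u^{m-n+1}f_{n-1}$ (using Definition~\ref{gml}) reduces to understanding whether $m$ exceeds $\nu_p(n!)+n$ (so $g_{m,n}$ uses the ``$\hat u$'' form) and whether $m$ exceeds $\nu_p((n-1)!)+n-1$ (so $g_{m,n-1}$ does). Three sub-cases arise, and direct comparison of exponents yields, in turn, the coefficients $1$, $p^{\nu_p(n!)+n-m}$, and $p^{\nu_p(n)+1}$ claimed. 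In particular the identity $\nu_p(n!)=\nu_p((n-1)!)+\nu_p(n)$ is the key arithmetic input making the first sub-case come out cleanly to coefficient $1$.

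For the diagonal entries, one first notes that $g_{m,m}=f_m$ in all cases, so Lemma~\ref{action on f} gives
$$
(1\wedge\Psi^q)_*(g_{m,m})=\hat q^m g_{m,m}+p^{\nu_p(m)}\hat u f_{m-1}.
$$
Rewriting $p^{\nu_p(m)}\hat u f_{m-1}$ in terms of $g_{m,m-1}$ requires knowing which form of the definition applies to $g_{m,m-1}$. Since $\nu_p((m-1)!)+m-1\geq m$ precisely when $m-1\geq p$, we get $g_{m,m-1}=(\hat u/p)f_{m-1}$ for $m>p$ (giving coefficient $p^{\nu_p(m)+1}$) and $g_{m,m-1}=\hat u f_{m-1}$ for $m\leq p$ (giving coefficient $p^{\nu_p(m)}$, which equals $p$ when $m=p$ and $1$ when $1\leq m\leq p-1$). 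The case $m=0$ is trivial since $g_{0,0}=f_0=1$.

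The main obstacle is not any conceptual difficulty but rather ensuring that the case analysis is complete and consistent: one must verify in each sub-case that the exponent of $\hat u$ and the power of $p$ in $p^{\nu_p(n)}\hat u^{a+1}(\hat u/p)^b f_{n-1}$ and in the chosen form of $g_{m,n-1}$ match up after cancellation, which is why the split at $m=\nu_p((n-1)!)+n-1$ appears in the off-diagonal statement.
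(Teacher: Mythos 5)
Your proposal is correct and follows essentially the same route as the paper, whose proof is exactly the "straightforward case-by-case calculation" from Definition~\ref{gml} and Lemma~\ref{action on f} that you carry out: apply the lemma to the $f_n$ factor, use $\ell$-linearity on the monomial in $\hat{u}$ and $\hat{u}/p$, and compare $p$-exponents in the two forms of $g_{m,n}$ and $g_{m,n-1}$ via $\nu_p(n!)=\nu_p((n-1)!)+\nu_p(n)$. The case boundaries you identify (at $m=\nu_p(n!)+n$ and $m=\nu_p((n-1)!)+n-1$, and $m>p$ versus $m\leqslant p$ on the diagonal) are exactly the right ones and yield the stated coefficients.
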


\begin{proof}
This is a matter of straightforward case-by-case calculation, using Definition~\ref{gml} and Lemma~\ref{action on f}.
\end{proof}

\subsection{The Coefficients of the Matrix}

Let $A\in U_\infty\Z_p$ be the matrix such that $\Lambda(A)=1\wedge\Psi^q$. The main result to be proved in this section
provides some restrictions on the form of the matrix $A$, by comparing actions on the basis elements $z_{m}$ from Proposition~\ref{z in basis elements}. 

The following lemma is needed in the proof.
It follows easily from the definitions. 

\begin{lem}\label{lower g}
\begin{equation*}
\hat{u}^{m-n}g_{n,i}=\begin{cases}
p^{m-n}g_{m,i} & \textrm{ if } n\leqslant m\leqslant \nu_p(i!)+i,\\
p^{\nu_p(i!)-n+i}g_{m,i} & \textrm{ if } n\leqslant \nu_p(i!)+i<m,\\
g_{m,i} & \textrm{ if } \nu_p(i!)+i<n\leqslant m. 
\end{cases}
\end{equation*}\qed 
\end{lem}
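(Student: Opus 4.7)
The proof will be a direct verification by case analysis, tracking how the two-case definition of $g_{m,l}$ in Definition~\ref{gml} interacts with the multiplication by $\hat{u}^{m-n}$. The three cases in the statement correspond exactly to the three possible combinations of which clause of Definition~\ref{gml} is used for $g_{n,i}$ and which is used for $g_{m,i}$, given the constraint $n\leq m$.

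Concretely, I would proceed as follows. First, unpack the definition: $g_{n,i}$ equals $\left(\tfrac{\hat u}{p}\right)^{n-i}f_i$ when $n\leq \nu_p(i!)+i$ and equals $\hat u^{n-i-\nu_p(i!)}\left(\tfrac{\hat u}{p}\right)^{\nu_p(i!)}f_i$ when $n>\nu_p(i!)+i$, and similarly for $g_{m,i}$. Then consider the three cases:

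\begin{itemize}
\item[(i)] If $n\leq m\leq \nu_p(i!)+i$, both $g_{n,i}$ and $g_{m,i}$ use the first clause. Multiplying $g_{n,i}=\left(\tfrac{\hat u}{p}\right)^{n-i}f_i$ by $\hat u^{m-n}=p^{m-n}\left(\tfrac{\hat u}{p}\right)^{m-n}$ gives $p^{m-n}\left(\tfrac{\hat u}{p}\right)^{m-i}f_i=p^{m-n}g_{m,i}$.
\item[(ii)] If $n\leq \nu_p(i!)+i<m$, then $g_{n,i}$ uses the first clause but $g_{m,i}$ uses the second. Write $\hat u^{m-n}=\hat u^{m-i-\nu_p(i!)}\cdot \hat u^{\nu_p(i!)+i-n}$ and absorb $\hat u^{\nu_p(i!)+i-n}=p^{\nu_p(i!)+i-n}\left(\tfrac{\hat u}{p}\right)^{\nu_p(i!)+i-n}$ to reach the form of the second clause of $g_{m,i}$; checking the exponents gives the factor $p^{\nu_p(i!)-n+i}$.
\item[(iii)] If $\nu_p(i!)+i<n\leq m$, both use the second clause, and directly $\hat u^{m-n}\cdot\hat u^{n-i-\nu_p(i!)}=\hat u^{m-i-\nu_p(i!)}$ converts $g_{n,i}$ into $g_{m,i}$.
\end{itemize}

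There is no genuine obstacle here — only bookkeeping of exponents of $\hat u$ and $p$ — so the only care needed is to check that the rewriting in case~(ii) is legal in the sense that we never divide by $p$ more times than allowed (which is automatic since we are working in $\Q_p[\hat u,\hat v]$ and the final expression manifestly lies in $\Z_p[\tfrac{\hat u}{p},\tfrac{\hat v}{p}]$ by construction of $g_{m,i}$). This completes the verification.
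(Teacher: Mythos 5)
Your verification is correct: the three cases match exactly the combinations of the two clauses in Definition~\ref{gml}, and the exponent bookkeeping in each case checks out. This is precisely the routine unpacking the paper has in mind when it says the lemma follows easily from the definitions, so your argument coincides with the intended proof.
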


\begin{prop}\label{Aform}
The matrix $A$ corresponding to the map $1\wedge\Psi^q:\ell\wedge\ell\rightarrow\ell\wedge\ell$ under the isomorphism $\Lambda$ has the form
    \begin{displaymath}
        A=\left(\begin{array}{cccccc}
            1 & \upsilon_0 & a_{0,2} & a_{0,3} & a_{0,4} & \cdots\\
            0 & \hat{q}     & \upsilon_1   & a_{1,3} & a_{1,4} & \cdots\\
            0 & 0     & \hat{q}^2     & \upsilon_2   & a_{2,4} & \cdots\\
            0 & 0     & 0       & \hat{q}^3     & \upsilon_3   & \cdots\\
                \vdots & \vdots & \vdots & \vdots & \vdots & \ddots
                \end{array}\right),
        \end{displaymath}
where $\upsilon_i\in\Z_p^\times$ for all $i\geqslant 0$ and $a_{i,j}\in\Z_p$ for all $i,j\geqslant 0$.
\end{prop}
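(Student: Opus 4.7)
The plan is to determine the entries of $A$ by computing $(1\wedge\Psi^q)_*(z_m)$ in $G_{\rho m,*}$ in two different ways and comparing.  On the summand $\ell\wedge\K(m)$ of Kane's splitting, $\Lambda(A)$ restricts to $\sum_{n\leqslant m}A_{n,m}\iota_{m,n}$, so Proposition~\ref{iota on z} together with $\iota_{m,m}=\id$ yields
    $$
    (1\wedge\Psi^q)_*(z_m)=A_{m,m}z_m+\sum_{n<m}A_{n,m}\mu_{m,n}p^{\nu_p(m!)-\nu_p(n!)}\hat{u}^{m-n}z_n.
    $$
Substituting the expressions for $z_n$ from Proposition~\ref{z in basis elements} and using Lemma~\ref{lower g} to rewrite each $\hat{u}^{m-n}g_{n,i}$ as an explicit $p$-multiple of $g_{m,i}$, the right-hand side becomes a $\Z_p$-linear combination of the basis $\{g_{m,i}:0\leqslant i\leqslant m\}$ of $G_{\rho m,*}$ in which the unknowns $A_{n,m}$ appear linearly.

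Independently, I would expand $(1\wedge\Psi^q)_*(z_m)$ directly: rewrite $z_m$ in the same basis via Proposition~\ref{z in basis elements} and apply Proposition~\ref{action on g} term by term.  Because $(1\wedge\Psi^q)_*(g_{m,j})=\hat{q}^j g_{m,j}+(\mathrm{coeff})g_{m,j-1}$, this also produces an explicit $\Z_p$-linear combination of the $g_{m,i}$.  Matching coefficients of $g_{m,m}$ is immediate: on the first side the only contribution is $A_{m,m}\lambda_{m,m}$, since each $\hat{u}^{m-n}z_n$ with $n<m$ lies in the span of $\{g_{m,i}:i<m\}$; on the second side it is $\hat{q}^m\lambda_{m,m}$.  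Since $\lambda_{m,m}\in\Z_p^\times$, we conclude $A_{m,m}=\hat{q}^m$, which also confirms membership in $U_\infty\Z_p$ as $\hat{q}=q^{p-1}\equiv 1\pmod{p}$.

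For the super-diagonal I would compare coefficients of $g_{m,m-1}$.  On the first side only the terms $n=m$ and $n=m-1$ can contribute, the latter via $\hat{u}\,g_{m-1,m-1}$, which is handled by Lemma~\ref{lower g}.  A short case analysis on whether $m<p$, $m=p$, or $m>p$ shows that in each regime the power of $p$ arising as $p^{\nu_p(m!)-\nu_p((m-1)!)}$ times the multiplier from Lemma~\ref{lower g} coincides with $p^{\beta(m,m-1)}$, which is also the common power of $p$ on the second side.  After cancelling this, the equation reduces uniformly to
    $$
    A_{m-1,m}\,\mu_{m,m-1}\,\lambda_{m-1,m-1}=\lambda_{m,m}-\lambda_{m,m-1}\hat{q}^{m-1}(\hat{q}-1).
    $$
Since $\hat{q}-1\in p\Z_p$, the second term on the right lies in $p\Z_p$, while $\lambda_{m,m}$ is a unit; as $\mu_{m,m-1}$ and $\lambda_{m-1,m-1}$ are also units, the entry $\upsilon_{m-1}:=A_{m-1,m}$ is a $p$-adic unit.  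The remaining entries $a_{i,j}$ lie in $\Z_p$ automatically because $A\in U_\infty\Z_p$.

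The main obstacle is bookkeeping rather than ideas: the various $p$-adic valuations---the exponents $\beta(m,\cdot)$ from Proposition~\ref{z in basis elements}, the factor $p^{\nu_p(m!)-\nu_p(n!)}$ of Proposition~\ref{iota on z}, the three cases of Lemma~\ref{lower g}, and the three cases of Proposition~\ref{action on g}---must be verified to align across the regimes $m<p$, $m=p$, $m>p$ so that the cancellation above is valid.  Once this alignment is checked the argument is entirely formal.
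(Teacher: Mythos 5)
Your proposal is correct and follows essentially the same route as the paper: apply $\Lambda(A)$ to $z_m$, expand both sides via Propositions~\ref{z in basis elements}, \ref{iota on z}, \ref{action on g} and Lemma~\ref{lower g}, and equate coefficients of $g_{m,m}$ and $g_{m,m-1}$, with the same case split $m<p$, $m=p$, $m>p$ to check the powers of $p$ (the $\beta(m,m-1)$ factors) cancel. Your uniform formula $A_{m-1,m}\mu_{m,m-1}\lambda_{m-1,m-1}=\lambda_{m,m}+(\hat{q}^{m-1}-\hat{q}^{m})\lambda_{m,m-1}$ and the unit argument via $\hat{q}\equiv 1 \pmod p$ match the paper's conclusion.
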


\begin{proof}
By definition of $\Lambda$,
    $$
        (1\wedge\Psi^q)_*(z_{m})=\sum_{n\leqslant m}A_{n,m}(\iota_{m,n})_*(z_{m}).
    $$
Using Propositions~\ref{z in basis elements} and~\ref{iota on z}, this becomes
    \begin{align}\label{equate}
        \sum_{i=0}^mp^{\beta(m,i)}\lambda_{m,i}(1\wedge\Psi^q)_*(g_{m,i})&=A_{m,m}\sum_{i=0}^mp^{\beta(m,i)}\lambda_{m,i}g_{m,i}\nonumber\\
        &+\sum_{n<m}\sum_{i=0}^nA_{n,m}\mu_{m, n}p^{\nu_p(m!)-\nu_p(n!)+\beta(n,i)}\hat{u}^{m-n}\lambda_{n,i}g_{n, i},
    \end{align}
where $\lambda_{m,i}\in\Z_p, \lambda_{m,m}\in\Z_p^\times$ and $\mu_{m,n}\in\Z_p^\times$.

We will determine information about the $A_{n,m}$s by equating coefficients in equation~\eqref{equate} and using
Proposition~\ref{action on g} and Lemma~\ref{lower g}. Firstly if $m=0$, then 
    $$
    \lambda_{0,0}=\lambda_{0,0}(1\wedge\Psi^q)_*(g_{0,0})=A_{0,0}\lambda_{0,0}g_{0,0}=A_{0,0}\lambda_{0,0},
    $$
so $A_{0,0}=1$.

We will now split the rest of the proof into three cases.
\smallskip

Case (i): $1\leqslant m\leqslant p-1$. Equating coefficients of $g_{m,m}$ gives
    $\hat{q}^m\lambda_{m,m}=A_{m,m}\lambda_{m,m}$, 
so $A_{m,m}=\hat{q}^m$.

\noindent Next, equating coefficients of $g_{m,m-1}$, and using
$\hat{u}g_{m-1,m-1}=g_{m,m-1}$ and $\beta(m,m)=\beta(m,m-1)=0$,
 gives
    $$
    \lambda_{m,m}+\lambda_{m,m-1}\hat{q}^{m-1}
        =\hat{q}^{m}\lambda_{m,m-1}+A_{m-1,m}\mu_{m,m-1}\lambda_{m-1,m-1}.
    $$
So
    $$
    A_{m-1,m}=\mu_{m,m-1}^{-1}\lambda_{m-1,m-1}^{-1}((\hat{q}^{m-1}-\hat{q}^{m})
        \lambda_{m,m-1}+\lambda_{m,m})\ \in \Z_p^\times.
    $$
\smallskip

Case (ii): $m=p$. Equating coefficients of $g_{m,m}$ gives
     $\hat{q}^m\lambda_{m,m}=A_{m,m}\lambda_{m,m}$, so we have $A_{m,m}=\hat{q}^m$ as before.
Then, equating coefficients of $g_{m,m-1}$ gives
    $$
    \lambda_{m,m}p+p\lambda_{m,m-1}\hat{q}^{m-1}
    =\hat{q}^{m}p\lambda_{m,m-1}+A_{m-1,m}\mu_{m,m-1}p\lambda_{m-1,m-1},
    $$
where this time we have used $\hat{u}g_{m-1,m-1}=g_{m,m-1}$, 
$\beta(m,m-1)=\nu_p(p!)=1$ and $\beta(m,m)=0$.
So 
    $$
    A_{m-1,m}=\mu_{m,m-1}^{-1}\lambda_{m-1,m-1}^{-1}((\hat{q}^{m-1}-\hat{q}^{m})
    \lambda_{m,m-1}+\lambda_{m,m})\ \in \Z_p^\times.
    $$ 
\smallskip

Case (iii): $m>p$. We find that $A_{m,m}=\hat{q}^m$ as before.
Then, equating coefficients of $g_{m,m-1}$ we find
    \begin{align*}
    \lambda_{m,m}p^{\nu_p(m)+1}+&p^{\nu_p(m)+1}\lambda_{m,m-1}\hat{q}^{m-1}\\
    &=\hat{q}^{m}p^{\nu_p(m)+1}\lambda_{m,m-1}+A_{m-1,m}\mu_{m,m-1}p^{\nu_p(m)+1}\lambda_{m-1,m-1},
    \end{align*}
where we have used $g_{m,m-1}=\frac{\hat{u}}{p}g_{m-1,m-1}$, and  
    $$
    \beta(m,m-1)=\nu_p(m!)+m-\nu_p((m-1)!)-(m-1)=\nu_p(m)+1.
    $$
So 
    $$
    A_{m-1,m}=\mu_{m,m-1}^{-1}\lambda_{m-1,m-1}^{-1}((\hat{q}^{m-1}-\hat{q}^{m})
    \lambda_{m,m-1}-\lambda_{m,m})\ \in \Z_p^\times.\qedhere
    $$ 
\end{proof}

\subsection{Conjugation}

We now complete the proof of the odd primary analogue of~\cite[Theorem $4.2$]{BaSn}, by conjugating to obtain a particularly
nice form for the matrix. The argument we give for this follows an idea suggested by Francis Clarke, see~\cite[Theorem $5.4.3$]{Vic'sBook}.

Firstly, let $E$ be the invertible diagonal matrix with
    $$
    E_{i,j}=\begin{cases}
            1&\text{if $i=j=0$,}\\
            v_0v_1\dots v_{i-1},&\text{if $i=j>0$},\\
            0,&\text{otherwise}. 
    \end{cases}
    $$
 
Then
\begin{displaymath}
EAE^{-1}=C=\left(\begin{array}{cccccc}
1 & 1 & c_{0,2} & c_{0,3} & c_{0,4} & \cdots\\
0 & \hat{q} & 1       & c_{1,3} & c_{1,4} & \cdots\\
0 & 0 & \hat{q}^2     & 1       & c_{2,4} & \cdots\\
0 & 0 & 0       & \hat{q}^3     & 1       & \cdots\\
\vdots & \vdots & \vdots & \vdots & \vdots & \ddots
\end{array}\right).
\end{displaymath}
for some $c_{i,j}\in\Z_p$.

\begin{thm}
There exists an invertible upper triangular matrix $U$ such that $UCU^{-1}=R$, where
    \begin{displaymath}
        R=\left(\begin{array}{cccccc}
            1 & 1 & 0 & 0 & 0 & \cdots\\
            0 & \hat{q} & 1 & 0 & 0 & \cdots\\
            0 & 0 & \hat{q}^2 & 1 & 0 & \cdots\\
            0 & 0 & 0 & \hat{q}^3 & 1 & \cdots\\
            \vdots & \vdots & \vdots & \vdots & \vdots & \ddots
        \end{array}\right).
        \end{displaymath} 
One such is given by $U=(U_{i,j})_{i,j\geqslant0}\in U_\infty\Z_p$ where the first row is chosen to be
    \begin{displaymath}
        U_{0,j}=\left\{\begin{array}{ll}
        1 & \textrm{ if } j=0,\\
        0 & \textrm{ otherwise,}
    \end{array}\right.
\end{displaymath}
and the next row is defined recursively from the previous one by    
    $$
    U_{i+1,j}=\left(\sum_{s=i}^{j-2}U_{i,s}c_{s,j}\right)+U_{i,j-1}+(\hat{q}^j-\hat{q}^i)U_{i,j}.
    $$
\end{thm}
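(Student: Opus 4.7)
The plan is to verify directly that the matrix $U$ defined by the stated recursion satisfies $UC=RU$, which (once $U$ is known to be invertible) is equivalent to the desired $UCU^{-1}=R$. I would organise the argument in three stages: derive the recursion by comparing $UC$ and $RU$ entry by entry; verify that the resulting matrix is upper triangular with diagonal entries in $1+p\Z_p$; and note that integrality is automatic.

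For the first stage I would compute both sides of $UC=RU$ in coordinates. Since $R_{i,k}=\hat{q}^i\delta_{i,k}+\delta_{i+1,k}$, one has $(RU)_{i,j}=\hat{q}^iU_{i,j}+U_{i+1,j}$. Since $C_{k,k}=\hat{q}^k$, $C_{k,k+1}=1$ and $C_{k,j}=c_{k,j}$ for $j>k+1$, and both $U$ and $C$ are upper triangular, for $j\geqslant i+1$ one computes
\[
(UC)_{i,j}=\sum_{k=i}^{j-2}U_{i,k}c_{k,j}+U_{i,j-1}+\hat{q}^jU_{i,j}.
\]
Equating these two expressions and solving for $U_{i+1,j}$ gives exactly the stated recursion. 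Thus, by construction, $UC=RU$ holds in every entry with $j>i$, and on the diagonal both sides equal $\hat{q}^iU_{i,i}$.

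For the second stage I would perform an induction on the row index $i$, maintaining as hypotheses that $U_{i,j}\in\Z_p$ for all $j$, that $U_{i,j}=0$ for $j<i$, and that $U_{i,i}\in 1+p\Z_p$. Upper-triangularity of row $i+1$ follows from the recursion with $j\leqslant i$: when $j=i$ the empty $c$-sum and the factor $\hat{q}^i-\hat{q}^i$ both vanish, leaving $U_{i+1,i}=U_{i,i-1}=0$ by the hypothesis, and the cases $j<i$ are similar. For the diagonal, the crucial input is that $\hat{q}=q^{p-1}\equiv1\pmod{p}$ by Fermat's little theorem, so $\hat{q}^j-\hat{q}^i=\hat{q}^i(\hat{q}^{j-i}-1)\in p\Z_p$ whenever $j\neq i$. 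Taking $j=i+1$ in the recursion yields $U_{i+1,i+1}=U_{i,i}+(\hat{q}^{i+1}-\hat{q}^i)U_{i,i+1}\equiv U_{i,i}\equiv 1\pmod{p}$, closing the induction. Integrality of the off-diagonal entries of row $i+1$ is immediate since every $c_{s,j}$ and every $\hat{q}^j-\hat{q}^i$ lies in $\Z_p$.

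I do not anticipate a serious obstacle; the only care needed is the simultaneous row induction that tracks upper-triangularity, integrality, and the $1+p\Z_p$ condition on the diagonal in lockstep. Once these are in place, $U\in U_\infty\Z_p$ and is therefore invertible, the identity $UC=RU$ holds by construction, and rearranging gives $UCU^{-1}=R$ as required.
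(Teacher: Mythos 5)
Your proposal is correct and follows essentially the same route as the paper: verify $UC=RU$ entrywise (the recursion being exactly this identity solved for $U_{i+1,j}$), and prove upper-triangularity and invertibility of the diagonal by row induction using $\hat{q}\equiv 1\pmod p$. Your extra observation that the diagonal entries lie in $1+p\Z_p$ (not merely $\Z_p^\times$) is a small welcome refinement, since the statement asserts $U\in U_\infty\Z_p$.
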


\begin{proof}
Let $U$ be the matrix defined recursively above. First we check that $U$ is upper triangular and invertible. It is clear that $U_{i,j}\in\Z_p$ for $i,j\geqslant0$. It can be shown that $U_{i,j}=0$ if $i>j$ by induction on $i$. It is true from the formula that $U_{1,0}=0$. Now assume that $U_{i-1,j}=0$ for all $j<i-1$. Then, for $i>j$, 
    $$
    U_{i,j}=U_{i-1,j-1}+(\hat{q}^j-\hat{q}^{i-1})U_{i-1,j}.
    $$ 
If $j<i-1$, then both $U_{i-1,j-1}$ and $U_{i-1,j}$ are zero by assumption. And if $j=i-1$, then $U_{i-1,j-1}$ is zero and $\hat{q}^{i-1}-\hat{q}^{i-1}=0$, so the induction is complete.

Now we show that $U_{i,i}\in\Z_p^\times$ for all $i\geqslant0$, so that $U$ is invertible.
Again we proceed by induction. Clearly $U_{0,0}=1\in\Z_p^\times$. Now assume that $U_{i,i}\in\Z_p^\times$. 
We have 
    $$
    U_{i+1,i+1}=U_{i,i}+(\hat{q}^{i+1}-\hat{q}^i)U_{i,i+1}
    $$ 
and since $\hat{q}^{i+1}-\hat{q}^i=\hat{q}^i(\hat{q}-1)\equiv0\mod p$, it follows that $U_{i+1,i+1}\in \Z_p^\times$. 

To show that $UCU^{-1}=R$, we compare entries $(UC)_{i,j}$ and $(RU)_{i,j}$. Diagonally 
$(UC)_{i,i}=\hat{q}^iU_{i,i}=(RU)_{i,i}$. Now let $j>i$; the entries of $UC$ and $RU$ are given by
    \begin{align*}
        (UC)_{i,j}&=\left(\sum_{s=i}^{j-2}U_{i,s}c_{s,j}\right)+U_{i,j-1}+\hat{q}^jU_{i,j},\\
        (RU)_{i,j}&=\hat{q}^iU_{i,j}+U_{i+1,j}.
    \end{align*}
Then the recurrence relation for the entries $U_{i,j}$ tells us that   
    $(UC)_{i,j}=(RU)_{i,j}$.
    
Hence $(UC)_{i,j}=(RU)_{i,j}$ for all $i,j\geqslant0$ and $j\geqslant i$.
\end{proof}

So we now have the following result.

\begin{thm}\label{isomatrix}
There is an isomorphism of groups 
    $$
    \Lambda': U_\infty\Z_p\rightarrow\Aut^0_{\text{left-}\ell\text{-mod}}(\ell\wedge \ell),
    $$
under which
the automorphism $1\wedge\Psi^q$ corresponds to the matrix
\begin{displaymath}
R=\left(\begin{array}{cccccc}
1 & 1 & 0 & 0 & 0 & \cdots\\
0 & \hat{q} & 1 & 0 & 0 & \cdots\\
0 & 0 & \hat{q}^2 & 1 & 0 & \cdots\\
0 & 0 & 0 & \hat{q}^3 & 1 & \cdots\\
\vdots & \vdots & \vdots & \vdots & \vdots & \ddots
\end{array}\right).
\end{displaymath}
The isomorphism is given by $\Lambda'(X)= \Lambda(B^{-1}XB)$, where
 $B=UE$ and $E$ and $U$ are the matrices above.
\end{thm}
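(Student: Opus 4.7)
The plan is to deduce this theorem directly from Theorem~\ref{iso}, Proposition~\ref{Aform}, and the preceding conjugation theorem, by packaging the identity $BAB^{-1}=R$ into a redefined isomorphism. The key observation is simply that conjugation by $B=UE$ converts the matrix $A$ representing $1\wedge\Psi^q$ under $\Lambda$ into the canonical form $R$, so replacing $\Lambda$ by $X\mapsto\Lambda(B^{-1}XB)$ yields the desired isomorphism.

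First I would verify that conjugation by $B$ restricts to a group automorphism of $U_\infty\Z_p$. The diagonal matrix $E$ has entries in $\Z_p^\times$ (products of the units $\upsilon_i$ from Proposition~\ref{Aform}), and $U$ itself lies in $U_\infty\Z_p$: an easy induction using the recursion $U_{i+1,i+1}=U_{i,i}+(\hat{q}^{i+1}-\hat{q}^i)U_{i,i+1}$ together with $\hat{q}\equiv 1\pmod p$ shows that the diagonal entries of $U$ all lie in $1+p\Z_p$. Hence $B=UE$ is an invertible upper triangular matrix over $\Z_p$ with diagonal entries in $\Z_p^\times$. Since $\Z_p$ is commutative, the diagonal of $B^{-1}XB$ agrees with that of $X$ for any upper triangular $X$, so conjugation by $B$ sends $U_\infty\Z_p$ bijectively to itself and is a group automorphism.

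Next I would set $\Lambda'(X):=\Lambda(B^{-1}XB)$. As the composite of this conjugation automorphism with the isomorphism $\Lambda$ of Theorem~\ref{iso}, $\Lambda'$ is a group isomorphism from $U_\infty\Z_p$ onto $\Aut^0_{\text{left-}\ell\text{-mod}}(\ell\wedge\ell)$. To finish I would check that $\Lambda'(R)=1\wedge\Psi^q$. By the definition of $E$ we have $EAE^{-1}=C$, and the preceding theorem gives $UCU^{-1}=R$; composing these two identities yields $BAB^{-1}=R$, equivalently $A=B^{-1}RB$. Applying $\Lambda$ and invoking Proposition~\ref{Aform} then gives $\Lambda'(R)=\Lambda(B^{-1}RB)=\Lambda(A)=1\wedge\Psi^q$, as required.

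The main obstacle has already been confronted in the preceding theorem, which exhibits the explicit recursion for $U$ and verifies that it conjugates $C$ to $R$. The present statement is essentially a reformulation of that construction as the existence of a modified isomorphism, so beyond the elementary verifications above no further substantive work is required.
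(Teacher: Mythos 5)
Your proposal is correct and follows essentially the same route as the paper: the paper's proof simply notes that since $B=UE$ is invertible and upper triangular, $X\mapsto B^{-1}XB$ is a group automorphism of $U_\infty\Z_p$, and then composes with $\Lambda$, exactly as you do. Your additional checks (diagonal entries preserved under conjugation, $BAB^{-1}=R$ from $EAE^{-1}=C$ and $UCU^{-1}=R$) are just the details the paper leaves implicit.
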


\begin{proof}
Since $B$ is an invertible upper triangular matrix, $X\mapsto B^{-1}XB$ is a group isomorphism
$U_\infty\Z_p\rightarrow U_\infty\Z_p$ and the result follows.
\end{proof}

\begin{remark}
It would be interesting to find an explicit basis for which the matrix of $1\wedge\Psi^q$ is precisely
$R$. We hope to return to this in future work.
\end{remark}

\section{Applications}
\label{SecApplications}

In this section we present two applications. Firstly, we obtain precise information about the important map                           
    $$
    1\wedge\phi_n=1\wedge(\Psi^q-1)(\Psi^q-\hat{q})\ldots(\Psi^q-\hat{q}^{n-1}):\ell\wedge\ell\rightarrow\ell\wedge\ell.
    $$
We give closed formulas involving $q$-binomial coefficients for all the entries in the corresponding matrix.
Secondly, we give a new description of the ring $l^0(l)$ of degree zero stable operations for the ($p$-local) 
Adams summand in terms of matrices.

\subsection{The Map $1\wedge\phi_n$ and the Matrix $X_n$}

We apply the preceding result to study of the map                           
    $$
    1\wedge\phi_n=1\wedge(\Psi^q-1)(\Psi^q-\hat{q})\ldots(\Psi^q-\hat{q}^{n-1}):\ell\wedge\ell\rightarrow\ell\wedge\ell.
    $$
The analogous map was first studied by Milgram in~\cite{Milgram} in relation to real 
connective $K$-theory $ko$ localised at the prime $2$. We follow the method used in~\cite[Theorem $5.4$]{BaSn}, but we are
able to produce new closed formulas for every entry in the matrix corresponding to the above map, in terms of
$q$-binomial coefficients (also known as Gaussian polynomials). A short discussion of the relevant information about these can be found in the
appendix.

We will write $\tilde{U}_\infty\Z_p$ for the ring of upper triangular matrices with entries in the $p$-adic integers.
The group $U_\infty\Z_p$ is a subgroup of the multiplicative group of units in this ring. Generalising the group isomorphism $\Lambda'$ of 
Theorem~\ref{isomatrix} we can construct the following diagram
    \begin{displaymath}
        \xymatrix{
            U_\infty\Z_p \ar[rrr]^{\Lambda'}_{\cong} \ar[d]_\cap  &&& \Aut^0_{\text{left-}\ell\text{-mod}}(\ell\wedge \ell) \ar[d]_\cap \\
            \tilde{U}_\infty\Z_p \ar[rrr]_{\lambda'} &&& \End_{\text{left-}\ell\text{-mod}}(\ell\wedge \ell)}
    \end{displaymath}
where $\lambda'_{|U_\infty\Z_p}=\Lambda'$. Recall that the map $\Lambda'$ sends a matrix 
$A\in U_\infty\Z_p$ to $\Lambda'(A)=\Lambda(B^{-1}AB)=\sum_{m\geqslant n}(B^{-1}AB)_{n,m}\iota_{m,n}$. We extend this by letting 
    $$
    \lambda(A')=\sum_{m\geqslant n}(B^{-1}A'B)_{n,m}\iota_{m,n}
    $$ 
to obtain a left-$\ell$-module endomorphism of $\ell\wedge\ell$. This is a multiplicative map by the same argument given for $\Lambda$ in the proof of Proposition~\ref{extprod}.

By moving from $U_\infty\Z_p$ to $\tilde{U}_\infty\Z_p$ it is now possible to use the additive structure given by matrix addition
and it is easy to check that $\lambda'$ respects addition.

\begin{dfn}
Let 
    $$
    \phi_n=(\Psi^q-1)(\Psi^q-\hat{q})\cdots(\Psi^q-\hat{q}^{n-1})
    $$ and let $R_n=R-\hat{q}^{n-1}I\in\tilde{U}_\infty\Z_p$ and $X_n=R_1R_2\cdots R_n\in\tilde{U}_\infty\Z_p$.
Here $I$ denotes the infinite identity matrix.
\end{dfn}

By Theorem~\ref{isomatrix}, the map $1\wedge\Psi^q$ corresponds to the matrix $R$.
It follows that $1\wedge\phi_n$ corresponds to the matrix $X_n$.

A basic tool we will use is splitting up the matrix $R$ 
in order to easily calculate its powers.

\begin{dfn}\label{SD}
Define matrices $D$ and $S$ in $\tilde{U}_\infty\Z_p$ by
    \begin{displaymath}
    D=\left(\begin{array}{cccccc}
    1 & 0 & 0 & 0 & 0 & \cdots\\
    0 & \hat{q} & 0 & 0 & 0 & \cdots\\
    0 & 0 & \hat{q}^2 & 0 & 0 & \cdots\\
    0 & 0 & 0 & \hat{q}^3 & 0 & \cdots\\
    \vdots & \vdots & \vdots & \vdots & \vdots & \ddots
    \end{array}\right),\qquad
    S=\left(\begin{array}{cccccc}
    0 & 1 & 0 & 0 & 0 & \cdots\\
    0 & 0 & 1 & 0 & 0 & \cdots\\
    0 & 0 & 0 & 1 & 0 & \cdots\\
    0 & 0 & 0 & 0 & 1 & \cdots\\
    \vdots & \vdots & \vdots & \vdots & \vdots & \ddots
    \end{array}\right).
\end{displaymath}
\end{dfn}

Then $R=D+S$. Since powers of $D$ and $S$ are easy to calculate,
and we have $SD=\hat{q}DS$,  we are in a situation where we can apply the $q$-binomial theorem
to calculate $R^n=(D+S)^n$. See the appendix for a short discussion of the
$q$-binomial coefficients ${n\brack m}_{q}$.

\begin{lem}\label{Rpower}
    $$
    (R^n)_{s,s+c}={n\brack n-c}_{\hat{q}}\hat{q}^{(s-1)(n-c)}.
    $$
\end{lem}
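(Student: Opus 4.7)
The strategy is to exploit the decomposition $R = D + S$ from Definition~\ref{SD} together with the $\hat{q}$-commutation relation already noted in the text, and then to apply the $q$-binomial theorem from the appendix.

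First, I would confirm the commutation relation $SD = \hat{q}DS$ by a direct entrywise computation. Both products vanish off the first superdiagonal, and there one finds $(SD)_{s,s+1} = \hat{q}^{s}$ while $(DS)_{s,s+1} = \hat{q}^{s-1}$. Thus, with $x = D$ and $y = S$, the pair satisfies $yx = \hat{q}\,xy$, which is precisely the hypothesis of the $q$-binomial theorem with $q = \hat{q}$.

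The $q$-binomial theorem then yields
$$
R^n \;=\; (D + S)^n \;=\; \sum_{k = 0}^{n} {n \brack k}_{\hat{q}}\, D^{\,n-k}\, S^{\,k}.
$$
Next I would compute entries term by term. Since $D^{n-k}$ is diagonal with $(D^{n-k})_{s,s} = \hat{q}^{(n-k)(s-1)}$, and $S^{k}$ satisfies $(S^{k})_{s,j} = \delta_{j,\,s+k}$, multiplication gives
$$
(D^{\,n-k}\,S^{\,k})_{s,\,s+c} \;=\; \hat{q}^{(n-k)(s-1)}\,\delta_{k,\,c},
$$
so only the summand with $k = c$ survives. Applying the symmetry ${n \brack c}_{\hat{q}} = {n \brack n-c}_{\hat{q}}$ then produces the claimed formula
$$
(R^n)_{s,\,s+c} \;=\; {n \brack n-c}_{\hat{q}}\,\hat{q}^{(s-1)(n-c)}.
$$

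There is no real obstacle once the ingredients are assembled: the proof is essentially a single invocation of the $q$-binomial theorem combined with a trivial entrywise calculation. The only point that requires care is the indexing convention — the exponent $(s-1)(n-c)$ rather than $s(n-c)$ reflects the labelling of $R$ beginning at $s = 1$, so that $D_{s,s} = \hat{q}^{s-1}$ on the diagonal.
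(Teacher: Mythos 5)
Your proof is correct and follows essentially the same route as the paper: decompose $R=D+S$, use $SD=\hat{q}DS$ with the $q$-binomial theorem, and observe that only one term of the expansion contributes to the $(s,s+c)$ entry. The only cosmetic difference is that you expand as $\sum_k{n\brack k}_{\hat{q}}D^{n-k}S^k$ and invoke the symmetry ${n\brack c}_{\hat{q}}={n\brack n-c}_{\hat{q}}$ at the end, whereas the paper expands as $\sum_i{n\brack i}_{\hat{q}}D^iS^{n-i}$ and picks out the $i=n-c$ term directly, needing no symmetry.
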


\begin{proof}
We note that $D^iS^j$ is given by
    \begin{displaymath}
    (D^iS^j)_{s,t}=\begin{cases}
    \hat{q}^{(s-1)i}&\textrm{ if }t=s+j,\\
    0&\textrm{ otherwise.}
    \end{cases}
    \end{displaymath}
Applying the $q$-binomial theorem~(\ref{expand}), we have
    $$
    (R^n)_{s,s+c}=((D+S)^n)_{s,s+c}=\sum_{i=0}^n{n\brack i}_{\hat{q}}(D^iS^{n-i})_{s,s+c}.
    $$
\noindent For any particular value of $c$ at most one term in this sum is non-zero, namely the $i=n-c$ term 
if $0\leqslant c\leqslant n$. The result follows.
\end{proof}

Let $\Omega$ denote the homotopy equivalence giving Kane's splitting,    
    $$
    \Omega:\bigvee_{n\geq0}\ell\wedge\K(n)\rightarrow\ell\wedge\ell.
    $$

\begin{thm}\label{app}
\begin{list}
{(\arabic{itemcounter})}
{\usecounter{itemcounter}\leftmargin=0.5em}
\item The first $n$ columns of the matrix $X_n$ are trivial.
\item Let $C_n$ be the mapping cone of the restriction of $\Omega$ to the first $n$ pieces of the splitting of $\ell\wedge\ell$, i.e.     
    $$
    C_n=\text{Cone}\left(\Omega_|:\bigvee_{0\leqslant m\leqslant n-1}\ell\wedge\K(m)\rightarrow\ell\wedge\ell\right).
    $$ 
Then in the $p$-complete stable homotopy category there exists a commutative diagram of left $\ell$-module spectra of the form
    \begin{displaymath}
    \xymatrix{\ell\wedge\ell \ar[rr]^{1\wedge\phi_n} \ar[dr]_{\pi_n} && \ell\wedge\ell\\
    & C_n \ar[ur]_{\hat{\phi}_n}&    }
    \end{displaymath}
    where $\pi_n$ is the cofibre of $\Omega_|$ and $\hat{\phi}_n$ is determined up to homotopy by the diagram.
\item For $n\geqslant1$, we have $(X_n)_{s,s+c}=0$ if $c<0$ or $c>n$ and for $0\leqslant c\leqslant n$,
    $$
    (X_n)_{s,s+c}=
                \sum_{i=c}^n(-1)^{n-i}\hat{q}^{{n-i\choose 2}+(s-1)(i-c)}{n\brack i}_{\hat{q}}{i\brack i-c}_{\hat{q}}.
    $$
\end{list}
\end{thm}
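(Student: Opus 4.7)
The proof proceeds in three parts: I would first establish part (1) by an algebraic identity for upper triangular matrices, then deduce part (2) from a standard cofiber-sequence argument, and finally obtain part (3) by applying the $q$-binomial theorem to expand the product.

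For part (1), the key observation is that the matrices $R_i = R - \hat{q}^{i-1}I$ all commute, since each is a polynomial in the single matrix $R$. Let $e_0, e_1, e_2, \dots$ denote the standard column vectors, so that $R e_j = \hat{q}^j e_j + e_{j-1}$ for $j \geqslant 1$ and $R e_0 = e_0$. Then $R_{j+1} e_j = e_{j-1}$ and $R_1 e_0 = 0$, so these identities telescope to give $R_1 R_2 \cdots R_{j+1} e_j = 0$ for every $j \geqslant 0$. When $j < n$, commutativity of the $R_i$ lets me reorder the product as $X_n = (R_{j+2} \cdots R_n)(R_1 \cdots R_{j+1})$, and applying this to $e_j$ yields $X_n e_j = 0$. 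Equivalently, this is Cayley--Hamilton applied to the restriction of $R$ to the invariant subspace spanned by $e_0, \dots, e_{n-1}$, on which $R$ has eigenvalues $1, \hat{q}, \dots, \hat{q}^{n-1}$.

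For part (2), by the correspondence $\Lambda'$ of Theorem~\ref{isomatrix}, the vanishing in part (1) means that the composite $(1 \wedge \phi_n) \circ \Omega_|$ is null-homotopic. The cofibration sequence
\[
\bigvee_{m<n} \ell \wedge \K(m) \xrightarrow{\Omega_|} \ell \wedge \ell \xrightarrow{\pi_n} C_n
\]
then yields a factorization $1 \wedge \phi_n = \hat{\phi}_n \circ \pi_n$ in the $p$-complete stable homotopy category, determined up to homotopy by the diagram. Since $\Omega_|$, $\pi_n$, and $1 \wedge \phi_n$ are all left $\ell$-module maps, $\hat{\phi}_n$ can be chosen as one as well.

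For part (3), I use the commutativity of the $R_i$ to write $X_n = \prod_{i=0}^{n-1}(R - \hat{q}^i I)$ as a polynomial in $R$. The classical identity $e_k(1, q, \dots, q^{n-1}) = q^{\binom{k}{2}} {n \brack k}_q$ for elementary symmetric polynomials---an easy consequence of the $q$-binomial theorem in the appendix---gives
\[
X_n = \sum_{k=0}^n (-1)^{n-k} \hat{q}^{\binom{n-k}{2}} {n \brack k}_{\hat{q}} R^k.
\]
Substituting Lemma~\ref{Rpower} and noting that ${k \brack k-c}_{\hat{q}} = 0$ for $k < c$, the sum effectively begins at $k = c$; relabeling $k$ as $i$ produces the stated closed formula. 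The vanishing of $(X_n)_{s,s+c}$ for $c < 0$ follows from upper-triangularity, and for $c > n$ from the fact that $X_n$ is a polynomial of degree $n$ in $R$ combined with $(R^k)_{s, s+c} = 0$ whenever $c > k$. The main delicate point is the bookkeeping of $\hat{q}$-exponents when substituting; beyond this, the proof is almost entirely structural.
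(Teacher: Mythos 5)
Your proposal is correct, and its skeleton matches the paper's: expand $X_n$ as a polynomial in $R$ with $\hat{q}$-binomial coefficients, feed in Lemma~\ref{Rpower}, and deduce part (2) from part (1) via the cofibre sequence. The differences are in the elementary steps. For part (1) the paper argues by induction on $n$, computing entries of $X_{n+1}=X_nR_{n+1}$ directly, whereas you use that the factors $R_i$ commute (being polynomials in $R$) and that $R_1\cdots R_{j+1}$ annihilates the $j$-th standard column vector, a clean Cayley--Hamilton-style argument of the same strength. For part (3) the paper quotes the identity $X_n=\sum_{i=0}^n(-1)^{n-i}\hat{q}^{\binom{n-i}{2}}{n\brack i}_{\hat{q}}R^i$ from \cite[Proposition 8]{ccw} and proves the vanishing for $c>n$ by a second induction, while you rederive the identity from the specialization $e_k(1,\hat{q},\dots,\hat{q}^{n-1})=\hat{q}^{\binom{k}{2}}{n\brack k}_{\hat{q}}$ and read off the $c>n$ vanishing from the degree of the polynomial together with $(R^k)_{s,s+c}=0$ for $c>k$; both routes are valid, yours being marginally more self-contained. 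One small point that you and the paper both gloss over: the coefficient of $\iota_{m,k}$ in $\lambda'(X_n)$ is $(B^{-1}X_nB)_{k,m}$ rather than $(X_n)_{k,m}$, so part (2) tacitly needs the (easy) observation that vanishing of the first $n$ columns is preserved under conjugation by the upper triangular matrix $B$ --- the ideal property of the column filtration that appears explicitly later in Section 5.
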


\begin{proof}
\quad (1)\ \ The result is certainly true of $X_1=R_1$. We prove the result for all $n\geqslant1$ by induction. 
Assume that the first $n$ columns of $X_n$ are trivial, i.e. $(X_n)_{i,j}=0$ if $j\leqslant n$. By definition $X_{n+1}=X_nR_{n+1}$. 
We also have $(R_{n+1})_{i,j}=0$ unless $(i,j)=(s,s)$ or $(s,s+1)$ and $(R_{n+1})_{n+1,n+1}=0$. Now 
    $$
    (X_{n+1})_{i,j}=(X_n)_{i,j-1}(R_{n+1})_{j-1,j}+(X_n)_{i,j}(R_{n+1})_{j,j}.
    $$ 
This is zero if $j\leqslant n$ because $(X_n)_{i,j-1},(X_n)_{i,j}=0$ and it is zero if $j=n+1$ 
because $(X_n)_{i,n},(R_{n+1})_{n+1,n+1}=0$.
\smallskip

\noindent (2)\ \  We know that $1\wedge\phi_n=\lambda'(X_n)$. In order for $1\wedge\phi_n$ to factor via $C_n$ (and for the diagram to commute) we need to show that $X_n$ corresponds under $\lambda'$ to a left $\ell$-module endomorphism of $\vee_{m\geqslant0}\ell\wedge\K(m)$ which is trivial on each piece $\ell\wedge\K(m)$ where $m\leqslant n-1$. The map $\lambda'(X_n)$ acts trivially on pieces $\ell\wedge\K(m)$ where $m\leqslant n-1$ if each map $\iota_{m,k}:\ell\wedge\K(m)\rightarrow\ell\wedge\K(k)$ has coefficient zero when $m\leqslant n-1$ in the explicit description of $\lambda'(X_n)$. This corresponds to the condition $(X_n)_{k,m}=0$ when $m\leqslant n-1$, which is true by part (1).
\smallskip 

\noindent (3)\ \ Certainly $X_n$ is upper triangular, so $(X_n)_{s,s+c}=0$ if $c<0$. We show that $(X_n)_{s,s+c}=0$ if $c>n$ by induction on $n$. The initial case for the induction is $X_1$ where this clearly holds. Assume that $(X_{n-1})_{s,s+c}=0$ if $c>n-1$. As in part (1), we have $X_n=X_{n-1}R_n$ and  $(X_n)_{i,j}=(X_{n-1})_{i,j-1}(R_n)_{j-1,j}+(X_{n-1})_{i,j}(R_n)_{j,j}$. Now let $j>n$, then     
    $$
    (X_n)_{s,s+j}=(X_{n-1})_{s,s+j-1}(R_n)_{s+j-1,s+j}+(X_{n-1})_{s,s+j}(R_n)_{s+j,s+j}
    $$ 
and this is zero because both $(X_{n-1})_{s,s+j-1}$ and $(X_{n-1})_{s,s+j}$ are zero by the inductive hypothesis.

For the second part, by~\cite[Proposition 8]{ccw},
    $$
    X_n=\sum_{i=0}^n(-1)^{n-i}\hat{q}^{n-i\choose 2}{n\brack i}_{\hat{q}}R^i.
    $$
Hence, using Lemma~\ref{Rpower},
    \begin{align*}
    (X_n)_{s,s+c}&=\sum_{i=0}^n(-1)^{n-i}\hat{q}^{n-i\choose 2}{n\brack i}_{\hat{q}}(R^i)_{s,s+c}\\
    &=\sum_{i=c}^n(-1)^{n-i}\hat{q}^{n-i\choose 2}{n\brack i}_{\hat{q}}{i\brack i-c}_{\hat{q}}\hat{q}^{(s-1)(i-c)}.
    \end{align*}
    The range of the final sum can be restricted to starting from $c$ rather than $0$ as the second $q$-binomial coefficient is zero
    for $i\leqslant c$.
\end{proof}

\subsection{$K$-Theory Operations}

The matrix approach provides another way of viewing the ring of stable
degree zero operations on the $p$-local Adams summand. We will work in this final section in the $p$-local stable homotopy category. In a slight abuse of notation let $\ell$ now denote the Adams summand of $p$-local complex connective $K$-theory (rather than the $p$-complete version). 
Let $\tilde{U}_\infty\Z_{(p)}$ be the ring of upper triangular matrices with entries in the $p$-local integers.

\begin{dfn}
We define a filtration on $\tilde{U}_\infty\Z_{(p)}$ by, for $n\in\N$, 
    $$
    U_n=\{X\in\tilde{U}_\infty\Z_{(p)}:x_{i,j}=0\text{ if }j\leqslant n\}.
    $$ 
This gives a decreasing filtration 
    $$
    \tilde{U}_\infty\Z_{(p)}=U_0\supset U_1\supset U_2\supset\cdots
    $$ 
where each $U_n$ is a two-sided ideal of $\tilde{U}_\infty\Z_{(p)}$.
\end{dfn}

This column filtration gives
a filtration by two-sided ideals because the matrices are upper triangular
(and this would not be the case if we filtered by rows). This can be regarded as the natural filtration 
on $\tilde{U}_\infty\Z_{(p)}$ and $\tilde{U}_\infty\Z_{(p)}$ is complete with respect to this topology.

\begin{thm}\label{topringapp}
The ring of degree zero stable operations of the Adams summand, $\ell^0(\ell)$, is isomorphic as a topological ring 
to the completion of the subring of $\tilde{U}_\infty\Z_{(p)}$ generated by the matrix $R$.
\end{thm}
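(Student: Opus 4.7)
The plan is to construct a continuous injective ring homomorphism
    $$
    \theta\colon\ell^0(\ell)\too\tilde{U}_\infty\Z_{(p)}
    $$
whose image is exactly the completion of $\Z_{(p)}[R]$. First I would establish the evident $p$-local analogues of Theorem~\ref{iso} and the extension $\lambda'$ introduced earlier in Section~\ref{SecApplications}, identifying $\End_{\textup{left-}\ell\textup{-mod}}(\ell\wedge\ell)$ with $\tilde{U}_\infty\Z_{(p)}$ as topological rings; the arguments of Sections~\ref{SecUTT}--\ref{Secmatrix} go through after replacing $\Z_p$ by $\Z_{(p)}$ throughout and working in the $p$-local stable homotopy category. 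Composing with the smash-product map $\phi\mapsto 1\wedge\phi$ then yields the required $\theta$, which is a unital ring homomorphism because $1\wedge(\phi\circ\psi)=(1\wedge\phi)\circ(1\wedge\psi)$. The $p$-local version of Theorem~\ref{isomatrix} gives $\theta(\Psi^q)=R$, so $\theta$ sends $\Z_{(p)}[\Psi^q]$ onto $\Z_{(p)}[R]$, and in particular $\theta(\phi_n)=X_n$ for every $n\geq 0$.

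Injectivity of $\theta$ is a formal unitality argument: if $1\wedge\phi=0$, then precomposing with the left unit $\eta\wedge 1\colon\ell\to\ell\wedge\ell$ and postcomposing with the multiplication $\mu\colon\ell\wedge\ell\to\ell$ recovers $\phi$ itself, forcing $\phi=0$. For continuity, Theorem~\ref{app}(1) says the first $n$ columns of $X_n$ vanish, so $\theta(\phi_n)\in U_{n-1}$; thus $\theta$ carries the natural $\phi_n$-adic filtration on $\ell^0(\ell)$ into the column filtration on $\tilde{U}_\infty\Z_{(p)}$, and hence into the closure of $\Z_{(p)}[R]$.

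To identify the image with the completion of $\Z_{(p)}[R]$, I would invoke the classical structure theorem for $\ell^0(\ell)$ (essentially due to Adams, with variants by Clarke and others): every degree-zero stable operation admits a unique expansion $\sum_{n\geq 0}a_n\phi_n$ with $a_n\in\Z_{(p)}$, and this realises $\ell^0(\ell)$ as a complete Hausdorff topological ring in the $\phi_n$-adic topology. Under $\theta$ these map to $\sum_{n\geq 0}a_nX_n$, which converges in $\tilde{U}_\infty\Z_{(p)}$ because $X_n\in U_{n-1}$. Conversely, since $X_n=(R-1)(R-\hat{q})\cdots(R-\hat{q}^{n-1})$ is a monic polynomial in $R$ of degree $n$, the family $\{X_n\}_{n\geq 0}$ forms a $\Z_{(p)}$-basis of $\Z_{(p)}[R]$, and a standard argument shows that every Cauchy sequence in $\Z_{(p)}[R]$ with respect to the column topology has a unique limit of the form $\sum a_n X_n$. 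Matching coefficients establishes a bijection between $\ell^0(\ell)$ and the completion of $\Z_{(p)}[R]$, and combined with injectivity and continuity of $\theta$ gives the required topological ring isomorphism. The main obstacle is having the Adams-style structure theorem for $\ell^0(\ell)$ rigorously in hand; once it is invoked, everything else is either formal or an immediate consequence of the matrix calculations of the preceding sections.
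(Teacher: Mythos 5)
There is a genuine gap at the very first step of your construction. You propose to obtain $\theta$ by identifying $\End_{\text{left-}\ell\text{-mod}}(\ell\wedge\ell)$ with $\tilde{U}_\infty\Z_{(p)}$ via ``$p$-local analogues'' of Theorem~\ref{iso} and $\lambda'$, asserting that the arguments of Sections~\ref{SecUTT}--\ref{Secmatrix} go through with $\Z_p$ replaced by $\Z_{(p)}$. They do not: the splitting $\ell\wedge\ell\simeq\ell\wedge\bigvee_{n}\K(n)$ on which everything rests is only available in the $p$-complete setting (Section~\ref{SecUTT} explicitly recalls that Kane's $p$-local claim had a gap, repaired by Cohen--Davis--Goerss--Mahowald only $p$-completely), and the Adams spectral sequence~(\ref{ASS}) converges to $p$-completed homotopy, so the groups of maps it computes are $\Z_p$-modules; there is no mechanism in those sections for producing matrices with entries in $\Z_{(p)}$. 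Moreover, even $p$-completely the paper only establishes the isomorphism $U_\infty\Z_p\cong\Aut^0_{\text{left-}\ell\text{-mod}}(\ell\wedge\ell)$ and a one-way multiplicative map $\lambda'\colon\tilde{U}_\infty\Z_p\to\End_{\text{left-}\ell\text{-mod}}(\ell\wedge\ell)$; an identification of the full endomorphism ring with upper triangular matrices (which you need in order to read off ``the matrix of $1\wedge\phi$'' for non-invertible $\phi$ such as $\phi_n$) is nowhere proved, and is problematic because endomorphisms whose components are detected on the $s=0$ line are not accounted for. So the map $\theta$ as you define it is not available.

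The paper avoids all of this by never realizing the map topologically: it takes the description $\ell^0(\ell)=\{\sum_{n\geq0}a_n\phi_n\}$ from~\cite[Theorem 4.4]{CCW2} (the same expansion theorem you invoke at the end), observes that this ring is complete in the $\phi_n$-filtration, and defines $\alpha\colon\ell^0(\ell)\to\tilde{U}_\infty\Z_{(p)}$ purely algebraically as the continuous ring map determined by $\Psi^q\mapsto R$; the matrix identity $\alpha(\phi_n)=X_n$ together with Theorem~\ref{app}(1) ($X_n\in U_n$, a purely matrix-theoretic fact) gives continuity and well-definedness of $\sum a_nX_n$, injectivity follows from uniqueness of the coefficients $a_n$, and the image is identified with the completion of $\Z_{(p)}[R]$ by continuity and completeness of $\tilde{U}_\infty\Z_{(p)}$ --- essentially the convergence and completion arguments you give in your last paragraph. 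Your unit-and-multiplication injectivity argument and your identification of the image would be fine in themselves, but they sit on top of a construction of $\theta$ that is not justified; if you want Theorem~\ref{topringapp} for the $p$-local summand, the map has to be built algebraically from the $\phi_n$-expansion as the paper does, not through a $p$-local version of the upper triangular machinery.
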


\begin{proof}
We have the following description of $\ell^0(\ell)$ from~\cite[Theorem 4.4]{CCW2} 
    $$
    \ell^0(\ell)=\left\{\sum_{n=0}^\infty a_n\phi_n:a_n\in\Z_{(p)}\right\}.
    $$ 
This is complete in the filtration topology when filtered by the
ideals 
    $$
    \left\{\sum_{n=m}^\infty a_n\phi_n:a_n\in\Z_{(p)}\right\}.
    $$
Define a map $\alpha:\ell^0(\ell)\rightarrow\tilde{U}_\infty\Z_{(p)}$ as
 the continuous ring homomorphism determined by $\alpha(\Psi^q)=R$. 
We have 
    $$
    \alpha(\phi_n)=\alpha(\prod_{i=0}^{n-1}(\Psi^q-\hat{q}^i))=\prod_{i=0}^{n-1}(R-\hat{q}^i)=X_n.
    $$
By Theorem~\ref{app} (1), the first $n$ columns of $X_n$ are trivial, so $\alpha(\phi_n)\in U_n$. Thus $\alpha$ respects the filtration and so when applied to infinite sums $\alpha\left(\sum_{n=0}^\infty a_n\phi_n\right)=\sum_{n=0}^\infty a_nX_n$ is well-defined (each entry in the matrix is a finite sum).

We have $\Ker\alpha=\left\{\sum_{n=0}^\infty a_n\phi_n:a_n=0\text{ for all }n\right\}=0$, so $\alpha$ is injective.
Let $S=\left\{\sum_{n=0}^N a_n R^n:a_n\in\Z_{(p)},N\in\N_0\right\}$. It is clear that $S\subseteq\im(\alpha)$. Because $\alpha$ is continuous and $\tilde{U}_\infty\Z_{(p)}$ is complete it follows that the completion of $S$ is precisely the image of $\alpha$. 
\end{proof}

Similar descriptions can be given for ${ku_{(p)}}^0(ku_{(p)})$
and $ko_{(2)}^0(ko_{(2)})$.

\section{Appendix: The $q$-Binomial Theorem}
\label{appendix}

The $q$-binomial coefficients, also known as Gaussian polynomials, arise in many diverse areas of mathematics. 
Perhaps the nicest way to define them is as the coefficients arising in the
following version of the
$q$-binomial theorem. If $X$ and $Y$ are variables which $q$-commute,
that is, $YX=qXY$, then for $n\in\N_0$ we have
    \begin{equation}\label{expand}
        (X+Y)^n=\sum_{i=0}^n{n\brack i}_q X^iY^{n-i},
    \end{equation}
where ${n\brack i}_q$ is a $q$-binomial coefficient.
This version of the $q$-binomial theorem goes back to~\cite{Schutzenberger}.
(Various other results also go under the name of $q$-binomial theorem.)

The above point of view has several nice features. It makes evident the relationship with the
ordinary binomial coefficients and that the
coefficients ${n\brack i}_q$ are indeed integer polynomials in $q$. The 
two standard recurrences for $q$-binomial coefficients are easily read off, by writing
$(X+Y)^n$ as $(X+Y)(X+Y)^{n-1}$ and as $(X+Y)^{n-1}(X+Y)$.

On the other hand, if one starts from the closed formula
for the $q$-binomial coefficients:
    $$
    {n\brack i}_q=\prod_{j=0}^{i-1}\frac{1-q^{n-j}}{1-q^{i-j}}\qquad\text{where $n,i\in\N_0$},
    $$ 
then it is easy to deduce the standard recurrences and~(\ref{expand}) can be readily proved
via induction and either one of them.


\begin{thebibliography}{99}

    \bibitem{AdamsSH}
    J. F. Adams, 
    \emph{Stable homotopy and generalised homology},
    Chicago Lectures in Mathematics,
    University of Chicago Press, Chicago, IL, 1995.
    Reprint of the 1974 original.

    \bibitem{adams69}
     J. F. Adams,
    \emph{Lectures on generalised cohomology}. 
    1969 Category Theory, Homology Theory and their Applications, III 
    (Battelle Institute Conference, Seattle, Wash., 1968, Vol. Three) pp. 1–138 Springer, Berlin.

     \bibitem{ac}
     J. F. Adams and F. W. Clarke.
     \emph{Stable operations on complex $K$-theory}.
    Illinois J. Math. \textbf{21} (1977), 826--829.

    \bibitem{br}
    A. J. Baker and B. Richter,
    \emph{On the cooperation algebra of the connective Adams summand},
    Tbil. Math. J.  \textbf{1}  (2008), 33--70.



    \bibitem{BaSn}
    J. Barker and V. Snaith, 
    \emph{{$\psi\sp 3$} as an upper triangular matrix},
    $K$-Theory
    \textbf{36} (2005), 91--114 (2006).

    \bibitem{cdgm}
    F. R. Cohen, D. M. Davis, P. G. Goerss and M. E. Mahowald,  
    \emph{Integral Brown-Gitler spectra}.
    Proc. Amer. Math. Soc.  \textbf{103}  (1988),  
    no. 4, 1299--1304.

    \bibitem{ccw}
    F. Clarke, M. D. Crossley and S. Whitehouse.
    \emph{Bases for cooperations in $K$-Theory}.
    $K$-theory \textbf{23} (2001), 237--250.

     \bibitem{CCW2}
    F. Clarke, M. Crossley and S. Whitehouse.
    \emph{Algebras of operations in $K$-theory}. Topology
    \textbf{44} (2005), 151--174.

    \bibitem{ccw3}
    F. Clarke, M. Crossley and S. Whitehouse.
    \emph{The discrete module category for the ring of $K$-theory operations}. 
    Topology \textbf{46} (2007), no. 2, 139--154.

    \bibitem{Lellmann}
    W. Lellmann.
    \emph{Operations and co-operations in odd-primary connective {$K$}-theory}.
     J. London Math. Soc. (2)
     \textbf{29} (1984),
     no. 3, 562--576.

    \bibitem{Kane}
    R. M. Kane, 
    \emph{Operations in connective {$K$}-theory},
    Mem. Amer. Math. Soc.
    \textbf{34} (1981).

    \bibitem{Knapp}
    K. Knapp. 
    \emph{The homology of the connective $K$K-theory spectrum as an $A(1)$-module},
    J. Pure Appl. Algebra \textbf{101} (1995),  no. 2, 171--189.

    \bibitem{Milgram}
    R. J. Milgram.
     \emph{The Steenrod algebra and its dual for connective
              {$K$}-theory},
    Conference on homotopy theory ({E}vanston, {I}ll., 1974),
    Notas Mat. Simpos.,
    \textbf{1} (1975),
     127--158.
     
    \bibitem{Schutzenberger}
    M. P. Sch\"utzenberger,
    \emph{Une interpr\'etation de certaines solutions de l'\'equation fonctionnelle: $F(x+y)=F(x)F(y)$.}
    C. R. Acad. Sci. Paris \textbf{236} (1953), 352-–353. 

     \bibitem{Snaith-utt}
     V. P. Snaith, 
     \emph{The upper triangular group and operations in algebraic $K$-theory}.
     Topology  \textbf{41}  (2002),  
     no. 6, 1259--1275.

    \bibitem{Vic'sBook}
    V. P. Snaith,
    \emph{Stable homotopy around the {A}rf-{K}ervaire invariant},
    Progress in Mathematics,
    \textbf{273},
    Birkh\"auser Verlag,
    Basel, 2009.

    \bibitem{stanley}
    L. A. Stanley,
    \emph{Upper triangular matrices and operations in odd primary connective $K$-theory}.
    Ph.D. thesis, School of Mathematics and Statistics,
    University of Sheffield, 2011.

  
\end{thebibliography}
\end{document}